\newtheorem{theorem}{Theorem}
\newtheorem{lemma}{Lemma}
\numberwithin{lemma}{section}
\newtheorem{prop}{Proposition}
\numberwithin{prop}{section}
\newtheorem{conj}{Conjecture}
\numberwithin{equation}{section}
\newcommand{\Aut}{\operatorname{Aut}}
\newcommand{\Out}{\operatorname{Out}}
\newcommand{\Outdiag}{\operatorname{Outdiag}}
\begin{document}

\title[On groups isospectral to simple groups]{On the structure of finite\\ groups isospectral to finite simple groups}

\author{Mariya A. Grechkoseeva}
\address{Sobolev Institute of Mathematics and Novosibirsk State University,\\ Ac. Koptyuga 4, Novosibirsk, 630090, Russia.}
\email{gma@math.nsc.ru}

\author{Andrey V. Vasil$'$ev}
\address{Sobolev Institute of Mathematics and Novosibirsk State University,\\ Ac. Koptyuga 4, Novosibirsk, 630090, Russia.}
\email{vasand@math.nsc.ru}

\thanks{The work is supported by the Russian Foundation for Basic Research (project 13-01-00505).}

\begin{abstract}
Finite groups are said to be isospectral if they have the same sets of element orders. A finite nonabelian simple group $L$ is said to be almost recognizable by
spectrum if every finite group isospectral to $L$ is an almost simple group with socle isomorphic to $L$. It is known that all finite simple sporadic,
alternating and exceptional groups of Lie type, except $J_2$, $A_6$, $A_{10}$ and $^3D_4(2)$, are almost recognizable by spectrum. The present paper is the final step in the proof
of the following conjecture due to V.D. Mazurov: there exists a positive integer $d_0$ such that every finite simple classical group of dimension larger than $d_0$ is almost recognizable by spectrum.
Namely, we prove that a nonabelian composition factor of a~finite group isospectral to a finite simple symplectic or orthogonal group $L$ of dimension at least 10,
is either isomorphic to $L$ or not a group of Lie type in the same characteristic as $L$, and combining this result with earlier work, we deduce that Mazurov's conjecture holds with $d_0=60$.

\smallskip
\noindent \textbf{Keywords.} Simple group, symplectic group, orthogonal group, element orders, spectrum of a group.

\smallskip
\noindent \textbf{2010 MSC.} 20D06, 20D60.
\end{abstract}

\maketitle

\section{Introduction}

The {\em spectrum} $\omega(G)$ of a finite group $G$ is the set of element orders of $G$. Groups are {\it isospectral} if they have the same spectra. Given a finite group $G$ with nontrivial soluble radical, one can construct infinitely many different finite groups isospectral to $G$ \cite{94Shi,98Maz.t}. In contrast, there is a conjecture due to V.D. Mazurov that in general the set of groups isospectral to a finite nonabelian simple group $L$ is finite and consists of groups closely related to $L$. More precisely, Mazurov conjectured\footnote{in his talk on International Algebraic Conference in St. Petersburg, September 24–29, 2007.} that if $L$ is an alternating group of sufficiently large degree or a simple group of Lie type of sufficiently large Lie rank, and $G$ is a group isospectral to $L$, then $G$ is an almost simple group with socle isomorphic to $L$. For brevity we refer to a nonabelian simple group $L$ such that every finite group isospectral to $L$ is an almost simple group with socle isomorphic to $L$, as {\em almost recognizable by spectrum}.

Gorshkov \cite{13Gor.t} proved that all alternating groups of degree at least 11 are almost recognizable by spectrum, and recently Vasil'ev and Staroletov \cite{14VasSt.t} completed the investigation of almost recognizability of exceptional groups. Thus Mazurov's conjecture has been reduced to the following: there exists a positive integer $d_0$ such that every simple classical group of dimension larger than $d_0$ is almost recognizable. The main purpose of the present paper is to prove this conjecture with $d_0=60$. As a result, we establish the following theorem (our notation for nonabelian simple groups follows \cite{85Atlas}).

\begin{theorem}\label{t:main} Let $L$ be one of the following nonabelian simple groups:
\begin{enumerate}
\item \label{t:sporadic} a sporadic group other than $J_2$;
\item \label{t:alternating} an alternating group $A_n$, where $n\neq6,10$;
\item \label{t:exceptional} an exceptional group of Lie type other than ${}^3D_4(2)$;
\item\label{t:linear} $L_n(q)$, where $n\geqslant45$ or $q$ is even;
\item\label{t:unitary} $U_n(q)$, where $n\geqslant45$, or $q$ is even and $(n,q)\neq (4,2),(5,2)$;
\item\label{t:BnCn} $S_{2n}(q),O_{2n+1}(q)$, where either $q$ is odd and $n\geqslant28$, or $q$ is even and $n\geqslant20$;
\item\label{t:Dn} $O^+_{2n}(q)$, where either $q$ is odd and $n\geqslant31$, or $q$ is even and $n\geqslant20$;
\item\label{t:2Dn} $O^-_{2n}(q)$, where either $q$ is odd and $n\geqslant30$, or $q$ is even and $n\geqslant20$.
\end{enumerate}
Then every finite group isospectral to $L$ is isomorphic to some group $G$ with $L\leqslant G\leqslant \Aut L$. In particular,
there are only finitely many pairwise nonisomorphic finite groups isospectral to $L$.
\end{theorem}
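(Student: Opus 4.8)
The plan is to deduce Theorem~\ref{t:main} by assembling three kinds of input: the recognizability theorems already available for the non-classical series and for the linear and unitary groups, the reduction theorems that, for a classical group of large dimension, restrict the possible nonabelian composition factors of an isospectral group, and the main result announced above about composition factors of Lie type in the defining characteristic. Parts~\eqref{t:sporadic}, \eqref{t:alternating} and~\eqref{t:exceptional} require no fresh argument: the sporadic case (with $J_2$ excluded) is classical, Gorshkov's theorem~\cite{13Gor.t} settles the alternating groups, and the theorem of Vasil'ev and Staroletov~\cite{14VasSt.t} settles the exceptional groups; parts~\eqref{t:linear} and~\eqref{t:unitary} are covered by the earlier work on linear and unitary groups. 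So the task reduces to the symplectic and orthogonal cases~\eqref{t:BnCn}--\eqref{t:2Dn}.

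Fix such an $L$, over a field of characteristic $p$ and size $q$, with dimension as allowed in~\eqref{t:BnCn}--\eqref{t:2Dn}, and let $G$ be a finite group with $\omega(G)=\omega(L)$. I would first carry out the standard structural reduction. For $L$ of this dimension the prime graph of $L$ has an independent set of size at least three, and also an independent set of size at least two containing the prime $2$ (this last being where even $q$ has to be handled on its own footing); a finite group whose prime graph has these two properties possesses a normal series $1\trianglelefteq K\trianglelefteq M\trianglelefteq G$ in which $K$ is nilpotent, $S=M/K$ is a nonabelian simple group, $G/M$ is isomorphic to a subgroup of $\Out S$, and every prime in a prescribed large component of the prime graph of $L$ divides $|S|$. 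The next step is to identify $S$. The reduction theorems for classical groups already in the literature rule out that $S$ is alternating, sporadic, or of Lie type in a characteristic different from $p$: a large prime graph component of $L$ forces $|S|$ to be divisible by primitive prime divisors of $q^{k}-1$ for values of $k$ comparable with the dimension of $L$, which is incompatible with the orders and prime graphs of the other simple groups in the relevant range. The only remaining possibility besides $S\cong L$ is that $S$ is of Lie type in characteristic $p$ with $S\not\cong L$ --- and this is precisely what the main theorem of the present paper excludes, its hypothesis that the dimension be at least $10$ being satisfied since the dimension of $S$ inherits a lower bound from that of $L$. Hence $S\cong L$.

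It remains to show that $K=1$, for then $S=M\trianglelefteq G$ with $G/S$ isomorphic to a subgroup of $\Out S$, and together with $S\cong L$ this gives $L\leqslant G\leqslant\Aut L$ as required. Suppose $K\neq 1$, let $r$ be a prime dividing $|K|$, and choose a chief factor $V$ of $G$ inside $K$: then $V$ is a faithful irreducible $\mathbb{F}_rH$-module for a group $H$ with $H/Z(H)$ almost simple with socle $L$. Analysing the possibilities for $V$ according to whether $r=p$ or $r\neq p$, and invoking the known lower bounds for the dimensions of faithful $\mathbb{F}_r$-modules of $L$ and of its covering groups, one finds in $L$ elements $x$ of prime-power order, lying in components of the prime graph of $L$ other than the one containing $r$, that have no nonzero fixed point on $V$; each such $x$ gives $r\,|x|\in\omega(G)=\omega(L)$, so $r$ is joined to $|x|$ in the prime graph of $L$. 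Ranging over enough elements $x$ makes $r$ adjacent to an entire independent set of the prime graph, which is absurd. Therefore $K=1$. Collecting the dimension thresholds used in these steps, together with those required in parts~\eqref{t:linear} and~\eqref{t:unitary}, yields $d_0=60$.

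The step I expect to be the real obstacle is the identification of $S$ under the comparatively modest dimension bounds that make $d_0$ as small as $60$: excluding every cross-characteristic Lie-type competitor as well as the alternating and sporadic ones demands precise information about which primitive prime divisors lie in the large component of the prime graph of $L$ and a careful, family-by-family treatment of the borderline dimensions, and it is also there that one must verify that the hypotheses of the new theorem are in force after the reduction. The representation-theoretic estimates needed to kill the radical, although numerous, are by contrast routine once organised by Lie type, parity of $q$, and cross- versus defining characteristic.
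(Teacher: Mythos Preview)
Your overall architecture matches the paper's exactly: cite existing work for (i)--(v); for (vi)--(viii) use a structural reduction to pin down a unique nonabelian composition factor $S$, exclude all possibilities for $S$ except $S\cong L$, and then show the soluble radical is trivial. The paper, however, does not re-argue the latter two steps but quotes three specific results: \cite[Theorem~2 and Proposition~6]{15Vas} already shows that $S$ must be a \emph{symplectic or orthogonal} group in characteristic $p$ (so the cross-characteristic, alternating, sporadic, and same-characteristic linear/unitary/exceptional eliminations are not redone here); Theorem~\ref{t:quasi} of the present paper then forces $S\cong L$ because the small-$n$ exceptions listed there lie below the thresholds in (vi)--(viii); and finally \cite[Theorem~1.1]{15Gr} gives $K=1$ once $S\cong L$. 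Note also that the hypothesis of Theorem~\ref{t:quasi} constrains the dimension of $L$, not of $S$, so your remark that ``the dimension of $S$ inherits a lower bound from that of $L$'' is beside the point.

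Two technical corrections. First, the reduction you invoke yields that $K$ is the \emph{soluble radical} of $G$ (Lemma~\ref{l:structure} here), not in general a nilpotent group; the nilpotent conclusion comes from the disconnected-prime-graph setting of Gruenberg--Kegel, which is not the hypothesis in force. Second, your sketch for $K=1$ has a real gap: from lower bounds on $\dim_{\mathbb{F}_r}V$ one cannot conclude that a chosen element $x\in L$ acts without nonzero fixed points on an \emph{arbitrary} faithful module $V$, so the inference $r\,|x|\in\omega(G)$ is unjustified as stated. The arguments actually used (see \cite{15Gr} and Lemmas~\ref{l:reduction}, \ref{l:action}, \ref{l:hh} here) proceed instead via Frobenius subgroups of $S$ acting on the chief factor, or via Hall--Higman style considerations in the $r=p$ case, and it is those devices---not module-dimension bounds---that produce the forbidden element orders.
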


Theorem~\ref{t:main} is undoubtedly a sum of efforts by numerous mathematicians, and a com\-prehen\-sive list of references covering its proof is too long to be given here.
But for every series of simple  groups mentioned in Theorem~\ref{t:main}, we cite the work in which the proof for this series was completed, and as a rule this work includes a survey of previous investigations.
Thus, see already mentioned \cite{13Gor.t} and \cite{14VasSt.t} for alternating and exceptional groups. See \cite{98MazShi} for sporadic groups, \cite[Theorem~1]{15Vas} for linear and unitary groups in odd characteristic, and \cite{08VasGr.t} and  \cite[Corollary 2]{11Gr1.t} for linear and unitary groups in characteristic 2. The present paper is concerned with symplectic and orthogonal groups.

By \cite[Theorem~2 and Proposition 6]{15Vas}, if $L$ is a finite simple symplectic or orthogonal group as in (\ref{t:BnCn})--(\ref{t:2Dn}) of Theorem~\ref{t:main} and $G$ is a finite group with $\omega(G)=\omega(L)$, then
$G$ has only one nonabelian composition factor and this factor $S$ is a symplectic or orthogonal group having the same underlying characteristic as $L$. If $S\simeq L$, then
the soluble radical of $G$ is trivial \cite[Theorem~1.1]{15Gr}, and hence $G$ is an almost simple group with socle isomorphic to $L$, as required. If $S\not\simeq L$, then by \cite[Theorem 3]{09VasGrMaz.t}
there are at most two possibilities for $S$ (see Lemma~\ref{l:previous} below). We eliminate these possibilities by the following theorem, and thus Theorem~\ref{t:main} follows.

\begin{theorem} \label{t:quasi}
Let $q$ be a power of a prime $p$, $L$ one of the groups $S_{2n}(q)$, where $n\geqslant 2$ and
$(n,q)\not\in\{(2,2),(2,3)\}$, $O_{2n+1}(q)$, where $n\geqslant3$, or $O^\pm_{2n}(q)$, where $n\geqslant 4$, and let $G$ be a finite group with $\omega(G)=\omega(L)$.  Suppose that some nonabelian composition factor $S$ of $G$ is a group of Lie type
over a field of characteristic $p$. Then either $S\simeq L$ or  one of the following holds:
\begin{enumerate}
\item \label{i:s4} $L=S_4(q)$, where $q\neq3^{2k+1}$, and $S=L_2(q^2)$;

\item \label{i:s8} $L\in\{O_9(q), S_8(q)\}$ and $S=O^-_8(q)$;

\item \label{i:s6} $\{L,G\}=\{O^+_8(2), S_6(2)\}$;

\item \label{i:o7} $\{L,G\}=\{O^+_8(3), O_7(3)\}$.
\end{enumerate}

\end{theorem}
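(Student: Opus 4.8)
The plan is to compare the spectrum of $L$, and in particular those element orders which are powers of $p$ together with the structure of maximal tori, against the spectrum of $S$, exploiting that $S$ is a group of Lie type in the \emph{same} characteristic $p$. The starting point is the well-known fact that for a simple group of Lie type $X$ over $\mathbb{F}_{p^f}$, the exponent of a Sylow $p$-subgroup of $X$ equals $p^{\lceil \log_p h\rceil}$, where $h$ is a suitable parameter (the Coxeter number, or more precisely the largest size of a Jordan block in the natural or adjoint module); thus $\omega(G)=\omega(L)$ forces the $p$-parts of $\omega(S)$ and $\omega(L)$ to coincide, which already pins down the Lie rank of $S$ up to a bounded ambiguity once the characteristic is fixed. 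First I would record, for $L$ among $S_{2n}(q)$, $O_{2n+1}(q)$, $O^{\pm}_{2n}(q)$, the precise value $p^{e(L)}$ of this $p$-exponent (a function of $n$ and $p$ only, with small exceptions when $p$ is small), and likewise for the candidate groups $S$.

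Next I would use the semisimple part of $\omega(L)$: the set of orders of elements of order prime to $p$, which is governed by the orders of maximal tori, i.e.\ by the cyclotomic factorization of the order polynomial of $L$. The key quantitative tool here is the largest prime (or prime power) $r$ such that $r\in\omega(L)$ but the corresponding primitive prime divisor forces a long torus; equivalently one looks at $\mathrm{lcm}$-type invariants like the two largest "independent" torus orders. Since $S$ is a section of $G$ and $\omega(S)\subseteq\omega(G)=\omega(L)$, every maximal torus order of $S$ must divide some element order of $L$; combined with the matching $p$-exponents this is an extremely rigid constraint. Carrying this out for each family, I expect that $S$ is forced to be a classical group of essentially the same type and rank as $L$ over the same field, and a short case analysis — using, for instance, that $\omega(O^-_{2n}(q))$ and $\omega(O_{2n+1}(q))$ differ in a controlled way, and that $L_2(q^2)\subseteq S_4(q)$-type embeddings produce genuine coincidences — leaves exactly the sporadic possibilities listed in (\ref{i:s4})--(\ref{i:o7}). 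The cases where $\{L,G\}$ itself is specified, namely $\{O^+_8(2),S_6(2)\}$ and $\{O^+_8(3),O_7(3)\}$, should come out of the triality-related spectral coincidences in dimension $8$ and can be checked directly.

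The main obstacle, I expect, is not the generic argument but the low-rank and small-characteristic boundary cases, where the $p$-exponent invariant loses resolution and several unrelated groups of Lie type can share the same Sylow $p$-exponent and overlapping semisimple spectra. In particular separating $S_8(q)$/$O_9(q)$ from $O^-_8(q)$ (which is genuinely isospectral-compatible, hence appears in the conclusion) from $O^+_8(q)$ and from exceptional groups like $G_2(q)$, $F_4(q)$, ${}^3D_4(q)$ requires precise knowledge of which cyclotomic polynomials $\Phi_i(q)$ divide the respective group orders and which products $\Phi_i(q)\Phi_j(q)$ can occur as element orders. I would handle this by assembling, once and for all, the list of torus structures for all groups of Lie type of bounded rank over $\mathbb{F}_q$ and intersecting it with the known description of $\omega(L)$ for the classical $L$ in the statement (available from the cited work of Vasil'ev–Grechkoseeva–Mazurov), reducing everything to a finite, if tedious, verification; the genuine exceptions surviving this verification are precisely items (\ref{i:s4})--(\ref{i:o7}).
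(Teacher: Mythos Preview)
Your proposal has a genuine gap: the only constraint you impose on $S$ is $\omega(S)\subseteq\omega(L)$ (plus the matching of $p$-exponents, which is a special case of this), and that is not enough to reach the conclusion. In several of the cases that actually have to be eliminated, the containment $\omega(S)\subseteq\omega(L)$ holds \emph{trivially} because $S$ embeds in $L$. For example, if $L\in\{S_{2n}(q),O_{2n+1}(q)\}$ with $n\geqslant 5$ and $S=O^-_{2n}(q)$, then $O^-_{2n}(q)<O_{2n+1}(q)$ gives $\omega(S)\subseteq\omega(L)$; likewise if $L=O^+_{2n}(q)$ with $n\geqslant 6$ even and $S\in\{S_{2n-2}(q),O_{2n-1}(q)\}$, then $O_{2n-1}(q)<O^+_{2n}(q)$. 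No amount of torus bookkeeping or $p$-exponent matching will separate these, because the inclusion of spectra is exact. Your ``finite verification'' will simply output these configurations as survivors, and you have said nothing about how to kill them.

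What the paper does at this point is to use the full structure of $G$, not just of $S$. One knows $S\leqslant G/K\leqslant \Aut S$ with $K$ the soluble radical, and the mismatch $\omega(L)\not\subseteq\omega(S)$ (rather than the reverse containment) forces certain primes into $\pi(K)$, since they cannot be accounted for by $\Out S$. One then produces a Frobenius subgroup of $S$ with $p$-kernel and carefully chosen complement, and shows via a Hall--Higman/Frattini type argument on the action of $G/K$ on $K$ that either $p$ becomes adjacent in $GK(G)$ to a prime it should not be adjacent to in $GK(L)$, or some other forbidden element order appears. This step---exploiting the soluble radical and cocliques in the prime graph---is the heart of the argument once the list of candidates for $S$ has been narrowed, and it is entirely absent from your plan. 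Your torus/exponent matching is essentially what the cited prior work (Lemma~\ref{l:previous} here) already does to get down to the short list; the new content of the theorem is precisely the elimination of the residual cases where $\omega(S)\subseteq\omega(L)$ is automatic.
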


We conclude the introduction with a discussion of nonabelian simple groups that are genuinely not almost recognizable by spectrum. The groups $J_2$, $A_6$, $A_{10}$, $^3D_4(2)$, $U_4(2)\simeq S_4(3)$, and $U_5(2)$ are not
almost recognizable by spectrum; moreover, each of them is isospectral to a group with nontrivial soluble radical (see \cite{98MazShi,91BrShi, 98Maz.t, 13Maz.t}). All the groups $L_2(q)$,  $L_3(q)$  and $U_3(q)$, except for $L_2(9)$, $L_3(3)$, $U_3(5)$ and $U_3(p)$ where $p$ is a Mersenne prime and $p^2-p+1$ is a prime too, are almost recognizable (see \cite{94BrShi}, \cite{00MazXuCao.t,04Zav} and \cite{00MazXuCao.t,06Zav.t} respectively). The question is still open for $L_4(q)$ and $U_4(q)$ with $q$ odd: the group $L_4(13^{24})$ was proved to be not almost recognizable, and it was conjectured that there are infinitely many $q$ such that $L_4(q)$ is not almost recognizable \cite{08Zav2}. Thus, the exceptions in (\ref{t:sporadic})--(\ref{t:exceptional}) of Theorem \ref{t:main} are necessary, while the condition $n\geqslant 45$ in (\ref{t:linear}) and (\ref{t:unitary}) can probably be replaced by $n\geqslant 5$.

The simple group $S_4(q)$ is almost recognizable if and only if $q=3^m$ with $m>1$ odd; moreover, the case (\ref{i:s4}) of Theorem \ref{t:quasi} is possible \cite{00MazXuCao.t,02Maz.t}.
It is known that $\omega(O^+_8(2))=\omega(S_6(2))$ and $\omega(O^+_8(3))=\omega(O_7(3))$ and there are no other finite groups with such spectra \cite{97Maz.t, 97ShiTan}. The group $S_8(2)$ is not almost recognizable \cite{06MazMog}.
In the last section  we generalize this result to all groups $S_8(q)$ with $q$ even, showing the case (\ref{i:s8}) of Theorem \ref{t:quasi} is possible when $q$ is even. We suspect that it is also possible when $q$ is odd (and this is proved for $L=O_9(q)$ in \cite{14GrSt}). Thus it seems likely that all the exceptions appearing in Theorem \ref{t:quasi} can be realised. On the other hand, we believe that all other symplectic and orthogonal groups are almost recognizable by spectrum.

To summarize, we propose the following

\begin{conj}\label{conj:main} Let $L$ be one of the following groups:
\begin{enumerate}
\item $L_n(q)$, where $n\geqslant5$;
\item $U_n(q)$, where $n\geqslant5$ and $(n,q)\neq(5,2)$;
\item $S_{2n}(q)$, where $n\geqslant3$, $n\neq4$ and $(n,q)\neq(3,2)$;
\item $O_{2n+1}(q)$, where $q$ is odd, $n\geqslant3$, $n\neq4$ and $(n,q)\neq(3,3)$;
\item $O_{2n}^\varepsilon(q)$, where $n\geqslant4$ and $(n,q,\varepsilon)\neq (4,2,+),(4,3,+)$.
\end{enumerate}
Then every finite group isospectral to $L$ is isomorphic to some group $G$ with $L\leqslant G\leqslant \Aut L$.
\end{conj}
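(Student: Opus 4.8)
The plan is to fix a group $L$ from the list and an arbitrary finite group $G$ with $\omega(G)=\omega(L)$, and to reduce the desired conclusion $L\leqslant G\leqslant\Aut L$ to three successive claims: (A)~$G$ has exactly one nonabelian composition factor $S$, and $S$ is a simple group of Lie type over a field of the same characteristic $p$ as $L$; (B)~$S\simeq L$; and (C)~the soluble radical of $G$ is trivial. Granting (A)--(C), the quotient of $G$ by its soluble radical is an almost simple group with socle $S\simeq L$, and triviality of the radical upgrades this to $L\leqslant G\leqslant\Aut L$. For the symplectic and orthogonal series this scheme is especially attractive, since once (A) is in hand, claim (B) is supplied verbatim by Theorem~\ref{t:quasi}: its list of exceptions $S_4(q)$, $\{O_9(q),S_8(q)\}$, $O^+_8(2)$ and $O^+_8(3)$ matches exactly the cases that the exclusions $n\neq4$ and the small pairs $(n,q)$ (such as $(3,2)$, $(3,3)$, $(4,2,+)$, $(4,3,+)$) in Conjecture~\ref{conj:main} are designed to avoid. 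Thus the genuine content of the conjecture is concentrated in claims (A) and (C), together with the linear and unitary analogues of (B).

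To establish (A), I would work with the Gruenberg--Kegel (prime) graph $\Gamma(L)$, whose vertices are the primes dividing $|L|$ and whose edges join two primes whose product lies in $\omega(L)$. Using Zsigmondy's theorem on primitive prime divisors of $q^i-1$ and $q^i+1$, one produces in $\Gamma(L)$ a large independent set of primes coming from distinct maximal tori; feeding this into the structural classification of finite groups with a sparse prime graph (as already exploited for large dimension via \cite{15Vas}) forces $G$ to have a unique nonabelian composition factor $S$ and places the quotient of $G$ by its soluble radical between the inner and the full automorphism group of $S$. One then rules out the possibilities that $S$ is alternating, sporadic, or of Lie type in a characteristic different from $p$, by matching the largest element orders of $S$ against those present in $\omega(L)$: the maximal torus orders of $L$, being products of cyclotomic values $\Phi_i(q)$, pin down both the characteristic and a bounded rank for $S$. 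The point of the conjecture is to carry this reduction below the thresholds of \cite{15Vas}, into the range $5\leqslant n<45$ for linear and unitary groups and the correspondingly small ranks for the remaining series.

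Claim (C) would be obtained by extending the radical-triviality argument of \cite[Theorem~1.1]{15Gr} to these smaller ranks. Assuming a nontrivial soluble radical, one isolates a minimal normal subgroup $V$, an elementary abelian $r$-group on which the preimage of $S$ acts nontrivially and faithfully, and argues that this action must create an element order outside $\omega(L)$. The mechanism is representation-theoretic: Hall--Higman-type estimates together with bounds on the dimensions and fixed spaces of the relevant $S$-modules yield an element of order $r\cdot t$ (or $p\cdot t$ when $r=p$) exceeding the ``gaps'' in the spectrum of a classical group, whose maximal element orders are tightly controlled by the arithmetic of the values $\Phi_i(q)$. Combined with (A) and with claim (B)---delivered by Theorem~\ref{t:quasi} in the symplectic and orthogonal cases and by its linear and unitary counterpart (along the lines of \cite{09VasGrMaz.t})---this gives that $G$ is almost simple with socle $L$ outside the excluded cases.

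The main obstacle is the low-rank regime itself. For the ranks just below the present thresholds the graph $\Gamma(L)$ has few vertices; primitive prime divisors may be absent in the exceptional Bang--Zsigmondy cases, the clean independent-set arguments underlying step (A) degenerate, and cross-characteristic coincidences of spectra proliferate. It is precisely here that the genuine counterexamples live---the non-recognizable groups $S_8(q)$, the coincidence $\omega(O^+_8(2))=\omega(S_6(2))$, and $\omega(O^+_8(3))=\omega(O_7(3))$---so the task is not merely to push the estimates down but to prove that these are the only obstructions. Establishing claims (A) and (C), and the linear and unitary form of (B), uniformly for all $n\geqslant5$ (respectively $n\geqslant3$) therefore requires replacing the asymptotic, Zsigmondy-driven reductions by a delicate finite case analysis of the remaining ranks, and it is this analysis---rather than any single hard lemma---that keeps Conjecture~\ref{conj:main} open beyond the bound $d_0=60$ proved here.
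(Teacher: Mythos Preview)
The statement is a \emph{conjecture}; the paper does not prove it, and neither do you---your final paragraph correctly concludes that the problem remains open. So there is no ``paper's own proof'' to compare against, only the paper's discussion of what is known and what is missing. As a strategy, your decomposition into (A) a unique nonabelian composition factor $S$ of Lie type in characteristic $p$, (B) $S\simeq L$, and (C) trivial soluble radical, is exactly the scheme the paper uses, and your observation that the exclusions in the conjecture's list match precisely the exceptional cases of Theorem~\ref{t:quasi} is correct and well put.

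Where your account diverges from the paper is in locating the open part. You write that (C) ``would be obtained by extending the radical-triviality argument of \cite[Theorem~1.1]{15Gr} to these smaller ranks'', and that the linear and unitary form of (B) still needs to be established; you then describe (A), (B) and (C) collectively as the ``genuine content''. The paper is considerably sharper: in the paragraph immediately following the conjecture it asserts that to prove Conjecture~\ref{conj:main} it suffices to show that the nonabelian composition factor $S$ is not a group of Lie type in a characteristic different from that of $L$. In other words, the structural part of (A) is already given by Lemma~\ref{l:structure}; the same-characteristic identification (B) is already in hand for every group on the list (Theorem~\ref{t:quasi} in the symplectic and orthogonal cases, and its linear and unitary analogues, cf.\ \cite{09VasGrMaz.t,15Vas}); and (C) is supplied by \cite[Theorem~1.1]{15Gr} in the needed generality, not only above some dimension threshold. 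The single missing ingredient is the cross-characteristic exclusion below the bounds of \cite{15Vas}. Your sketch is therefore sound in outline but misallocates the remaining work: steps (B) and (C) are done, and the conjecture stands or falls on the cross-characteristic case of (A) alone.
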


In fact, to prove Conjecture~\ref{conj:main} it suffices to show that the only nonabelian composition factor of a group isospectral to a simple classical group $L$ under consideration is not a group of Lie type whose underlying characteristic differs from that of $L$. The existing generic proof of this assertion recently obtained in \cite{15Vas} requires that the dimension of $L$ is as large as stated in Theorem~\ref{t:main}. It is worth also mentioning
that the assertion was proved in some special cases, in particular for many classical groups with disconnected prime graph (see \cite{09VasGorGr.t, 09AleKon.t, 09Kon, 09HeShi,10SheShiZin.t,12HeShi, 12GrLyt.t, 12ForIraAha, 13ForIraAha} for recent research in this area).

\section{Preliminaries}

By $(a_1,a_2,\dots,a_k)$ and $[a_1,a_2,\dots,a_k]$ we denote respectively the greatest common divisor and least common multiple of
positive integers $a_1,a_2,\dots,a_k$. If $a$ is a positive integer and $r$ is a prime then $\pi(a)$ denotes the set of prime divisors of $a$ and $(a)_r$ denotes the highest power of $r$ that divides $a$.

\begin{lemma}[Zsigmondy \cite{Zs}]\label{l:zsig}
Let $q\geqslant 2$ and $n\geqslant 3$ be integers with $(q,n)\neq (2,6)$. There exists
a prime $r$ such that $r$ divides $q^n-1$ but does not divide $q^i-1$ for $i<n$.
\end{lemma}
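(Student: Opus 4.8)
The plan is to prove Zsigmondy's theorem by a counting argument based on cyclotomic polynomials. First I would recall that $q^n-1=\prod_{d\mid n}\Phi_d(q)$, where $\Phi_d$ is the $d$-th cyclotomic polynomial, and that a prime $r$ with the required property (a so-called \emph{primitive prime divisor}) is exactly a prime dividing $q^n-1$ whose multiplicative order modulo $r$ is precisely $n$. The key observation is that if $r\mid q^n-1$ but $r$ is not primitive, then $r$ divides $q^i-1$ for some proper divisor $i$ of $n$, and one can show that in this case $r$ must divide $n$; more precisely, the only prime that can divide $\Phi_n(q)$ without being primitive is the largest prime factor $p$ of $n$, and it then divides $\Phi_n(q)$ to at most the first power. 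So the strategy is to estimate $\Phi_n(q)$ from below and show it exceeds the contribution of any non-primitive prime, forcing the existence of a primitive one.

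Next I would carry out the lower bound on $\Phi_n(q)$. Writing $\Phi_n(q)=\prod_{\zeta}(q-\zeta)$ over the primitive $n$-th roots of unity $\zeta$, each factor satisfies $|q-\zeta|\geqslant q-1$, which gives the crude bound $\Phi_n(q)\geqslant (q-1)^{\varphi(n)}$. A sharper bound, obtained by pairing conjugate roots $\zeta,\bar\zeta$ so that $(q-\zeta)(q-\bar\zeta)=q^2-2q\cos\theta+1\geqslant (q-1)^2$ with strict inequality away from $\zeta=1$, yields $\Phi_n(q)>(q-1)^{\varphi(n)}$ for $n\geqslant 2$. The heart of the argument is then the arithmetic lemma: if $p$ is the largest prime divisor of $n$, then $p\nmid \Phi_n(q)$ unless $n=p\cdot m$ with every prime factor of $m$ also dividing $q^{n/p}-1$ in a controlled way, and in the exceptional case $(\Phi_n(q))_p=p$. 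Combining these, a non-primitive prime contributes at most a factor of $p\leqslant n$ to $\Phi_n(q)$, so it suffices to show $(q-1)^{\varphi(n)}>n$, i.e. $\Phi_n(q)$ cannot be a power of the single admissible non-primitive prime.

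I would then reduce to verifying the inequality $(q-1)^{\varphi(n)}>n$, or rather the refined statement that $\Phi_n(q)$ has a prime factor other than the possible non-primitive one. For $q\geqslant 3$ the bound $(q-1)^{\varphi(n)}\geqslant 2^{\varphi(n)}$ already beats $n$ in all but finitely many cases, and for $q=2$ one uses $\Phi_n(2)>n$ for $n\geqslant 3$ with the single genuine exception $n=6$, where $\Phi_6(2)=3$ and the only prime divisor $3$ already divides $2^2-1$; this is precisely the reason for excluding $(q,n)=(2,6)$ in the hypothesis. The finitely many small pairs $(q,n)$ not covered by the generic inequality would be checked by direct computation, confirming in each case that a primitive prime exists.

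The hard part will be the arithmetic lemma controlling how a non-primitive prime $r$ can divide $\Phi_n(q)$: one must show that such an $r$ divides $n$ and appears to the first power only, which requires an analysis of the $r$-adic valuation of $q^n-1$ via the lifting-the-exponent phenomenon, namely that if $r\mid q^d-1$ with $d$ the order of $q$ modulo $r$ and $d\mid n$, then $(q^n-1)_r=(q^d-1)_r\cdot(n/d)_r$. Disentangling the factor $(n/d)_r$ and matching it against the cyclotomic factorization, so as to conclude that the entire non-primitive contribution to $\Phi_n(q)$ is bounded by $n$ (indeed by a single prime $\leqslant n$ to the first power), is the delicate point on which the whole estimate turns.
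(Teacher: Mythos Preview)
Your outline is the standard cyclotomic-polynomial proof of Zsigmondy's theorem and is essentially correct: the key arithmetic lemma (that a non-primitive prime $r$ dividing $\Phi_n(q)$ must be the largest prime factor of $n$ and occurs in $\Phi_n(q)$ to the first power only) is right, and the lower bound on $\Phi_n(q)$ then forces a primitive divisor except in the excluded case $(q,n)=(2,6)$. One small wording issue: the crude bound $(q-1)^{\varphi(n)}$ is useless for $q=2$, as you note, so for $q=2$ you really do need the sharper product estimate $\Phi_n(2)=\prod_\zeta|2-\zeta|>1$ worked out more carefully (pairing conjugate roots gives $\Phi_n(2)>2^{\varphi(n)/2}/\text{something}$, or one simply bounds $|2-\zeta|\geqslant 1$ with enough factors strictly bigger); this is routine but should be stated rather than conflated with the $q\geqslant 3$ case.

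However, you should be aware that the paper does not prove this lemma at all: it is stated with attribution to Zsigmondy and cited to the original 1892 paper, as is customary. So there is nothing to compare your argument against; you have supplied a proof where the paper simply invokes a classical result. Your approach is the textbook one (found, e.g., in Roitman's or Birkhoff--Vandiver's treatments), and it is entirely appropriate if an independent proof is desired.
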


With notation of Lemma \ref{l:zsig}, we call a prime $r$ a {\em primitive prime divisor} of $q^n-1$ and denote it by $r_n(q)$.
By $R_n(q)$ we denote the set of all primitive prime divisors of $q^n-1$. Observe that $R_n(q)\subseteq R_n(q^k)$ if $k$ is coprime to $n$, and $R_{nk}(q)\subseteq R_n(q^k)$ for all $n$ and $k$.

Given a group $G$, we set $\pi(G)=\pi(|G|)$ and define the {\em prime graph} $GK(G)$ as follows: its vertex set is $\pi(G)$ and two different primes $r$ and $s$ are adjacent if and only if $G$ has an element of order $rs$.
We use standard graph-theoretic terminology: a {\em coclique} of a graph is a set of pairwise nonadjacent vertices; a {\em neighbourhood} of a vertex $v$ of a graph is the subgraph consisting of all vertices adjacent to $v$ and all edges connecting two such vertices.

\begin{lemma}\label{l:structure}
Let $L$ be a finite nonabelian simple group of Lie type other than $L_3(3)$, $U_3(3)$, $S_4(3)\simeq U_4(2)$ and let $G$ be a finite group with $\omega(G)=\omega(L)$.
Then the following hold.
\begin{enumerate}
\item There is a nonabelian simple group $S$ such that $S\leqslant \overline G=G/K\leqslant \Aut S,$ where $K$ is
the soluble radical of $G$.

\item If $\rho$ is a coclique of size at least $3$ in $GK(G)$, then at most one prime of $\rho$ divides
$|K|\cdot|\overline{G}/S|$.

\item If $r\in\pi(G)$ is not adjacent to $2$ in $GK(G)$, then $r$ is coprime to $|K|\cdot|\overline{G}/S|$.

\end{enumerate}
\end{lemma}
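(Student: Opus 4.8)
The plan is to derive all three assertions from a known structural theorem about finite groups whose prime graph has independence number at least $3$; the work then splits into checking the hypotheses of that theorem for $G$ and clearing away a few small exceptional configurations. For a finite group $H$, write $t(H)$ for the largest size of a coclique in $GK(H)$ and $t(2,H)$ for the largest size of a coclique of $GK(H)$ containing the vertex $2$. Since $\omega(G)=\omega(L)$ implies $GK(G)=GK(L)$, these invariants agree for $G$ and $L$.

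First I would check that $t(G)\geqslant 3$ and $t(2,G)\geqslant 2$. This follows from the classification of cocliques in prime graphs of finite simple groups due to Vasil'ev and Vdovin: every finite simple group of Lie type has a coclique of size $3$, and a coclique of size $2$ containing $2$, unless it is one of $L_3(3)$, $U_3(3)$, $S_4(3)\simeq U_4(2)$ --- precisely the groups excluded in the statement. Next I would invoke Vasil'ev's structural theorem: if a finite group $G$ satisfies $t(G)\geqslant 3$ and $t(2,G)\geqslant 2$, then $G$ has a normal series $1\leqslant K\leqslant A\leqslant G$ in which $K$ is the soluble radical of $G$, the quotient $S=A/K$ is a nonabelian simple group, and $G/A$ embeds in $\Out S$; moreover, for every coclique $\rho$ of $GK(G)$ of size at least $3$ at most one prime of $\rho$ divides $|K|\cdot|G/A|$, and every prime $r$ nonadjacent to $2$ in $GK(G)$ is coprime to $|K|\cdot|G/A|$ unless $S$ belongs to a short explicit list of small simple groups (those with $t(2,S)\leqslant 2$). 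Setting $\overline G=G/K$, so that $\overline G/S\cong G/A$, this yields (i) and (ii) at once, and it yields (iii) except when $S$ is one of the listed small groups.

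It then remains to handle these exceptional possibilities for $S$, and this is where the real work lies. When $S$ is one of the small groups (of type $A_7$ or $L_2(q)$ with $q$ small), the containment of spectra forced by $\omega(S)\subseteq\omega(G)=\omega(L)$, together with the adjacency pattern of $GK(L)$, pins $L$ down to a short list; for each resulting pair $(L,S)$ one checks directly that no prime nonadjacent to $2$ in $GK(L)$ divides $|K|\cdot|\overline G/S|$, using that $|\overline G/S|$ divides the (tiny) order of $\Out S$ and that Lemma~\ref{l:zsig} supplies element orders in $\omega(L)$ divisible by primitive prime divisors $r_n(q)$ too large to occur in such small sections. The main obstacle is thus the bookkeeping in this last step: one must identify exactly which configurations the structural theorem leaves open and, for each admissible socle $S$, compare $\omega(L)$ with the spectra of the almost simple groups having socle $S$.
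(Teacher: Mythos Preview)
Your plan rests on a false premise. You assert that $t(L)\geqslant 3$ for every finite simple group of Lie type except $L_3(3)$, $U_3(3)$ and $S_4(3)$; this is not so. For every $q\geqslant 4$ the group $S_4(q)$ has $t(L)=2$: its prime graph has exactly two connected components, $\{p\}\cup\pi(q^2-1)$ and $\pi((q^2+1)/(2,q-1))$, and each is a clique. The same phenomenon occurs for $S_6(2)$, $O_8^+(2)$ and ${}^3D_4(2)$. For all such $L$ the hypothesis $t(G)\geqslant 3$ of Vasil'ev's structural theorem fails, and your argument simply does not start. The three groups excluded in the lemma are singled out not because they exhaust the Lie type groups with small cocliques, but because, by Aleeva \cite{03Ale.t}, they are precisely the Lie type groups isospectral to a Frobenius or $2$-Frobenius group.

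This is why the paper's proof splits into two cases. When $t(L)\geqslant 3$ it proceeds essentially as you describe, invoking \cite{05Vas.t} together with \cite{09VasGor.t} and \cite[Theorem~7.1]{05VasVd.t} to obtain (i)--(iii) directly, with no exceptional list for $S$ to dispose of. When $t(L)<3$ it uses a different mechanism: the classifications in \cite{11VasVd.t,81Wil,89Kon.t} force $GK(L)$ to be disconnected, and then the Gruenberg--Kegel theorem says that either $G$ already has the structure in (i) and satisfies (iii), or $G$ is Frobenius or $2$-Frobenius; the latter is ruled out by \cite{03Ale.t} for $L$ outside the three named exceptions, and (ii) is vacuous since no coclique of size $3$ exists. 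Your proposal is missing this entire second branch, and the case analysis you sketch at the end (handling small $S$) is aimed at the wrong target.
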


\begin{proof} If there is a coclique of size 3 in $GK(L)$, then the assertion is the main theorem of \cite{05Vas.t} supplemented with \cite{09VasGor.t}  and \cite[Theorem 7.1]{05VasVd.t}.
If there are no cocliques of size 3 in $GK(L)$, then we have that $GK(L)$ is disconnected by \cite{11VasVd.t, 81Wil, 89Kon.t}.  Then
the Gruenberg--Kegel theorem \cite[Theorem A]{81Wil} implies that either (i) and (iii) holds true for $G$, or $G$ is a Frobenius or 2-Frobenius group ($G$ is called 2-Frobenius if $G=ABC$, where $A$, $AB$ are normal in $G$, $B$ is normal in $BC$, and $AB$ and $BC$ are Frobenius groups).
Simple groups of Lie type that can be isospectral to a Frobenius or 2-Frobenius group are described in \cite{03Ale.t}, and these groups are precisely  $L_3(3)$, $U_3(3)$ and $S_4(3)\simeq U_4(2)$.
\end{proof}

We say that a finite group $H$ is a (proper) {\it cover} of a finite group $G$ if there is a (nontrivial)  normal subgroup $K$ of $H$ such that $H/K\simeq G$.

\begin{lemma}[{\cite[Lemma 2.3]{15Gr}}] \label{l:reduction}
Let $A$ and $B$ be finite groups. The following are equivalent.
\begin{enumerate}
\item $\omega(H)\not\subseteq\omega(B)$ for any proper cover $H$ of $A$;
\item $\omega(H)\not\subseteq\omega(B)$ for any split extension $H=K:A$, where $K$ is a nontrivial elementary abelian group.
\end{enumerate}
\end{lemma}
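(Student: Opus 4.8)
The equivalence has one easy direction and one substantive direction. For (i) $\Rightarrow$ (ii), note that a split extension $H=K{:}A$ with $K$ a nontrivial elementary abelian group is in particular a proper cover of $A$, with kernel $K$; so applying (i) to this $H$ yields $\omega(H)\not\subseteq\omega(B)$, which is exactly (ii). The content is therefore in (ii) $\Rightarrow$ (i), which I would prove in contrapositive form: assuming that \emph{some} proper cover $H$ of $A$ satisfies $\omega(H)\subseteq\omega(B)$, I would construct a split extension $K{:}A$ with $K$ nontrivial elementary abelian and $\omega(K{:}A)\subseteq\omega(B)$, thereby negating (ii). To organise the construction I would fix a proper cover $H$ of $A$ of \emph{least order} subject to $\omega(H)\subseteq\omega(B)$, and write $K$ for a normal subgroup with $H/K\cong A$ and $K\neq1$.

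The first thing to record is the elementary but decisive observation that passing to a quotient never enlarges the spectrum: if $N\trianglelefteq H$ and $gN$ has order $n$ in $H/N$, then $n$ divides the order $m$ of $g$, so $g^{m/n}$ has order exactly $n$; hence $\omega(H/N)\subseteq\omega(H)$. This removes the only apparent obstacle to a minimal-counterexample argument. Using it, I would first show that $K$ is a minimal normal subgroup of $H$: if some minimal normal subgroup $V$ of $H$ satisfied $V\lneq K$, then $H/V$ would be a proper cover of $A$ (its kernel $K/V$ being nontrivial) with $\omega(H/V)\subseteq\omega(H)\subseteq\omega(B)$ and $|H/V|<|H|$, contradicting minimality. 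Thus $K\cong T^k$ for a simple group $T$. To exclude the case that $T$ is nonabelian I would use a Frattini argument: picking a prime $p\in\pi(T)$ and a Sylow $p$-subgroup $P$ of $K$, one has $P\ntrianglelefteq K$ (a nonabelian simple group has no normal Sylow $p$-subgroup) and hence $P\ntrianglelefteq H$, so that $N_H(P)<H$; the identity $H=K N_H(P)$ then gives $N_H(P)/N_K(P)\cong A$ with $N_K(P)\supseteq P\neq1$, so $N_H(P)$ is a proper cover of $A$ with $\omega(N_H(P))\subseteq\omega(B)$ and $|N_H(P)|<|H|$, again contradicting minimality. Hence $K$ is elementary abelian, and since $K$ is abelian the conjugation action of $H$ on $K$ factors through $A=H/K$.

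It remains to replace the possibly non-split cover $H$ by a genuine split extension, and this is where I expect the real work to lie. Let $H_s=K\rtimes A$ be the split extension afforded by the action just described; I would prove $\omega(H_s)\subseteq\omega(H)$, which finishes the proof since $H_s$ is then the required witness against (ii). Writing $K$ additively and computing powers, an element of $H_s$ lying over a coset $\bar h\in A$ of order $d$ satisfies $(v,\bar h)^{d}=(N_{\bar h}(v),1)$, where $N_{\bar h}(v)=\sum_{i=0}^{d-1}\bar h^{i}v$; as $K$ has exponent $p$, such an element has order $d$ or $dp$, and the value $dp$ occurs for some $v$ exactly when the trace map $N_{\bar h}$ is nonzero. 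The same computation in $H$, applied to a lift $vh$ with $(vh)^{d}=N_{\bar h}(v)+h^{d}\in K$, shows that every element of $H$ over $\bar h$ likewise has order $d$ or $dp$. Thus I need only check that both candidate orders already occur in $\omega(H)$: the order $d$ lies in $\omega(A)=\omega(H/K)\subseteq\omega(H)$ by the quotient observation, while if $N_{\bar h}\neq0$ then the coset $\operatorname{Im}N_{\bar h}+h^{d}$ has more than one element, so some lift $vh$ satisfies $(vh)^{d}\neq1$ and hence has order $dp$, placing $dp$ in $\omega(H)$ as well. The crux is precisely this elementary-abelian computation, and the main point to get right is that the ``new'' orders $dp$ produced by the split action are never genuinely new: they are always already forced in the non-split cover $H$.
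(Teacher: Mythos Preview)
The paper does not give its own proof of this lemma; it is simply quoted from \cite[Lemma~2.3]{15Gr}. Your argument is correct and is the standard one: reduce by minimality of $|H|$ to a cover whose kernel $K$ is minimal normal, eliminate the nonabelian case by a Frattini argument, and then compare element orders in the split extension $K\rtimes A$ with those in the (possibly non-split) $H$ via the trace endomorphism $N_{\bar h}$. The key step --- that whenever $N_{\bar h}\neq 0$ the coset $\operatorname{Im}N_{\bar h}+h^{d}$ contains a nonzero element, forcing $dp\in\omega(H)$ --- is handled cleanly, and the rest is routine.
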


\begin{lemma}[{\cite[Lemma 1]{97Maz.t}}] \label{l:action} Let $G$ be a finite group,
$K$ a normal subgroup of $G$ and $G/K~$ a Frobenius group with kernel $N$ and cyclic complement~$C.$ If
$(|N|, |K|)=1$ and $N$ is not contained in $KC_G(K)/K$, then $r|C|\in\omega(G)$ for some $r\in\pi(K)$.
\end{lemma}

\begin{lemma}[{\cite[Lemma 2.7]{15Gr}}]\label{l:hh} Let $S$ be a finite simple group of Lie type over a field of characteristic $p$ and let $S$ act faithfully on a
vector space  $V$ over a field of characteristic $r$, where $r\neq p$. Let $H=V\rtimes S$ be a natural semidirect product of $V$ by $S$.
Suppose that $s$ is a power of $r$ and some proper parabolic subgroup $P$ of $S$ contains an element of order~$s$. If the unipotent radical of $P$ is abelian, or  both $p$ and $r$ are odd, or $p=2$ and $r$ is not a Fermat prime, or $r=2$ and $p$ is not a Mersenne prime, then $rs\in\omega(H)$.
\end{lemma}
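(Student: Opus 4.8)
The plan is to produce the required element of order $rs$ inside the subgroup $V\rtimes P$, reducing its construction to Lemma~\ref{l:action}. Fix an element $g\in P$ of order $s$; since $s$ is a power of $r$ and $r\neq p$, the element $g$ is a $p'$-element, hence semisimple in $S$, and it acts on $V$ unipotently because the only $s$-th root of unity in characteristic $r$ is $1$. Writing $V$ additively and setting $\nu_g=1+g+\dots+g^{s-1}$, the element $vg$ of $V\rtimes\langle g\rangle$ satisfies $(vg)^s=\nu_g v$; since $V$ has exponent $r$ and $vg$ projects onto $g$ of order $s$, the order of $vg$ is exactly $rs$ whenever $\nu_g v\neq 0$ and is $s$ otherwise. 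Thus a naive construction succeeds precisely when $g$ has a Jordan block of size $s$ on $V$, which need not occur for an arbitrary faithful module. This is exactly the obstruction that the unipotent radical of $P$ is meant to remove.

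To bypass it I would apply Lemma~\ref{l:action} with $K=V$. It suffices to exhibit a Frobenius subgroup $N\rtimes C\leqslant P\leqslant S$ whose kernel $N$ is a nontrivial $p$-subgroup and whose cyclic complement $C$ has order $s$: putting $G_0=V\rtimes(N\rtimes C)$, the quotient $G_0/K=N\rtimes C$ is Frobenius, $(|N|,|V|)=1$ because $N$ is a $p$-group and $V$ an $r$-group with $p\neq r$, and the faithfulness of $S$ on $V$ forces $C_{G_0}(V)=V$, so that $N\not\leqslant VC_{G_0}(V)/V$ simply because $N\neq 1$. Lemma~\ref{l:action} then yields an element of order $t\cdot|C|\in\omega(G_0)\subseteq\omega(H)$ for some $t\in\pi(K)=\{r\}$, that is, $rs\in\omega(H)$, as wanted. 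The entire problem is thereby reduced to locating such a Frobenius subgroup with $C=\langle g\rangle$ of order $s$ lying in $P$.

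The natural source of $N$ is the unipotent radical $U$ of $P$, a nontrivial $p$-group normalized by a maximal torus that we may take to contain $g$. Writing $g_0=g^{r^{k-1}}$ for the order-$r$ power of $g$ (where $s=r^k$), a $g$-invariant $p$-subgroup $N\leqslant U$ gives a Frobenius group $N\rtimes\langle g\rangle$ exactly when $C_N(g_0)=1$, since every nontrivial subgroup of $\langle g\rangle$ contains $\langle g_0\rangle$. When $U$ is abelian this is immediate: coprime action splits $U=C_U(g_0)\times[U,g_0]$ with $g_0$ fixed-point-free on $[U,g_0]$, which is $g$-invariant because $g$ centralizes $g_0$, so $N=[U,g_0]$ works provided $g_0$ does not centralize $U$. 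This explains why the hypothesis ``$U$ abelian'' alone forces the conclusion, with no arithmetic input. In general one analyzes the action of $g_0$ on the root subgroups inside $U$ and seeks a $g$-invariant one on which $g_0$ acts by a scalar different from $1$; such a subgroup is fixed-point-free for $g_0$ and serves as $N$.

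The main obstacle is the nonabelian case together with the boundary arithmetic. There $g_0$ may fix nonidentity elements of $[U,U]$ even while acting freely on $U/\Phi(U)$, and whether a fixed-point-free root subgroup exists depends on how the order $r$ of $g_0$ meets the admissible root-character values, that is, on divisibility properties of $q\pm1$. Precisely in the regimes $p=2$ with $r$ a Fermat prime and $r=2$ with $p$ a Mersenne prime these values are forced to include $1$, so that no fixed-point-free piece need exist and the Frobenius construction can genuinely fail; this is exactly why those two arithmetic configurations must be excluded unless $U$ is abelian. I therefore expect the bulk of the work to be a uniform, type-by-type verification that, outside these regimes, some $g$-invariant $p$-subgroup of $U$ carries a fixed-point-free action of $g_0$, matching primitive prime divisors (Lemma~\ref{l:zsig}) to root values across the symplectic, orthogonal, and relevant small-rank Lie types.
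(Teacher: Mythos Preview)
The paper does not prove this lemma; it is quoted from \cite{15Gr}. Your reduction to Lemma~\ref{l:action} via a Frobenius subgroup $N\rtimes\langle g\rangle$ with $N\leqslant U$ is sound for the abelian-$U$ case, and the side condition that $g_0$ does not centralize $U$ is routine (a semisimple element of $L$ centralizing $U$ lies in the centre of the simply connected cover, hence is trivial in the simple group $S$).

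The genuine gap is the nonabelian case. Your plan is to locate a $\langle g\rangle$-invariant subgroup $N\leqslant U$ on which $g_0$ acts fixed-point-freely, and you forecast that a type-by-type check through root subgroups will succeed outside the two excluded arithmetic regimes. It will not. Take $S=S_4(5)$ and let $P$ be the maximal parabolic whose Levi contains $SL_2(5)$; its unipotent radical $U$ is extraspecial of order~$5^3$, and an element $g$ of order~$3$ in the $SL_2(5)$ factor acts irreducibly and fixed-point-freely on $U/Z(U)\cong\mathbb{F}_5^{\,2}$ but trivially on $Z(U)$. The only $\langle g\rangle$-invariant subgroups of $U$ are then $1$, $Z(U)$, and $U$, and neither nontrivial one is fixed-point-free for $g$. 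Here $p=5$ and $r=3$ are both odd, so we are inside the lemma's hypotheses, yet no Frobenius subgroup of the required shape exists and Lemma~\ref{l:action} cannot be invoked.

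The correct argument is precisely the one you set aside at the outset as something that ``need not occur.'' Apply Hall--Higman's Theorem~B to the soluble group $U\rtimes\langle g\rangle$ acting faithfully on $V$: one has $O_r(U\rtimes\langle g\rangle)=1$ because $g$ acts faithfully on $U$, and the four alternatives in the lemma's hypothesis are exactly the conditions excluding the Hall--Higman exceptional configurations (nonabelian Sylow $2$-subgroup with $r$ a Fermat prime; nonabelian Sylow $q$-subgroup with $q$ Mersenne and $r=2$). Hence the minimal polynomial of $g$ on $V$ has degree $s$, so $\nu_g=(g-1)^{s-1}\neq 0$ on $V$ and some $vg\in H$ has order $rs$. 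The Jordan block of size $s$ is forced after all; the parabolic is needed only to supply $U$, not to furnish a Frobenius complement.
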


We conclude with several lemmas on spectra of symplectic and orthogonal groups. If $p\in\pi(G)$, then $\omega_{p'}(G)$ denotes the subset of $\omega(G)$ consisting of numbers coprime to $p$.  In Lemmas \ref{l:spectrum_s_odd} and \ref{l:spectrum_s_2}, $\pm$ in $[a_1\pm 1,\dots, a_s\pm 1]$ means that we can choose $+$ or $-$ for every entry independently. In Lemma \ref{l:semi}, for brevity, we write $\varepsilon$ instead of $\varepsilon1$ for $\varepsilon\in\{+,-\}$.

\begin{lemma}[{\cite[Corollaries 2 and 6]{10But.t}}]\label{l:spectrum_s_odd}
Let $L$ be one of the simple groups $S_{2n}(q)$ or  $O_{2n+1}(q)$, where $n\geqslant 2$ and $q$ is a power of an odd prime $p$. Let $d=1$ if $L=S_{2n}(q)$ or $n=2$, and let  $d=2$ if $L=O_{2n+1}(q)$ with $n\geqslant 3$.
Then $\omega(L)$ consists of all divisors of the following numbers:
\begin{enumerate}
 \item $(q^n\pm1)/2$;
 \item $[q^{n_1}\pm 1,\dots,q^{n_s}\pm 1]$, where $s\geqslant 2$, $n_i>0$ for all $1\leqslant i\leqslant s$ and $n_1+\dots+n_s=n$;
 \item $p^k(q^{n_1}\pm 1)/d$, where $k,n_1>0$ and $p^{k-1}+1+2n_1=2n$;
 \item $p^k[q^{n_1}\pm 1,\dots, q^{n_s}\pm1]$, where $k>0$, $s\geqslant 2$,  $n_i>0$ for all $1\leqslant i\leqslant s$ and $p^{k-1}+1+2(n_1+\dots+n_s)=2n$;
 \item $p^k$ if $2n=p^{k-1}+1$ for some $k > 0$.
\end{enumerate}
\end{lemma}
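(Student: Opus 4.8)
The plan is to compute $\omega(L)$ through the Jordan decomposition and then to descend from the relevant covering group to the simple group $L$. Since $B_n$ and $C_n$ are dual root systems, it is convenient to work uniformly with the simply connected group $\widehat L$, namely $\mathrm{Sp}_{2n}(q)$ when $L=S_{2n}(q)$ and $\mathrm{Spin}_{2n+1}(q)$ when $L=O_{2n+1}(q)$; note that $\mathrm{Sp}_4(q)\cong\mathrm{Spin}_5(q)$, which is the source of the exceptional value $d=1$ at $n=2$. In $\widehat L$ every element $g$ factors uniquely as $g=su=us$ with $s$ semisimple of order coprime to $p$ and $u$ unipotent of $p$-power order, and $u$ lies in the connected reductive centraliser $C(s)$; thus $|g|=|s|\cdot|u|$ and $\omega(\widehat L)$ is the set of all products $|s|\cdot p^k$ as $s$ runs over semisimple classes and $p^k$ over unipotent orders in $C(s)$. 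I would organise the whole computation around this product.

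First I would read off the $p'$-part from the maximal tori. The $F$-stable maximal tori of a group of type $B_n\cong C_n$ are indexed by conjugacy classes of the Weyl group of signed permutations, and the torus attached to a signed cycle type with cycle lengths $n_1,\dots,n_s$ and signs $\varepsilon_1,\dots,\varepsilon_s$ (where $n_1+\dots+n_s=n$) decomposes as a product of cyclic groups with factors dividing $q^{n_i}-\varepsilon_i$. Hence the orders of semisimple elements are precisely the divisors of $[q^{n_1}\pm1,\dots,q^{n_s}\pm1]$, with the signs chosen independently. The single-cycle case $s=1$ gives the cyclic Singer-type tori of orders $q^n\pm1$, and the case $s\geqslant2$ gives the general least common multiple; this is the shape of items (1) and (2).

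Next I would incorporate the unipotent part. For a semisimple $s$ the isotypic decomposition of the natural module $V$ realises $C(s)$ as a product of general linear factors acting on the non-fixed part and a classical group of the same type acting on a subspace $V_0$; only the latter contributes unipotent elements. In odd characteristic a unipotent element has order $p^k$, where its largest Jordan block has size $m$ with $p^{k-1}<m\leqslant p^k$, so the smallest block forcing order $p^k$ has even size $m=p^{k-1}+1$. Placing one such block in $V_0$ and letting $s$ act regularly on a complementary nondegenerate subspace of dimension $2(n_1+\dots+n_s)$ gives the relation $p^{k-1}+1+2(n_1+\dots+n_s)=2n$ and an element of order $p^k[q^{n_1}\pm1,\dots,q^{n_s}\pm1]$; the single-factor subcase is item (3) and the multi-factor subcase item (4). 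When the whole module is a single Jordan block, i.e. $2n=p^{k-1}+1$, there is no room left for a semisimple part and the order is simply $p^k$, which is item (5). Here the coincidence $B_n\cong C_n$ lets me treat the symplectic space $V$ and the $2n$-dimensional nondegenerate part of the odd orthogonal module on an equal footing, the extra anisotropic line of $O_{2n+1}$ being fixed by every such element.

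Finally --- and this is where I expect the genuine difficulty --- I would pass from $\widehat L$ to $L=\widehat L/Z$ with $|Z|=2$ and track the resulting halving. The central involution $-1$ lies in every maximal torus, so for a cyclic Singer torus it is the unique involution and a generator of order $q^n\pm1$ maps to an element of order $(q^n\pm1)/2$, which explains the division in item (1). For the multi-factor tori of item (2) any halving only produces an order already present among the divisors of the least common multiple, so no division is recorded. The delicate point is the mixed case: whether $-1$ lies in the cyclic factor carrying the semisimple part or in the block carrying the unipotent part depends on the isogeny type, and this is exactly what distinguishes $d=1$ for $S_{2n}(q)$ (and for the coincidence $B_2\cong C_2$) from $d=2$ for $O_{2n+1}(q)$ with $n\geqslant3$ in item (3). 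Settling it requires a careful analysis of the position of $-1$ relative to the decomposition $V=V_0\perp V_1$ and, in the orthogonal case, of the spinor norm; assembling these halvings consistently and verifying that items (1)--(5) exhaust all products $|s|\cdot p^k$ without redundancy is the main obstacle.
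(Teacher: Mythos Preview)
The paper does not prove this lemma at all: it is quoted verbatim from Buturlakin's work \cite[Corollaries~2 and~6]{10But.t} and used as a black box, so there is no in-paper argument to compare against. What you have written is, in fact, a faithful sketch of the method Buturlakin uses in that reference: compute $\omega$ of the simply connected cover via the Jordan decomposition, parametrise the maximal tori by signed cycle types of the Weyl group $W(B_n)=W(C_n)$, read the unipotent contribution from the largest Jordan block inside the centraliser of the semisimple part, and finally descend modulo the centre. Your identification of the constraint $p^{k-1}+1+2\sum n_i=2n$ and of the borderline case (v) is exactly right.

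Two comments on the places you flag as delicate. Your claim that no halving is visible in item~(ii) is correct and can be made precise: for $s\geqslant 2$ the Sylow $2$-subgroup of the torus $\prod C_{q^{n_i}-\varepsilon_i}$ is $\prod C_{2^{e_i}}$, and quotienting by the diagonal involution $-I$ yields $C_{2^{e-1}}\times$(the rest) after a basis change, so the exponent $2^{\max e_i}$ is preserved; hence $[q^{n_1}\pm1,\dots,q^{n_s}\pm1]$ survives in $L$. For item~(iii) your heuristic about ``where $-1$ sits'' is the right picture in $Sp_{2n}$, where $-I$ acts as $-1$ on the unipotent block and therefore is not a power of an element that is unipotent there, whence $d=1$; in the odd orthogonal case one must instead compute in $\mathrm{Spin}_{2n+1}(q)$ (or equivalently track the spinor norm in $\Omega_{2n+1}(q)$), and the central involution of $\mathrm{Spin}$ does lie in the cyclic semisimple factor, forcing $d=2$ for $n\geqslant 3$. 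That computation is genuinely the technical heart of Buturlakin's Corollary~6, and your outline stops just short of it, but you have correctly located the obstacle.
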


\begin{lemma}[{\cite[Corollary 3]{10But.t}}]\label{l:spectrum_s_2}
Let $L=S_{2n}(q)$, where $n\geqslant 2$ and $q$ is even. Then $\omega(L)$ consists of all divisors of the following numbers:
\begin{enumerate}
 \item $[q^{n_1}\pm 1,\dots,q^{n_s}\pm 1]$, where $s\geqslant 1$, $n_i>0$ for all $1\leqslant i\leqslant s$ and $n_1+\dots+n_s=n$;
 \item $2[q^{n_1}\pm 1,\dots,q^{n_s}\pm 1]$, where $s\geqslant 1$, $n_i>0$ for all $1\leqslant i\leqslant s$ and $n_1+\dots+n_s=n-1$;
 \item $2^k[q^{n_1}\pm 1,\dots, q^{n_s}\pm1]$, where $k\geqslant 2$, $s\geqslant 1$,  $n_i>0$ for all $1\leqslant i\leqslant s$ and $2^{k-2}+1+n_1+\dots+n_s=n$;
 \item $2^k$ if $n=2^{k-2}+1$ for some $k\geqslant 2$.
\end{enumerate}
\end{lemma}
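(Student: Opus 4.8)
The plan is to compute $\omega(L)$ directly from the Jordan decomposition of elements of $L=\mathrm{Sp}_{2n}(q)$ (which coincides with $\mathrm{PSp}_{2n}(q)$ since $q$ is even), writing each $g\in L$ as $g=su=us$ with $s$ semisimple of odd order and $u$ unipotent of $2$-power order. As $s$ and $u$ commute and have coprime orders, $|g|=|s|\cdot|u|$, so the problem splits into determining the possible odd orders $|s|$, the possible $2$-powers $|u|$, and which products are simultaneously realisable. First I would recall that the odd orders are governed by the maximal tori of $L$: these are parametrised by signed cycle types, a torus attached to parts $n_1,\dots,n_s$ with $\sum n_i=n$ being a product of cyclic groups of orders $q^{n_i}\mp1$, so its element orders are precisely the divisors of $[q^{n_1}\pm1,\dots,q^{n_s}\pm1]$. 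This yields item (i) for $u=1$, and the padding observation that adjoining an extra part $m$ only multiplies the least common multiple by a further $q^{m}\pm1$ (so a divisor for a type with $\sum n_i=a$ is again a divisor for any type with $\sum n_i=a'>a$) will be used repeatedly to reduce each case to the extremal value of $\sum n_i$.

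Next I would analyse the unipotent part using the classification of unipotent classes of $\mathrm{Sp}_{2n}$ in characteristic $2$: a unipotent element has a Jordan type (a partition of $2n$) in which every odd part occurs with even multiplicity, and its order is the least power of $2$ that is at least the largest part. The decisive quantitative point is the minimal symplectic dimension supporting a unipotent of prescribed order $2^k$. For $k=1$ a single block of size $2$ suffices, living in $\mathrm{Sp}_2(q)$ (dimension $2$); for $k\geqslant2$ one needs a block of size $>2^{k-1}$, and since an odd block of size $2^{k-1}+1$ would have to be doubled, the economical choice is a single even block of size $2^{k-1}+2$, lying in $\mathrm{Sp}_{2^{k-1}+2}(q)$, i.e.\ in symplectic dimension $2(2^{k-2}+1)$. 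I would verify directly that this dimension is genuinely minimal, and in particular that placing the large block inside a general linear factor of a centraliser is never cheaper: such a factor of $\mathrm{GL}_d$ occupies symplectic dimension $2d\geqslant2(2^{k-1}+1)>2^{k-1}+2$.

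With these ingredients I would assemble the element orders by decomposing the natural module $V=\mathbb{F}_q^{2n}$ under $s$ as $V=V_0\perp V_1$, where $V_1$ is the fixed (eigenvalue one) part carrying a nondegenerate symplectic form and $V_0$ collects the nontrivial eigenvalue pairs $\{\mu,\mu^{-1}\}$; then the connected centraliser of $s$ is a product of the symplectic group $\mathrm{Sp}(V_1)$ with several general linear factors, $|s|$ divides $[q^{n_1}\pm1,\dots]$ over the $V_0$-blocks, and $u$ is a unipotent element of this centraliser. For realisability I would take $u$ to be a minimal unipotent of order $2^k$ inside the symplectic factor, on a subspace of dimension $2$ (giving item (ii)) or $2(2^{k-2}+1)$ (giving item (iii)), and $s$ an arbitrary element of a torus of the complementary symplectic space of dimension $2\sum n_i$, thereby obtaining every divisor of $2^k[q^{n_1}\pm1,\dots]$; item (iv) is the degenerate subcase in which the unipotent exhausts the whole space ($\sum n_i=0$), leaving no semisimple factor and only the bare power $2^k$ with $n=2^{k-2}+1$. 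For the reverse inclusion I would bound, for an arbitrary $g=su$, the power $|u|=2^k$ by the minimal-dimension result applied to whichever factor ($\mathrm{Sp}(V_1)$ or a general linear factor) realises it, deduce the resulting upper bound on $\sum n_i$, and invoke the padding observation to conclude that $|s|\cdot2^k$ divides one of the extremal numbers in (i)--(iv).

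The hard part will be the bookkeeping in this ``completeness'' step: one must prove that the minimal-dimension bound for characteristic-$2$ unipotents is sharp and interacts correctly with the general linear factors of the centraliser, and that every admissible Jordan type is dominated, after padding, by the extremal signed cycle types. In particular the characteristic-$2$ constraint that odd parts have even multiplicity, which forces the ``$+2$'' in the dimension $2^{k-1}+2$ and hence the shift $2^{k-2}+1$ in items (iii)--(iv), is exactly the feature distinguishing this computation from the odd-characteristic count of Lemma~\ref{l:spectrum_s_odd}, and getting this edge behaviour precisely right is where the real care is needed.
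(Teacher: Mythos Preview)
The paper does not prove this lemma at all; it simply quotes it as \cite[Corollary~3]{10But.t} (Buturlakin). Your outline is a correct direct derivation along the standard lines of that reference: Jordan decomposition $g=su$, maximal tori giving the semisimple orders $[q^{n_1}\pm1,\dots,q^{n_s}\pm1]$, the characteristic-$2$ constraint on symplectic Jordan types (odd parts with even multiplicity) yielding the minimal symplectic dimension $2$ for $|u|=2$ and $2^{k-1}+2$ for $|u|=2^k$ with $k\geqslant2$, and the centraliser decomposition $C_{Sp(V)}(s)\simeq Sp(V_1)\times\prod GL_{a_i}(q^{d_i})$ to combine the two. The bookkeeping you flag as delicate (that a $GL$-factor cannot carry a unipotent of order $2^k$ more cheaply than the symplectic factor, and the padding argument reducing to the extremal $\sum n_i$) is exactly what Buturlakin carries out; there is nothing missing from your plan, and it is essentially the same proof as in the cited source.
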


\begin{lemma}[{\cite[Proposition 3.1(5)]{05VasVd.t}}]\label{l:adj_p}
Let $L=O_{2n}^\pm(q)$, where $n\geqslant 4$ and $q$ is a power of a prime $p$. If $r\in \pi(L)\cap R_k(q)$, where $k$ is odd and $k>n-2$, or $k$ is even and $k/2>n-2$,  then $rp\not\in\omega(L)$.
\end{lemma}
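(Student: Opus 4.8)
The plan is to realise a hypothetical element of order $rp$ as a commuting product of its semisimple and unipotent parts and then to show that no such pair can exist inside $L$. If $r\notin\pi(L)$ the assertion is vacuous, so I assume $r\in\pi(L)$ and view $L=\Omega^\pm_{2n}(q)$ inside the full orthogonal group $O=O^\pm_{2n}(q)$ acting on the natural module $V=V_{2n}(q)$. Since $r\ne p$, an element of order $rp$ would be $g=su=us$ with $s$ of order $r$ semisimple and $u$ of order $p$ unipotent; note that $r$ is odd (a primitive prime divisor $r_k(q)$ with $k\geqslant n-1\geqslant 3$), so $s$ has odd order and hence lies in $\Omega=L$. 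It therefore suffices to prove that $C_L(s)$ contains no element of order $p$ for every such $s$.

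First I would pin down the module structure of $s$. Write $e=k$ if $k$ is odd and $e=k/2$ if $k$ is even, so that the hypothesis reads $e\geqslant n-1$. Being semisimple, $s$ preserves the decomposition $V=W\perp U$ into the nondegenerate fixed space $W=C_V(s)$ and its orthogonal complement $U$, on which $s$ is fixed-point-free. An eigenvalue $\lambda$ of $s$ is a primitive $r$th root of unity whose Frobenius orbit has size exactly $k$ (as $r\in R_k(q)$), and $s$ preserves the form, so $\lambda^{-1}$ is also an eigenvalue. Analysing these orbits shows that the minimal nondegenerate $s$-invariant blocks all have dimension $2e$: when $k$ is odd, $\lambda^{-1}$ lies outside the orbit of $\lambda$, the blocks are hyperbolic of dimension $2k$ with block-centraliser $GL_1(q^k)$; when $k$ is even, $\lambda^{-1}$ lies in the orbit of $\lambda$, the blocks are orthogonal of minus type and dimension $k$ with block-centraliser $GU_1(q^{k/2})$. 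In either case the centraliser of $s$ on a single block is cyclic of order $q^k-1$ or $q^{k/2}+1$, which is coprime to $p$.

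Next I would run the dimension count. If $U$ is a sum of $t$ blocks then $\dim U=2et$, so $2et\leqslant 2n$ and $et\leqslant n$; combined with $e\geqslant n-1$ this gives $t(n-1)\leqslant n$, forcing $t=1$ (as $n\geqslant 4$), and then $\dim W=2n-2e\leqslant 2$. Consequently $C_O(s)=C\times O(W)$, where $C$ is the cyclic block-centraliser above and $O(W)$ is either trivial or isomorphic to $O^\pm_2(q)$, a dihedral group of order $2(q\mp1)$. For $p$ odd this already finishes the proof: $|C_O(s)|$ is coprime to $p$ because each of $q^k-1$, $q^{k/2}+1$ and $2(q\mp1)$ is coprime to $p$, so $C_L(s)$ has no element of order $p$ and $rp\notin\omega(L)$.

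The genuinely delicate case is $p=2$, and this is where I expect the main obstacle. Here $|O(W)|=2(q\mp1)$ is even and $O^\pm_2(q)$ does contain involutions, so counting the order of the centraliser is not enough. My plan is instead to locate these involutions: since $C$ has odd order, every involution of $C_O(s)$ lies in the $O(W)$-factor and is an orthogonal reflection, hence has nontrivial Dickson invariant and lies in $O\setminus\Omega=O\setminus L$. Thus $C_L(s)$ contains no involution, and $2r=rp\notin\omega(L)$. The crux of the whole argument is therefore this spinor-norm obstruction in characteristic two, where the correct conclusion comes not from the size of the centraliser but from the fact that the relevant unipotent involutions are pushed outside the simple group $\Omega^\pm_{2n}(q)$.
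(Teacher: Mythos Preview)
The paper does not prove this lemma at all; it is quoted from Vasil'ev--Vdovin, where it is one instance of a uniform adjacency criterion for $p$ in the prime graph of a finite simple classical group. Your direct argument is correct and is, in effect, the centraliser computation that underlies that criterion in types $D_n$ and ${}^2D_n$.

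The dimension count forcing a single nondegenerate $s$-block of dimension $2e\geqslant 2n-2$, and hence $\dim C_V(s)\leqslant 2$, is right, as is the identification $C_O(s)=C\times O(W)$ with $C$ cyclic of order prime to $p$. Your treatment of the case $p=2$ via the Dickson invariant is the genuine content: for even $q$ the involutions of $O_2^\pm(q)$ are orthogonal transvections with nontrivial quasideterminant, the quasideterminant is additive with respect to $O(W)\times C\hookrightarrow O(V)$, and $C$ has odd order, so $C_\Omega(s)=C_L(s)$ has odd order and $2r\notin\omega(L)$. (A small aside: your justification that $s\in\Omega$ because it has odd order is fine, but the simpler reason is that $s$ is a power of $g\in L$.) Where Vasil'ev--Vdovin package this and the analogous statements for all classical types into one theorem, your hands-on argument is entirely adequate for the single case used in the present paper.
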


\begin{lemma}\label{l:semi}
Let $L=O_{2n}^\varepsilon(q)$, where $n\geqslant 4$, $\varepsilon\in\{+,-\}$ and $q$ is a power of a prime $p$. Then $\omega_{p'}(L)$ consists of all divisors of the following numbers:
\begin{enumerate}
 \item $(q^n-\varepsilon)/(4,q^n-\varepsilon)$;
 \item $[q^{n_1}-\delta, q^{n_2}-\varepsilon\delta]/d$, where $\delta\in\{+,-\}$, $n_1,n_2>0$, $n_1+n_2=n$;  $d=2$ if $(4,q^n-\varepsilon)=4$, $(q^{n_1}-\delta)_2=(q^{n_2}-\varepsilon\delta)_2$,
 and $d=1$ otherwise;
 \item $[q^{n_1}-\delta_1, q^{n_2}-\delta_2, \dots, q^{n_s}-\delta_s]$, where  $s\geqslant 3$, $\delta_i\in\{+,-\}$, $n_i>0$ for all $1\leqslant i\leqslant s$, $n_1+\dots+n_s=n$ and $\delta_1\delta_2\dots\delta_s=\varepsilon$.
\end{enumerate}

\end{lemma}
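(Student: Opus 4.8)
The plan is to identify $\omega_{p'}(L)$ with the set of orders of semisimple elements of $L=P\Omega_{2n}^\varepsilon(q)$ and to compute the latter from the maximal tori. Since $|Z(\Omega_{2n}^\varepsilon(q))|=(4,q^n-\varepsilon)$ is a power of $2$ (and is trivial when $q$ is even), a $p'$-element of $L$ lifts to a $p'$-element of $\Omega=\Omega_{2n}^\varepsilon(q)$; such elements are exactly the semisimple ones, and every semisimple element lies in some maximal torus. As each maximal torus $T$ is abelian, the set of orders of its elements is precisely the set of divisors of its exponent $\exp(T)$, and the exponent of a direct product of cyclic groups is the least common multiple of their orders. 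Hence $\omega_{p'}(L)=\bigcup_T\{m:m\mid \exp(\overline T)\}$, where $\overline T$ runs over the images in $L$ of the maximal tori of $\Omega$, and the whole problem reduces to listing these tori and computing $\exp(\overline T)$.

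First I would classify the maximal tori through the action on the natural module $V$. A semisimple $g$ decomposes $V$ into an orthogonal sum $V_1\perp\dots\perp V_s$ of nondegenerate subspaces, where $\dim V_i=2n_i$ and $g$ restricted to $V_i$ lies in a cyclic Singer-type subgroup of $GO_{2n_i}^{\delta_i}(q)$ of order $q^{n_i}-\delta_i$ (with $\delta_i=+$ for the split and $\delta_i=-$ for the nonsplit piece; one checks these already lie in $SO$). The multiplicativity of the type under orthogonal sums forces $\delta_1\delta_2\cdots\delta_s=\varepsilon$ together with $n_1+\dots+n_s=n$, which is exactly the indexing in the statement, with $s=1$, $s=2$ and $s\geqslant3$ producing the three items. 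At the level of $SO_{2n}^\varepsilon(q)$ the corresponding torus is $\prod_{i=1}^s C_{q^{n_i}-\delta_i}$, so its exponent equals $[q^{n_1}-\delta_1,\dots,q^{n_s}-\delta_s]$; this yields the asserted numbers before the corrections by $d$ and $(4,q^n-\varepsilon)$.

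It remains to pass from $SO$ to $\Omega$ and then to $L$. Restricting to $\Omega$ is a condition of index at most $2$ (the kernel of the spinor norm), and factoring out $Z=Z(\Omega)$ of order $(4,q^n-\varepsilon)$ is a further quotient by a $2$-group; consequently only the $2$-part of $\exp(T)$ can change, while the odd part is untouched. For $q$ even both operations are trivial and each item holds with $d=1$ and $(4,q^n-\varepsilon)=1$. For $q$ odd I would carry out the $2$-adic bookkeeping: writing $a_i=(q^{n_i}-\delta_i)_2$, the $2$-part of $\exp(\overline T)$ is governed by how $Z$ and the spinor-norm subgroup sit inside the Sylow $2$-subgroup $\prod_i C_{2^{a_i}}$. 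When $s=1$ the torus is cyclic of order $q^n-\varepsilon$ and the two successive quotients divide it by $(4,q^n-\varepsilon)$ altogether, giving item (i). When $s=2$ a factor $2$ is lost precisely when $Z$ has order $4$ and its involutions lie in the top $2$-layer of the torus, that is, when $(q^{n_1}-\delta)_2=(q^{n_2}-\varepsilon\delta)_2$, which is the definition of $d$ in item (ii). When $s\geqslant3$ no reduction occurs: one exhibits an element $g$ of $\overline T$ of maximal $2$-power order supported, after adjusting the spinor norm on a second summand, on a single $V_i$, so that the relevant power $g^{2^{a_i-1}}$ acts as $-1$ on a proper nondegenerate subspace and is therefore noncentral; thus $\exp(\overline T)$ retains the full $2$-part and item (iii) has no correction.

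The main obstacle is exactly this last, $2$-adic, step: determining in every configuration how the center (which is $C_4$ or $C_2\times C_2$ according to the parity of $n$ and the residue of $q^n$ modulo $4$) and the spinor-norm index meet the top $2$-layer of the torus, and proving both that every semisimple order divides one of the listed numbers and that each divisor is attained. I expect the computation for $s=1,2$, where the corrections live, and the noncentrality argument for $s\geqslant3$ to be the technical heart, while the remaining low-rank and $q$-even cases are routine. It is reassuring that the outcome parallels the symplectic and odd-dimensional orthogonal descriptions already recorded in Lemmas~\ref{l:spectrum_s_odd} and~\ref{l:spectrum_s_2}, where the purely semisimple divisors arise in the same cyclotomic form.
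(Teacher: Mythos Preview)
Your plan is sound and would yield a correct proof: identifying $\omega_{p'}(L)$ with the semisimple element orders, parametrising the maximal tori of $SO_{2n}^\varepsilon(q)$ by sign-sequences $(\delta_1,\dots,\delta_s)$ with $\prod\delta_i=\varepsilon$ and $\sum n_i=n$, and then tracking the $2$-part through the spinor-norm restriction and the central quotient is exactly the right machinery, and your case split $s=1$, $s=2$, $s\geqslant3$ matches the three items in the statement.

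The difference from the paper is purely one of packaging. The paper does not prove the lemma at all in the body; it simply records that the assertion follows from \cite[Theorem~6]{07ButGr.t}, which already contains the full cyclic structure of the maximal tori of $P\Omega_{2n}^\varepsilon(q)$ (including the precise $2$-adic corrections you describe). What you have sketched is, in effect, a reconstruction of the relevant portion of that reference. Your approach buys self-containment at the cost of redoing a nontrivial computation; the paper's approach is a one-line appeal to existing literature. Both are legitimate, and the technical heart you single out (the $2$-adic bookkeeping for $s\leqslant2$ and the noncentrality argument for $s\geqslant3$) is indeed where the work in the cited source lies.
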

\begin{proof}
The assertion follows, for example, from \cite[Theorem 6]{07ButGr.t}.
\end{proof}

\begin{lemma} \label{l:spectrum_o8p}
Let $L=O_8^+(q)$, where $q$ is a power of a prime $p$. Then  $\omega(L)$ consists of all divisors of the following numbers:
\begin{enumerate}
 \item $(q^4-1)/(2,q-1)^2$, $(q^3\pm1)/(2,q-1)$, $q^2-1$, $p(q^2\pm1)/(2,q-1)$;
 \item $p^2(q\pm 1)/(2,q-1)$ if $p=2,3$;
 \item $25$ if $p=5$;
 \item $8$ if $p=2$.
\end{enumerate}

\end{lemma}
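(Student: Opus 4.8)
The plan is to compute $\omega(L)$ in two parts: the set $\omega_{p'}(L)$ of orders of semisimple elements, and the orders divisible by $p$, which come from unipotent and mixed elements.

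For $\omega_{p'}(L)$ I would simply invoke Lemma~\ref{l:semi} with $n=4$ and $\varepsilon=+$ and simplify. The partitions of $4$ into at least two parts are few ($\{1,3\}$, $\{2,2\}$ for item~(ii) of that lemma, and $\{2,1,1\}$, $\{1,1,1,1\}$ for item~(iii)), and since $q^a-\delta$ divides $q^b-\delta$ whenever $a\mid b$, most of the least common multiples collapse: for instance $[q-\delta,q^3-\delta]=q^3-\delta$ and $[q^2-1,q-1,q-1]=q^2-1$. Determining the factor $d$ in each instance of item~(ii) — it equals $(2,q-1)$, because the relevant cofactors $q^2+q+1$, $q^2-q+1$, $q^2+1$ are odd for odd $q$ — and using $(4,q^4-1)=(2,q-1)^2$ together with the fact that $(q^2\pm1)/(2,q-1)$ divides $(q^4-1)/(2,q-1)^2$, one finds that $\omega_{p'}(L)$ consists exactly of the divisors of $(q^4-1)/(2,q-1)^2$, $(q^3\pm1)/(2,q-1)$ and $q^2-1$, that is, of the $p'$-parts of the numbers in item~(i).

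For the elements of order divisible by $p$ I would pass to $\widetilde L=\Omega_8^+(q)$, which has $L=\widetilde L/Z$ as its quotient by the centre $Z$, of order $(2,q-1)^2$; since $Z$ is a $\{2\}$-group, this accounts for the denominators $(2,q-1)$, $(2,q-1)^2$, and for odd $p$ the $p$-part of element orders is the same in $L$ and in $\widetilde L$. Every $p$-singular element has a Jordan decomposition $g=su=us$ with $u\neq 1$ unipotent and $s$ semisimple in the reductive part $C^\circ$ of $C_{\widetilde L}(u)$, so $|g|$ divides $|u|\cdot m$ with $m\in\omega_{p'}(C^\circ)$. I would then run through the unipotent classes of $\widetilde L$, which are described by the Jordan types on the natural $8$-dimensional orthogonal module admissible for the quadratic form (in odd characteristic even-sized blocks have even multiplicity; in characteristic~$2$ odd-sized blocks have even multiplicity, together with the extra constraints on the even blocks singling out the $+$ type). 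For each admissible type, $|u|$ is read off from the largest block — at most $7$ in odd characteristic, where the regular class is $[7,1]$, and at most $8$ in characteristic~$2$ — so that the unipotent orders are $p$ for $p\geq 7$, while $p\in\{2,3,5\}$ also yield $8$, $9$ and $25$ respectively; and $C^\circ$ is a direct product of smaller orthogonal, symplectic and linear groups over $\mathbb F_q$ occupying the complementary dimensions. Combining $|u|$ with $\omega_{p'}(C^\circ)$ and joining the outcome to the list of the first part should reproduce items~(i)--(iv): the products $p(q^2\pm1)/(2,q-1)$ occur for every $p$ (a unipotent of order $p$ commuting with a cyclic torus of order $q^2\pm1$ in a complementary $4$-dimensional factor); $p^2(q\pm1)/(2,q-1)$ occurs exactly when a Jordan block of size $>p$ still leaves room for a $p'$-torus of order $q\pm1$ in the centralizer, which happens only for $p\in\{2,3\}$; and $25$ (for $p=5$) and $8$ (for $p=2$) come from the regular, or nearly regular, unipotent elements, whose centralizers contribute nothing coprime to $p$.

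The main obstacle is precisely that last claim in characteristic~$2$: there the unipotent classes and the structure of their centralizers in $O_8^+(q)$ are not dictated by the Jordan block sizes alone — one must keep track of the quadratic-form invariants attached to even-dimensional blocks and carefully separate $O_8^+$ from $O_8^-$ — and the real work is to show that no $2$-power larger than $8$ occurs in element orders and that spurious orders, most notably $8(q\pm1)$ which a naive block count would suggest, do not in fact arise. Once this bookkeeping is done, items~(ii)--(iv) follow. An equivalent and probably cleaner route, which is how I would organize the final write-up, is to establish a general description of $\omega\bigl(O_{2n}^\pm(q)\bigr)$ in the same style as Lemmas~\ref{l:spectrum_s_odd} and \ref{l:spectrum_s_2} and then specialize it to $n=4$.
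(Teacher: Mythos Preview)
Your outline is a reasonable strategy for computing $\omega(O_8^+(q))$ from first principles, and the semisimple part via Lemma~\ref{l:semi} is handled correctly. However, the paper does not carry out any such computation: its entire proof is the one-line citation ``follows from \cite[Corollaries 4 and 9]{10But.t}''. Buturlakin's paper already establishes, for all $n$, explicit descriptions of $\omega(O_{2n}^\pm(q))$ in both odd and even characteristic, and the lemma here is obtained by specializing those general results to $n=4$. In other words, the ``cleaner route'' you propose in your last sentence---a general description in the style of Lemmas~\ref{l:spectrum_s_odd} and~\ref{l:spectrum_s_2}, then specialization---is precisely the content of the reference the paper invokes, and Lemmas~\ref{l:spectrum_s_odd} and~\ref{l:spectrum_s_2} themselves come from the same source.

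Your direct approach is not wrong, but it duplicates existing work, and the part you flag as the main obstacle (the characteristic-$2$ bookkeeping: distinguishing $O_8^+$ from $O_8^-$ via the quadratic-form invariants on even blocks, and ruling out orders such as $8(q\pm1)$) is genuinely delicate and is exactly what Buturlakin's analysis handles. One further point of care in your sketch: passing from $\widetilde L=\Omega_8^+(q)$ to $L=\widetilde L/Z$ does not simply divide all element orders by $|Z|$; you must track, for each mixed class, whether a suitable power of a representative lands in the centre, and this is where the denominators $(2,q-1)$ and $(2,q-1)^2$ are pinned down. All of this is absorbed into the cited corollaries, which is why the paper treats the lemma as a black box.
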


\begin{proof}
The assertion follows from \cite[Corollaries 4 and 9]{10But.t}.
\end{proof}

\begin{lemma}  \label{l:spectrum_o8m}
Let $L=O_8^-(q)$, where $q$ is even. Then  $\omega(L)$ consists of all divisors of the following numbers:
$q^4\pm1$, $(q^2\pm q+1)(q^2-1)$, $2(q^2+1)(q\pm 1)$, $4(q^2-1)$, and $8$.
\end{lemma}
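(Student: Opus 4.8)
The plan is to compute $\omega(L)$ directly, working in the natural $8$-dimensional orthogonal $\mathbb{F}_q$-module $V$ of minus type for $L=O_8^-(q)$; note that $L=\Omega_8^-(q)$ since $q$ is even. Each $g\in L$ is a product of commuting elements $g=su$, where $s$ is the $2'$-part of $g$ (semisimple, of odd order), $u$ is the $2$-part of $g$ (unipotent), $|g|=|s|\,|u|$, and $u\in C_L(s)$. So it suffices to find: (a) the odd numbers in $\omega(L)$, i.e.\ $\omega_{2'}(L)$; (b) the orders of the unipotent elements of $L$; and (c) for every semisimple $s\neq1$, the orders of the unipotent elements of $C_L(s)$. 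For (a) one applies Lemma~\ref{l:semi} with $n=4$, $\varepsilon=-$: as $q^4+1$ is odd and each $q^{n_i}\pm1$ is even, all the correction factors $d$ in that lemma equal $1$, and running through the signed partitions of $4$ shows that $\omega_{2'}(L)$ consists of all divisors of $q^4+1$, of $q^4-1=[q^2-1,q^2+1]$, of $(q^2\pm q+1)(q^2-1)=[q^3\mp1,q\pm1]$, and of $(q^2+1)(q\pm1)=[q^2+1,q\pm1,q\pm1]$; the remaining partitions contribute only divisors of $q^2-1$. This is precisely the set of odd divisors of the integers in the statement.

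For (b): a unipotent element of $O_8(\overline{\mathbb{F}}_q)$ has Jordan blocks on $V$ of size at most $8$, hence order dividing $2^{\lceil\log_2 8\rceil}=8$; the regular unipotent element of $D_4$ has Jordan type $[7,1]$ and order $8$, and unipotent elements realising the orders $2$ and $4$ also occur, so the unipotent elements of $L$ have orders exactly $1,2,4,8$. For (c), fix $s\neq1$ semisimple and decompose $V$ under $\langle s\rangle$ as the fixed subspace $C_V(s)$ perpendicular to the sum of the nontrivial homogeneous components; then $C_L(s)$ is built from the orthogonal group on $C_V(s)$ and from linear- and unitary-type groups acting on the remaining components, so the unipotent part of any element of $C_L(s)$ is a product of unipotents of these factors. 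The key quantity is the largest $2$-power occurring as the order of such a unipotent with \emph{trivial Dickson invariant}: the last condition is forced by $su\in\Omega_8^-(q)$ and $D(s)=0$ (which holds since $s$ has odd order), and it is genuinely restrictive because in even characteristic a unipotent element of an orthogonal group need not lie in $\Omega$.

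The lower bounds are obtained by explicit elements. Inside $\Omega_4^+(q)\cong SL_2(q)\times SL_2(q)$, acting on a plus-type subspace $V_4^+$ with $V=V_4^-\perp V_4^+$, take $s=t\times(g_0\otimes1)$ where $t$ has order $q^2+1$ on $V_4^-$ and $g_0\in SL_2(q)$ has order $q\pm1$; then $C_L(s)$ contains the factor $1\times SL_2(q)$ and hence a transvection $u$ of order $2$ commuting with $s$ and lying in $\Omega$, so $|su|=2(q^2+1)(q\pm1)$. Inside $GL_3(q)\leqslant\Omega_6^+(q)$, acting with $V=V_6^+\perp V_2^-$, take $s=\lambda1_3\times t'$ where $\lambda$ has order $q-1$ and $t'$ has order $q+1$ on $V_2^-$; the regular unipotent $u$ of $GL_3(q)$ has order $4$, commutes with $s$ and (as any homomorphism $GL_3(q)\to\mathbb{Z}/2$ is trivial) lies in $\Omega$, so $|su|=4(q^2-1)$. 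Together with (a) and the unipotent of order $8$ this shows $\omega(L)$ contains all the claimed divisors. For the reverse inclusion one first observes $\omega(L)\subseteq\omega(Sp_8(q))$ (since $O_8^-(q)\leqslant Sp_8(q)$ for $q$ even), which by Lemma~\ref{l:spectrum_s_2} leaves only the possible ``extra'' values, namely numbers dividing $2(q^3\pm1)$, $4(q^2+1)$ or $8(q\pm1)$; the extra values dividing $2(q^3\pm1)$ involve a prime in $R_3(q)\cup R_6(q)$ nonadjacent to $p$ and are excluded by Lemma~\ref{l:adj_p} (the Zsigmondy exception $q=2$, $k=6$ being settled directly). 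An element of order divisible by $4$ and by some $r\in R_4(q)$ would, by the analysis in (c), force a $\Phi_4$-subspace of $V$ (of minus type) on which $s$ is regular and a unipotent supported on its plus-type complement whose Dickson invariant must vanish; since $\Omega_4^+(q)$ has no element of order $4$, this is impossible, ruling out the values dividing $4(q^2+1)$. The main obstacle is excluding the values dividing $8(q\pm1)$: one must show that no unipotent element of $L$ of order $8$ commutes with a nontrivial semisimple element of order $q\pm1$, equivalently that the reductive part of the centralizer of each such unipotent is trivial; this rests on the classification of unipotent classes of orthogonal groups in characteristic $2$ and of their centralizers, in particular on determining which order-$8$ unipotents actually lie in $\Omega_8^-(q)$ rather than only in $\Omega_8^+(q)$. (Alternatively the whole statement can be quoted from Buturlakin's determination of the spectra of orthogonal groups.) Assembling (a)--(c) gives $\omega(L)$ as the set of divisors of the listed integers.
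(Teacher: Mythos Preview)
The paper does not prove this lemma at all: its entire argument is the sentence ``The assertion follows from \cite[Corollary~4]{10But.t}.'' Your parenthetical remark at the very end --- that the whole statement can be quoted from Buturlakin --- is therefore exactly the paper's proof. Your direct computation is a genuinely different and much more hands-on route; the semisimple part via Lemma~\ref{l:semi} and the explicit constructions producing $2(q^2+1)(q\pm1)$ and $4(q^2-1)$ are correct and instructive, and the reduction to $\omega(Sp_8(q))$ together with Lemma~\ref{l:spectrum_s_2} is the natural way to bound things from above.

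However, the upper-bound argument as written has two real gaps. First, your claim that every ``extra'' divisor of $2(q^3\pm1)$ (i.e.\ one not already in the target list) must involve a prime from $R_3(q)\cup R_6(q)$ is false. Take $q=4$: then $18$ divides $2(q^3-1)=126$, but the only listed number with $3$-part equal to $9$ is the odd number $(q^2+q+1)(q^2-1)=315$, so $18$ is extra; yet $18$ contains no prime from $R_3(4)\cup R_6(4)=\{7,13\}$, and Lemma~\ref{l:adj_p} says nothing about it. The same obstruction (a $3$-part of $q^3\mp1$ strictly larger than that of $q\mp1$) occurs for every even $q$, via $2(q^3-1)$ when $3\mid q-1$ and via $2(q^3+1)$ when $3\mid q+1$. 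These cases require the centralizer analysis you set up in~(c) --- an order-$9$ element for $q=4$ acts irreducibly on a hyperbolic $6$-space with toral centralizer, leaving only a $2$-dimensional fixed space with no unipotents --- not Lemma~\ref{l:adj_p}. Second, you explicitly leave the exclusion of $8(q\pm1)$ unfinished, deferring to the classification of unipotent classes and their centralizers in even-characteristic orthogonal groups; that classification is precisely the content packaged in Buturlakin's corollary, so as it stands your direct route does not avoid the citation.
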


\begin{proof}
The assertion follows from \cite[Corollary 4]{10But.t}.
\end{proof}

\begin{lemma}\label{l:diff} Let $q$ be a power of a prime $p$.
\begin{enumerate}
 \item \label{i:bncn_diff} Let $q$ be odd and $n\geqslant 3$. Let $r=r_{2n-2}(q)$ if $q^{n-1}\equiv 1\pmod 4$ and $r=r_{n-1}(q)$ if $q^{n-1}\equiv -1\pmod 4$. Then
 $2pr\in\omega(S_{2n}(q))\setminus \omega(O_{2n+1}(q))$.
 \item \label{i:dnbn} If $n\geqslant 4$ and $(n,q)\neq (4,2)$, then $pr_{2n-2}(q)\in\omega(O_{2n+1}(q))\setminus\omega({}O^-_{2n}(q))$.
 If $n\geqslant 4$ is even, then $pr_{n-1}(q)\in\omega(O_{2n+1}(q))\setminus\omega({}O^-_{2n}(q))$.
 \item \label{i:s6o8} If $q>3$, then $(q^4-1)/(2,q-1)^2\in\omega(O^+_8(q))\setminus\omega(S_6(q))$.
 \item \label{i:o8s6} If $q$ is odd, then $p(q^2+1)\in\omega(S_{6}(q))\setminus\omega(O^+_{8}(q))$.
 \item \label{i:bncn}$\omega(O_{2n+1}(q))\subseteq\omega(S_{2n}(q))$ for all $n\geqslant 2$.
 \item \label{i:orth} $\omega(O_{2n-1}(q))\subseteq \omega(O_{2n}^\pm(q))\subseteq \omega(O_{2n+1}(q))$ for all $n\geqslant 3$.

\end{enumerate}
\end{lemma}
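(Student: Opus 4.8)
The plan is to establish each of the six items by exhibiting (or excluding) a suitable semisimple-times-unipotent element, using the explicit descriptions of the spectra collected in Lemmas~\ref{l:spectrum_s_odd}, \ref{l:spectrum_s_2}, \ref{l:semi}, \ref{l:spectrum_o8p}, \ref{l:spectrum_o8m} together with the adjacency restriction of Lemma~\ref{l:adj_p}. For item~(\ref{i:bncn}) I would simply observe that $O_{2n+1}(q)$ embeds as a subgroup of $S_{2n}(q)$ (the natural inclusion of the orthogonal group in the symplectic group on a space of one larger dimension, or when $q$ is even the isomorphism $O_{2n+1}(q)\simeq S_{2n}(q)$), whence $\omega(O_{2n+1}(q))\subseteq\omega(S_{2n}(q))$; item~(\ref{i:orth}) is analogous, using the chain of natural subgroups $O_{2n-1}(q)\leqslant O^\pm_{2n}(q)\leqslant O_{2n+1}(q)$. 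For item~(\ref{i:bncn_diff}) the point is that in $S_{2n}(q)$ one may take an element that is the product of a unipotent element of order $p$ acting on a $2$-dimensional subspace with a semisimple element of order $2r$ on a complementary $(2n-2)$-dimensional symplectic space (a torus of order $q^{n-1}+1$ in characteristic-appropriate form contributes the factor $2r$); using Lemma~\ref{l:spectrum_s_odd}(iii) with $k=1$ and $n_1=n-1$ one checks $2pr$ lies in $\omega(S_{2n}(q))$. That the same number is absent from $\omega(O_{2n+1}(q))$ follows because $d=2$ there forces a division by $2$, and one verifies from Lemma~\ref{l:spectrum_s_odd} that no other product form produces $2pr$; here the choice $r=r_{2n-2}(q)$ versus $r=r_{n-1}(q)$ according to the residue of $q^{n-1}$ mod $4$ is exactly what guarantees $r$ divides the relevant torus order but is a primitive prime divisor large enough to block the alternative expressions.

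For item~(\ref{i:dnbn}) the element $pr_{2n-2}(q)$ (respectively $pr_{n-1}(q)$ for even $n$) is obtained in $O_{2n+1}(q)$ as a unipotent element of order $p$ on a nondegenerate subspace times a semisimple element of order $r_{2n-2}(q)$ lying in a torus of order $(q^{n-1}+1)/(2,q-1)$-type on the complement, which is available by Lemma~\ref{l:spectrum_s_odd}(iii) once one notes $\omega_{p'}$ of the orthogonal group contains $r_{2n-2}(q)$. To see it is missing from $\omega(O^-_{2n}(q))$, I would invoke Lemma~\ref{l:adj_p}: with $k=2n-2$ even and $k/2=n-1>n-2$, any $r\in R_{2n-2}(q)$ satisfies $rp\notin\omega(O^-_{2n}(q))$; the even-$n$ case with $k=n-1$ odd and $n-1>n-2$ is the same. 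The condition $(n,q)\neq(4,2)$ is needed for the Zsigmondy prime $r_{2n-2}(q)=r_6(2)$ to exist. For items~(\ref{i:s6o8}) and~(\ref{i:o8s6}) I would read off both spectra directly: Lemma~\ref{l:spectrum_o8p}(i) puts $(q^4-1)/(2,q-1)^2$ into $\omega(O^+_8(q))$, while for $S_6(q)$ Lemma~\ref{l:spectrum_s_odd} (or Lemma~\ref{l:spectrum_s_2}) with $n=3$ shows the largest torus contributions are bounded by $[q^{n_1}\pm1,\dots]$ with $n_1+\dots+n_s=3$, and one checks $(q^4-1)/(2,q-1)^2$ exceeds or fails to divide every such number when $q>3$; dually, $p(q^2+1)$ appears in $\omega(S_6(q))$ via Lemma~\ref{l:spectrum_s_odd}(iii) with $k=1,n_1=2$, but $p(q^2+1)/(2,q-1)\notin\omega(O^+_8(q))$ by Lemma~\ref{l:spectrum_o8p} since the only unipotent-times-semisimple entries there are $p(q^2\pm1)/(2,q-1)$ and the prime-to-$p$ part of $q^2+1$ cannot absorb an extra factor matching $p(q^2+1)$ when $q$ is odd — here one must be slightly careful to compare $p(q^2+1)$ with $p(q^2+1)/(2,q-1)$, i.e.\ to check the parity obstruction genuinely excludes it.

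The main obstacle I anticipate is not any single embedding but the systematic verification, in items~(\ref{i:bncn_diff}), (\ref{i:dnbn}), (\ref{i:s6o8}), (\ref{i:o8s6}), that the proposed number is \emph{not} realised in the smaller-spectrum group. Each such non-membership claim requires sifting through all the parametrised families in Lemmas~\ref{l:spectrum_s_odd}--\ref{l:spectrum_o8m}, showing that no choice of partition $n_1+\dots+n_s$ and no choice of signs yields a multiple of the target, and in the characteristic-$2$ regime one also has the extra small cases (the exceptional $8$, $25$, etc.) to rule out; the divisibility bookkeeping with the factors $(2,q-1)$, $(4,q^n-\varepsilon)$, and $d$ is where errors would creep in, so I would organise the argument around the $2$-part and the primitive-prime-divisor content of the candidate order, using that $r_{2n-2}(q)$ and $r_{n-1}(q)$ are large enough to pin down which torus they can come from.
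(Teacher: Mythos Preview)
Your outline for items (\ref{i:bncn_diff}), (\ref{i:dnbn}), (\ref{i:s6o8}), (\ref{i:o8s6}) and (\ref{i:orth}) is essentially the paper's own argument: the paper proves (\ref{i:bncn_diff}) by exactly the primitive-divisor sieve you describe, handles (\ref{i:s6o8}) by the factorisation $a=(q^2+1)\cdot\frac{q+1}{(2,q-1)}\cdot\frac{q-1}{(2,q-1)}$, reads (\ref{i:o8s6}) off from Lemmas~\ref{l:spectrum_s_odd} and~\ref{l:spectrum_o8p}, and uses the subgroup chain $O_{2n-1}(q)<O_{2n}^\pm(q)<O_{2n+1}(q)$ for (\ref{i:orth}). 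For (\ref{i:dnbn}) the paper simply cites \cite[Proposition~3.1]{05VasVd.t}, whereas you recover the non-membership half internally from Lemma~\ref{l:adj_p}; that is a harmless and slightly more self-contained variant.

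There is, however, a genuine error in your treatment of (\ref{i:bncn}). You assert that $O_{2n+1}(q)$ embeds as a subgroup of $S_{2n}(q)$ via ``the natural inclusion of the orthogonal group in the symplectic group on a space of one larger dimension''. No such inclusion exists: the orthogonal space has dimension $2n+1$, not $2n-1$, and in fact for odd $q$ the simple groups $S_{2n}(q)$ and $O_{2n+1}(q)$ have the \emph{same} order while being non-isomorphic (for $n\geqslant 3$), so neither can be a subgroup of the other. The paper does not argue via an embedding here; it just refers to Lemma~\ref{l:spectrum_s_odd}. Indeed, for odd $q$ the lists in Lemma~\ref{l:spectrum_s_odd} for $S_{2n}(q)$ and $O_{2n+1}(q)$ differ only in clause~(iii), where the parameter $d$ equals $1$ for $S_{2n}(q)$ and $2$ for $O_{2n+1}(q)$; hence every maximal element order of $O_{2n+1}(q)$ divides the corresponding one for $S_{2n}(q)$, giving $\omega(O_{2n+1}(q))\subseteq\omega(S_{2n}(q))$. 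For even $q$ the two groups are isomorphic, as you note. Replacing your embedding claim by this direct comparison fixes the gap.
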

\begin{proof} (\ref{i:bncn_diff}) Let $q^{n-1}\equiv \varepsilon \pmod 4$. Then $2pr$ divides $p(q^{n-1}+\varepsilon)$, and in particular it belongs to $\omega(S_{2n}(q))$ by Lemma \ref{l:spectrum_s_odd}.
Suppose that $2pr\in \omega(O_{2n+1}(q))$. It follows by Lemma \ref{l:spectrum_s_odd} that $2pr$ divides either $p^k(q^{n_1}\pm 1)/2$ for some $k\geqslant 1$ and $n_1>0$ with $p^{k-1}+1+2n_1=2n$, or $p^k[q^{n_1}\pm 1,\dots,q^{n_s}\pm 1]$ for some $k\geqslant 1$,  $s\geqslant 2$, $n_1,\dots,n_s>0$ with $p^{k-1}+1+2n_1+\dots+2n_s=2n$.  In fact, by the definition of primitive divisor,  it cannot divide a number of the latter form since all $n_1,\dots,n_s$ are less than $n-1$.
And if $2pr$ divides $p^k(q^{n_1}+\tau)/2$  with $p^{k-1}+1+2n_1=2n$ and $\tau=\pm1$, then by the definition of primitive divisor we have that $k=1$, $n_1=n-1$ and $\tau=\varepsilon$. But then $(q^{n_1}+\tau)/2$ is odd, a contradiction.

(\ref{i:dnbn}) See \cite[Proposition 3.1]{05VasVd.t}.

(\ref{i:s6o8}) Let $a=(q^4-1)/(2,q-1)^2$. Lemma \ref{l:spectrum_o8p} implies that $a\in\omega(O^+_8(q))$. By Lemmas \ref{l:spectrum_s_odd} and \ref{l:spectrum_s_2}, the orders of semisimple elements of $S_6(q)$ are precisely divisors of $(q^3\pm1)/(2,q-1)$, $(q^2+1)(q\pm1)/(2,q-1)$ and $q^2-1$.
 It is clear that $a$ divides none of $(q^3\pm1)/(2,q-1)$ and $q^2-1$.
Furthermore, $$a=(q^2+1)\cdot \frac{(q+1)}{(2,q-1)}\cdot\frac{(q-1)}{(2,q-1)}.$$ Since $q>2$, both $(q-1)/(2,q-1)$ and $(q+1)/(2,q-1)$ are greater than one, and so $a$ does not divide $(q^2+1)(q\pm1)/(2,q-1)$ either.

(\ref{i:o8s6})--(\ref{i:bncn}) See Lemmas \ref{l:spectrum_s_odd} and \ref{l:spectrum_o8p}.

(\ref{i:orth}) It is well known that $O_{2n-1}(q)<O_{2n}^\pm(q)<O_{2n+1}(q)$, and the assertion follows.
\end{proof}

\begin{lemma} \label{l:s6} Let $q$ be even, $S=S_6(q)$ and $L=O_8^+(q)$. Suppose that $V$ is a nontrivial $S$-module over a field of characteristic $2$ and $H$ is a natural semidirect product of $V$ and $S$. Then $\omega(H)\not\subseteq \omega(L)$.
\end{lemma}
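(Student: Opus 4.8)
Since $V$ and $S$ lie in the same characteristic, Lemma~\ref{l:hh} does not apply, and the plan is instead to exhibit directly inside $H$ an element whose order is not a divisor of any of the numbers listed in Lemma~\ref{l:spectrum_o8p}, hence cannot lie in $\omega(L)$. Put $p=2$. The only fact we need about semidirect products is the element-order formula in a split extension by an elementary abelian $2$-group: writing $V$ additively, for $g\in S$ of order $m$ and $v\in V$ one has $(vg)^{m}=\bigl(1+g+\dots+g^{m-1}\bigr)v\in V$, so, since $V$ has exponent $2$ and the image of $vg$ in $H/V\cong S$ is $g$, we get $|vg|=2m$ as soon as $\bigl(1+g+\dots+g^{m-1}\bigr)v\neq 0$, and the $2$-part of $|vg|$ can be enlarged by letting $g$ carry a nontrivial unipotent part.

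First I would clear away the easy case and pin down the target numbers. If $V$ has a trivial $S$-submodule, then $H$ contains $\mathbb{F}_{2}\times S$; picking $r\in R_{3}(q)$ (nonempty by Lemma~\ref{l:zsig}) and an element of order $r$ in a subgroup $\mathrm{GL}_{3}(q)\leqslant S$ gives an element of order $2r$ in $H$, and $2r\notin\omega(L)$ since $r$ divides neither $q^{2}-1$ nor $q^{2}+1$ (otherwise $r\mid q^{4}-1$, against $r\in R_{3}(q)$); Lemma~\ref{l:spectrum_o8p} then finishes this case. Hence we may assume $V$ has no trivial submodule and, after replacing $V$ by a minimal submodule, that $V$ is irreducible; then its kernel is a proper normal subgroup of the simple group $S$, so $V$ is faithful. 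From Lemma~\ref{l:spectrum_o8p} with $p=2$ one reads off that every element of $\omega(L)$ divisible by $4$ divides $4(q-1)$, $4(q+1)$ or $8$; in particular $4d\notin\omega(L)$ for odd $d\nmid q^{2}-1$, $2d\notin\omega(L)$ for odd $d$ dividing neither $q^{2}-1$ nor $q^{2}+1$, and $16\notin\omega(L)$.

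The central construction is the following. Take $g=su$ with $s\in S$ of odd order $m$ and $u$ an involution in $C_{S}(s)$, so $|g|=2m$; for $v\in C_{V}(s)$ the element $g$ acts on $v$ as $u$, whence $\bigl(1+g+\dots+g^{2m-1}\bigr)v=\bigl(\textstyle\sum_{i=0}^{2m-1}u^{i}\bigr)v=m(1+u)v=(1+u)v$. Thus if $C_{V}(s)\neq 0$ and $u$ does not centralise $C_{V}(s)$, then for a suitable $v\in C_{V}(s)$ we have $(vg)^{2m}\neq 1$, and as $|vg|$ divides $4m$ and is a multiple of $2m$ this forces $|vg|=4m$. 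I would run this with $m=q^{2}+1$: choose $\mathrm{Sp}_{2}(q)\times\mathrm{Sp}_{4}(q)\leqslant S$, an element $s$ of order $q^{2}+1$ in the $\mathrm{Sp}_{4}(q)$-factor and an involution $u$ in the $\mathrm{Sp}_{2}(q)$-factor; then, provided $C_{V}(s)\neq 0$ with $u$ acting nontrivially on it, $H$ has an element of order $4(q^{2}+1)\notin\omega(L)$. For the natural module this works at once, since there $C_{V}(s)$ is the $2$-dimensional summand on which $\mathrm{Sp}_{4}(q)$ acts trivially and $u$ acts faithfully. A second candidate is $s$ of order $r\in R_{3}(q)$ in the Levi $\mathrm{GL}_{3}(q)$ of the Siegel parabolic: such an $s$ lies in $\mathrm{SL}_{3}(q)$ (as $r\mid q^{2}+q+1$ makes the norm $N_{\mathbb{F}_{q^{3}}/\mathbb{F}_{q}}$ trivial on $\langle s\rangle$), so it fixes a line in any module whose $\mathrm{GL}_{3}(q)$-restriction contains a $\det^{\pm1}$-summand — this handles, for instance, the $8$-dimensional module, giving an element of order $2r\notin\omega(L)$.

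The hard part, which I expect to be the main obstacle, is to show that for \emph{every} nontrivial irreducible $\mathbb{F}_{2}S$-module $V$ one of these configurations (or a variant) applies: that there is $s\in S$ of odd order $m$ with $m$ dividing neither $q-1$ nor $q+1$, with $C_{V}(s)\neq 0$, and with either $s$ itself already sufficing (giving order $2m$) or some involution of $C_{S}(s)$ moving $C_{V}(s)$ (giving order $4m$). I would approach this via the Steinberg tensor product theorem, reducing to restricted irreducibles and then to a weight computation for the tori of $S$ containing the elements above — equivalently, restricting $V$ to $\mathrm{Sp}_{2}(q)\times\mathrm{Sp}_{4}(q)$ and to $\mathrm{GL}_{3}(q)$ and using Clifford theory together with the faithfulness of the $\mathrm{Sp}_{2}(q)$-action. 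When no purely semisimple $s$ works, one inflates the $2$-part instead by taking $g=tu$ with $t$ semisimple of small order (acting trivially off a small nondegenerate subspace, so $C_{V}(t)\neq 0$) and $u$ unipotent with a long Jordan block on $C_{V}(t)$; this is enough because $\omega(L)$ contains no multiple of $8$ other than $8$ and no element whose $2$-part exceeds $8$. Finally the few smallest cases, essentially $q=2$ — where the nontrivial irreducible $\mathbb{F}_{2}S_{6}(2)$-modules form a short explicit list — are checked directly; e.g.\ the $8$-dimensional one via an element of order $7$, which has a nonzero fixed space and an involution-free centraliser, so that the semidirect product contains an element of order $14\notin\omega(O_{8}^{+}(2))$. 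Assembling these cases yields $\omega(H)\not\subseteq\omega(L)$.
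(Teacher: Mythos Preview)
The paper's proof is a one-line citation: it quotes \cite[Lemma 4.1]{15Gr}, where it is shown that $\omega(H)$ contains at least one of $16$, $24$, $2(q^{2}+q+1)$, and then observes via Lemma~\ref{l:spectrum_o8p} that none of these lie in $\omega(L)$. So the actual representation-theoretic work is done elsewhere.

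Your approach is different: you try to produce such an element directly. The preliminary steps are sound --- the reduction to a faithful irreducible module, the identification of the forbidden orders $2r$ for $r\in R_3(q)$, $4(q^{2}+1)$ and $16$, and the element-order formula $(vg)^{m}=(1+g+\cdots+g^{m-1})v$ are all correct and useful. The target orders you aim for are essentially the same as the ones in \cite{15Gr}: your $2r$ divides their $2(q^{2}+q+1)$, and your $4(q^{2}+1)$ and large $2$-parts correspond to their $24$ and $16$.

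However, the proposal is not a proof but a programme. You yourself flag the central difficulty: one must show, for \emph{every} faithful irreducible $\mathbb{F}_{2}S$-module $V$, that some semisimple $s$ (of order $q^{2}+1$ or $r\in R_{3}(q)$) has $C_{V}(s)\neq 0$ with an involution of $C_{S}(s)$ acting nontrivially on it, or else that a suitable unipotent element has a Jordan block of length at least $16$ on $V$. You do not carry this out; you only indicate that the Steinberg tensor product theorem and a weight computation for the restrictions to $\mathrm{Sp}_{2}(q)\times\mathrm{Sp}_{4}(q)$ and $\mathrm{GL}_{3}(q)$ should work, together with an ad hoc check at $q=2$. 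This is exactly the substance of the argument in \cite{15Gr}, and it is nontrivial: for instance, there are restricted irreducibles (and twisted tensor products of them) for which the element of order $q^{2}+1$ has no fixed points, so one genuinely has to fall back on the $R_{3}(q)$-element or on the $2$-part, and verifying which case applies requires an honest case analysis over highest weights. Until that analysis is written down, the argument has a gap precisely at its main step.
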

\begin{proof}
In the proof of Lemma 4.1 in \cite{15Gr}, it was established that  $\omega(H)$ contains at least
one of the numbers $16$, $24$, and $2(q^2+q+1)$. By Lemma \ref{l:spectrum_o8p}, none of these numbers belong to $\omega(L)$.
\end{proof}

\section{Proof of Theorem \ref{t:quasi}}

As we mentioned in the Introduction, the starting point for our proof of Theorem \ref{t:quasi} is the~following assertion.

\begin{lemma}\label{l:previous}
Let $q$ be a power of a prime $p$, $L$ one of the groups $S_{2n}(q)$, where $n\geqslant 2$ and
$(n,q)\not\in\{(2,2),(2,3)\}$, $O_{2n+1}(q)$, where $n\geqslant3$, or $O^\pm_{2n}(q)$, where $n\geqslant 4$, and let $G$ be a finite group with $\omega(G)=\omega(L)$.
Suppose that some nonabelian composition factor $S$ of $G$ is a group of Lie type
over a field of characteristic $p$. If $S\not\simeq L$, then one of the following holds:
\begin{enumerate}
\item $L=S_4(q)$ and $S=L_2(q^2)$;

\item $\{L, S\}\subseteq \{S_6(q), O_7(q),  O^+_8(q)\}$;

\item $\{L, S\}\subseteq \{S_{2n}(q), O_{2n+1}(q), {}O^-_{2n}(q)\}$ and $n\geqslant 4$;

\item $L=O^+_{2n}(q)$, $S\in\{S_{2n-2}(q), O_{2n-1}(q)\}$, and $n\geqslant 6$ is even.
\end{enumerate}
\end{lemma}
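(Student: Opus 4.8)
\medskip
\noindent\textbf{Plan of proof.} The first step is to reduce the statement to a comparison of spectra. For any finite group $H$ with a normal subgroup $N$ one has $\omega(H/N)\subseteq\omega(H)$: if $hN$ has order $k$, then $k$ divides $|\langle h\rangle|$, so the cyclic subgroup $\langle h\rangle\leqslant H$ contains an element of order $k$. Taking a composition series of $G$ and writing $S$ as one of its factors $M/N$, where $N$ is normal in $M\leqslant G$, we get $\omega(S)=\omega(M/N)\subseteq\omega(M)\subseteq\omega(G)=\omega(L)$. Thus the assertion follows once one classifies the finite nonabelian simple groups $S$ of Lie type over a field of characteristic $p$ satisfying $\omega(S)\subseteq\omega(L)$; this classification is \cite[Theorem~3]{09VasGrMaz.t}, and its symplectic and orthogonal entries are exactly $S\simeq L$ together with cases (i)--(iv).

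It remains to indicate how such a classification is proved. First, unipotent elements bound the Lie rank of $S$: for a group of Lie type the largest power of $p$ in its spectrum is an explicit, essentially logarithmic, increasing function of the rank (for $L$ symplectic or odd-dimensional orthogonal this top power equals $p^{1+\lfloor\log_p(2n-1)\rfloor}$ when $p$ is odd and $2^{2+\lfloor\log_2(n-1)\rfloor}$ when $L=S_{2n}(q)$ and $p=2$, as one reads off from Lemmas~\ref{l:spectrum_s_odd} and~\ref{l:spectrum_s_2}, with analogous estimates in general), while for exceptional types it is bounded. Since $\omega(S)\subseteq\omega(L)$, the top $p$-power of $S$ cannot exceed that of $L$, which bounds the rank of $S$ (in terms of $n$, and of $p$ when $S$ is linear or unitary), leaving only classical groups $S$ of bounded natural dimension and finitely many exceptional types. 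Second, primitive prime divisors pin down the field and type of $S$. The spectra of these groups are governed by their maximal tori, whose orders are, up to small $2$-parts and greatest common divisors, products $\prod_i(q^{m_i}\mp1)$ with $\sum_i m_i$ bounded by the rank, exactly as recorded in Lemmas~\ref{l:spectrum_s_odd}, \ref{l:spectrum_s_2}, \ref{l:semi}, \ref{l:spectrum_o8p} and~\ref{l:spectrum_o8m}. Using Zsigmondy's Lemma~\ref{l:zsig}, so that each relevant torus order of $S$ carries a primitive prime $r_j(p)$ for various $j$ bounded explicitly in terms of the dimension and field of $S$, the inclusion $\pi(S)\subseteq\pi(L)$ forces the set of these indices $j$ for $S$ into the corresponding explicit set for $L$; combined with the non-adjacencies to $p$ from Lemmas~\ref{l:adj_p} and~\ref{l:semi} and the rank bound above, this confines $q_S$ to a bounded power of $q_L$ (ultimately $q_L$ or $q_L^{2}$) and the type of $S$ to a short list. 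A finite residual analysis over the few remaining small cases, using \cite{85Atlas} and Lemmas~\ref{l:spectrum_o8p}--\ref{l:spectrum_o8m}, completes the classification.

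The main obstacle is this last step: separating, by spectrum alone, the classical groups that share the characteristic, have equal or closely related base fields, and have almost equal rank. Their spectra are nearly the same, and this is precisely why the conclusion cannot read simply ``$S\simeq L$'', but must allow the chains $\{S_{2n}(q),O_{2n+1}(q),O^-_{2n}(q)\}$ and $\{O^+_{2n}(q),S_{2n-2}(q),O_{2n-1}(q)\}$, together with the small-rank accidents coming from the exceptional isomorphisms $S_4(q)\cong O_5(q)$ and $L_2(q^2)\cong O^-_4(q)$ (which give $L=S_4(q)$, $S=L_2(q^2)$ via $O^-_4(q)\leqslant O_5(q)$) and from triality on $D_4$ (which gives the $\{S_6(q),O_7(q),O^+_8(q)\}$ triangle). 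Deciding, inside each such cluster, which spectral inclusions genuinely hold — for instance why $S_{2n-2}(q)$ occurs as a candidate $S$ only for $L=O^+_{2n}(q)$ with $n$ even and not for $L=O^-_{2n}(q)$, and where the parity restriction and the exclusions $(n,q)\in\{(2,2),(2,3)\}$ originate — amounts to a delicate bookkeeping of which numbers $q^m\mp1$ divide which $[q^{m_1}\mp1,\dots,q^{m_s}\mp1]$, i.e.\ of the sign patterns appearing in Lemma~\ref{l:semi} against those in Lemmas~\ref{l:spectrum_s_odd} and~\ref{l:spectrum_s_2}. Everything else in the argument is a rank or field estimate, or a finite verification.
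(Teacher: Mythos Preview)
Your approach is the same as the paper's: reduce to the observation $\omega(S)\subseteq\omega(L)$ and then invoke the existing classification. The paper's proof is one sentence, ``The assertion is a combination of \cite[Theorem~3]{09VasGrMaz.t} and \cite[Theorem~1]{12Sta.t}.'' You have reproduced the first half of this and added a useful outline of how the argument in \cite{09VasGrMaz.t} runs, but you have omitted the second citation. That omission is not cosmetic: \cite[Theorem~3]{09VasGrMaz.t} does \emph{not} by itself yield exactly the list (i)--(iv) across the full range of $n$ in the hypothesis; the small-rank cases $L\in\{S_6(q),O_7(q),O^\pm_8(q)\}$ are completed in Staroletov's paper \cite{12Sta.t}. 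So your sentence ``this classification is \cite[Theorem~3]{09VasGrMaz.t}, and its symplectic and orthogonal entries are exactly $S\simeq L$ together with cases (i)--(iv)'' overstates what that single reference delivers. With the second citation added, your proof is correct and matches the paper's.
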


\begin{proof}
The assertion is a combination of \cite[Theorem 3]{09VasGrMaz.t} and \cite[Theorem 1]{12Sta.t}.
\end{proof}

Let $q=p^m$ and let $L$ be one of the groups $S_{2n}(q)$, where $n\geqslant 2$ and
$(n,q)\not\in\{(2,2),(2,3)\}$, $O_{2n+1}(q)$, where $n\geqslant3$, or $O^\pm_{2n}(q)$, where $n\geqslant 4$. Let $G$ be a finite group with $\omega(G)=\omega(L)$. By Lemma \ref{l:structure}, we have
$$S\leqslant G/K\leqslant \Aut S,$$ where $K$ is the soluble radical of $G$ and $S$ is a nonabelian simple group. Suppose that
$S$ is a group of Lie type over a field of characteristic $p$. By Lemma \ref{l:previous}, either $S\simeq L$, or $L$ and $S$ are as in the conclusion of Lemma \ref{l:previous}.

Let $L=S_4(q)$, where $q>3$. If $q=3^{m}$ and $m$ is odd, then $G\simeq L$ by \cite{02Maz.t}, and so $S\simeq L$. If $q\neq 3^{m}$ with $m$ odd, then $S=L_2(q^2)$ by Lemma \ref{l:previous}, and this is (\ref{i:s4}) of the conclusion of  Theorem \ref{t:quasi}. To examine the other cases, we need some auxiliary results.

\begin{lemma}\label{l:out}
Let $L\neq S_4(q)$. If $k>2$ and $r\in R_{mk}(p)$, then  $r\not\in\pi(\Out S)$.
\end{lemma}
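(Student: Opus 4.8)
The plan is to use the classification of $L$ and $S$ from Lemma~\ref{l:previous} to pin down what $\Out S$ can be, and then argue that a primitive prime divisor $r\in R_{mk}(p)$ with $k>2$ is too large to divide the (field, diagonal, and graph) automorphism contributions. First I would observe that, since $L\neq S_4(q)$, the pairs $(L,S)$ given by Lemma~\ref{l:previous} all involve symplectic or orthogonal groups over $\mathbb{F}_q$ with $q=p^m$; in every case $S$ is a classical group of the form $B_\ell(q)$, $C_\ell(q)$, $D_\ell(q)$ or ${}^2D_\ell(q)$ for some bounded $\ell$, with underlying field $\mathbb{F}_q$. The key numerical point is that for such $S$ the group $\Out S$ is a subgroup of $\Outdiag(S)\rtimes(\langle\phi\rangle\times\langle\gamma\rangle)$, where the diagonal part $\Outdiag(S)$ has order dividing $4$, the graph automorphism group $\langle\gamma\rangle$ has order dividing $6$ (order $2$ except for $D_4$, where it is $S_3$ of order $6$), and the field automorphism group $\langle\phi\rangle$ is cyclic of order $m$ (the field being $\mathbb{F}_q=\mathbb{F}_{p^m}$). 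Hence $|\Out S|$ divides $24m$.

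Next I would bound the primes dividing $24m$. Any prime $r\in R_{mk}(p)$ is a primitive prime divisor of $p^{mk}-1$, so by definition $r\equiv 1\pmod{mk}$ and in particular $r>mk$; since $k>2$ we get $r>2m$, hence $r$ certainly does not divide $m$. It remains to rule out $r\in\pi(24)=\{2,3\}$. If $r=2$ then $mk\mid r-1=1$, impossible; if $r=3$ then $mk\mid 2$, contradicting $k>2$. Therefore $r\nmid 24m$, so $r\notin\pi(\Out S)$, which is exactly the assertion.

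The one point requiring care — and the step I expect to be the main obstacle — is making sure the field of definition of $S$ really is $\mathbb{F}_{p^m}$ (so that the field automorphism group has order exactly $m$, not some multiple), and correctly accounting for the graph automorphisms in the triality case $S=O_8^+(q)=D_4(q)$, where $\Out S$ can be as large as $D_{12}$ in order and one must also incorporate $\Outdiag$. In that case $|\Out S|$ divides $|\Outdiag(D_4(q))|\cdot|{\rm S}_3|\cdot m = 4\cdot 6\cdot m=24m$, so the bound above still holds, and the argument goes through unchanged. For the remaining pairs the field automorphism contributes $m$, the diagonal part divides $4$, and the ordinary graph automorphism divides $2$, so $|\Out S|\mid 8m$, which is even more favourable. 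One should also note that each $S$ arising here is genuinely a group of Lie type over $\mathbb{F}_q$ and not accidentally realisable over a larger field, which follows since these are classical groups in their natural matrix dimension.
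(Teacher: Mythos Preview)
Your argument is correct and essentially identical to the paper's proof: both observe that $S$ is a symplectic or orthogonal group over $\mathbb{F}_{p^m}$, so $\pi(\Out S)\subseteq\{2,3\}\cup\pi(m)$, and then use $mk\mid r-1$ (Fermat's Little Theorem) to conclude $r>3$ and $r>m$. Your exposition simply unpacks the structure of $\Out S$ in more detail; the only minor slip is that for $S={}^2D_\ell(q)$ the field-automorphism contribution has order $2m$ rather than $m$ (the graph involution being absorbed into the Frobenius), but this does not affect your bound $|\Out S|\mid 24m$ or the conclusion.
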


\begin{proof}
Since $S$ is a symplectic or orthogonal group over a field of order $q=p^m$, it follows that
$\pi(\Out S)\subseteq \{2,3\}\cup\pi(m)$. By Fermat's Little Theorem $mk$ divides $r-1$ and so $r>3$ and $r>m$.
\end{proof}

\begin{lemma}\label{l:bncn}
If $L=S_{2n}(q)$, where $n\geqslant 3$ and $q$ is odd, then $S\neq O_{2n+1}(q)$.
\end{lemma}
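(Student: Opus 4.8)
The plan is to exploit the spectral asymmetry between $S_{2n}(q)$ and $O_{2n+1}(q)$ recorded in Lemma~\ref{l:diff}(\ref{i:bncn_diff}), which gives an element order $2pr \in \omega(S_{2n}(q)) \setminus \omega(O_{2n+1}(q))$, where $r = r_{2n-2}(q)$ or $r = r_{n-1}(q)$ according to the congruence class of $q^{n-1}$ modulo $4$. Since $\omega(G) = \omega(L) = \omega(S_{2n}(q))$, the number $2pr$ lies in $\omega(G)$; the goal is to derive a contradiction from the assumption that $S = G/K$ modulo radical is (a group with socle) $O_{2n+1}(q)$, by showing that $2pr$ cannot be realized as an element order in any group $G$ with $S \leqslant G/K \leqslant \Aut S$.

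First I would locate the primes $2$, $p$, $r$ in the prime graph. The prime $r$ is a primitive prime divisor of $q^{2(n-1)}-1$ or $q^{n-1}-1$, so in particular $r > 2n-2 \geqslant n$, hence $r \nmid m$ and $r > 3$; by Lemma~\ref{l:out} (applicable since $L \neq S_4(q)$ as $n \geq 3$) we get $r \notin \pi(\Out S)$, and similarly one checks $r$ is coprime to $|K|$ — here I would invoke Lemma~\ref{l:structure}(iii) after verifying that $r$ is nonadjacent to $2$ in $GK(G)$, which follows because $2r \notin \omega(O_{2n+1}(q))$ by the same kind of primitive-divisor analysis as in the proof of Lemma~\ref{l:diff}(\ref{i:bncn_diff}) (any element of order $2r$ in $O_{2n+1}(q)$ would force $r$ to divide a number $p^k[q^{n_1}\pm1,\dots]$ or $p^k(q^{n_1}\pm1)/2$ with all $n_i$ too small, or a semisimple part $[q^{n_1}\pm1,\dots]$ with $n_1+\dots+n_s = n$ — impossible). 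Consequently an element of order $2pr$ in $G$ must come from $S$ itself: its image in $\overline G = G/K$ has order divisible by $r$, so it maps into $S$, and since $r$ is coprime to $|\overline G/S|$ we may work inside $S \cong O_{2n+1}(q)$ (or a group between it and $\Aut S$, but the $r$-part is inner).

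The crux is then the following: in $S = O_{2n+1}(q)$ there is an element of order $r$ lying in a torus of a specific shape (coming from a block of size $2n-2$ or $n-1$), and I must show that its centralizer, together with the available unipotent and $2$-elements, cannot produce an element of order $2pr$ — precisely the statement $2pr \notin \omega(O_{2n+1}(q))$ which Lemma~\ref{l:diff}(\ref{i:bncn_diff}) already gives us directly. So in fact the argument can be made very short: $2pr \in \omega(G) = \omega(S_{2n}(q))$, while if $S = O_{2n+1}(q)$ then (since $r$ is coprime to $|K|\cdot|\overline G/S|$ and $2pr$ would have to be a product of an $r$-element with a $\{2,p\}$-element all visible in $S$) one shows $2pr \in \omega(O_{2n+1}(q))$, contradicting Lemma~\ref{l:diff}(\ref{i:bncn_diff}). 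The main obstacle is the bookkeeping that lifts ``$2pr \in \omega(G)$'' down to ``$2pr \in \omega(S)$'': one must check that not only $r$ but effectively the whole relevant element order survives the quotient, which requires confirming $r \nmid |K|$ via the nonadjacency $2 \not\sim r$, and handling the possibility that the $p$-part of the element lies partly in $K$ — but $p \in \pi(S)$ and a Hall-type / coprime-action argument (or direct use of the fact that a $\{2,p,r\}$-element with $r$ inner centralizes a Sylow structure inside $S$) closes this. I expect no serious difficulty beyond assembling these standard reductions.
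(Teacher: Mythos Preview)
Your approach has a genuine gap at its central step. You claim that $r$ is nonadjacent to $2$ in $GK(G)$, but this is false: since $2pr\in\omega(L)=\omega(G)$, certainly $2r\in\omega(G)$, so $2$ and $r$ \emph{are} adjacent in $GK(G)$. (Your argument that ``$2r\notin\omega(O_{2n+1}(q))$'' is also incorrect --- for example $[q^{n-1}+1,q\pm1]$ is an even semisimple order in $O_{2n+1}(q)$ divisible by $r_{2n-2}(q)$ --- but in any case $GK(G)$ is governed by $\omega(L)$, not by $\omega(S)$.) Hence Lemma~\ref{l:structure}(iii) cannot be invoked, and you have no mechanism for controlling $|K|$.

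The paper sidesteps this by quoting an external result (\cite[Proposition~1.3]{15Gr}) which gives $K=1$ outright in this configuration, so that $S\leqslant G\leqslant\Aut S$. From there, however, your proposed reduction ``$2pr\in\omega(G)\Rightarrow 2pr\in\omega(S)$'' goes in the wrong direction: Lemma~\ref{l:diff}(\ref{i:bncn_diff}) says precisely that $2pr\notin\omega(S)$, so an element $g$ of order $2pr$ must lie in $G\setminus S$, and no Hall or coprime-action bookkeeping will push it into $S$. The actual work, which your proposal does not attempt, is a case analysis on the outer part. The paper shows $r\notin\pi(G/S)$ (your use of Lemma~\ref{l:out} here is fine), then splits on whether $p\in\pi(G/S)$: if so, a field automorphism of order $p$ has centralizer containing $O_{2n+1}(q_0)$ with $q=q_0^{\,p}$, and one manufactures an order in $\omega(G)\setminus\omega(L)$ from that centralizer; if not, then $g^{pr}$ is an outer involution $t$ with $pr\in\omega(C_S(t))$, and one rules out both $t\in\operatorname{Inndiag}S$ (since $SO_{2n+1}(q)$ contains elements of order $q^n+1\notin\omega(L)$) and $t$ a field involution (since then $C_S(t)\leqslant SO_{2n+1}(q^{1/2})$ has no element of order $r$). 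This centralizer analysis of outer automorphisms is the missing idea.
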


\begin{proof}
Assume the contrary. It follows by \cite[Proposition 1.3]{15Gr} that $K=1$ in this case, and hence $S\leqslant G\leqslant \Aut S$. Let  $r=r_{2(n-1)m}(p)$ if $q^{n-1}\equiv 1 \pmod 4$ and $r=r_{(n-1)m}(p)$ otherwise. By Lemma \ref{l:diff}, there is an element $g$ of order $2pr$ in $G\setminus S$. By Lemma \ref{l:out}, we have $r\not\in\pi(G/S)$.

Suppose that $p\in\pi(G/S)$. Then $G$ contains a field automorphism of $S$ of order $p$. By \cite[Proposition 4.9.1(a)]{98GorLySol}, the centralizer of this automorphism in $S$ includes $O_{2n+1}(q_0)$, where $q=q_0^p$.
Therefore $pr_{2n}(q_0)$ and $p[q_0+1, q_0^{n-1}\pm 1]$ lie in $\omega(G)$. If $p$ does not divide $n$, then $r_{2n}(q_0)\in R_{2n}(q)$ and so $pr_{2n}(q_0)\not\in\omega(L)$ by \cite[Proposition 3.1]{05VasVd.t}.
Let $p$ divide $n$. Then $r_{2n-2}(q_0)\in R_{2n-2}(q)$. By the definition of primitive divisor and Lemma \ref{l:spectrum_s_odd},
it follows that $p(q_0+1)r_{2n-2}(q_0)\in\omega(L)$ if and only if $q_0+1$ divides $q^{n-1}+1$, which implies that $n$ is even. But then $q_0+1$ does not divide $q^{n-1}-1$, and by the same reasoning we conclude that
$p(q_0+1)r_{n-1}(q_0)\not\in\omega(L)$. In any case, $\omega(G)\not\subseteq\omega(L)$, a contradiction.

Thus $p,r\not\in\pi(G/S)$, and it follows that $g^{pr}\not\in S$ and $g^2\in S$.  Therefore $G\setminus S$ contains an involution $t=g^{pr}$ such that $pr\in\omega(C_S(t))$.
Suppose that $t\in \operatorname {Inndiag}S$. Then $\operatorname {Inndiag}S\leqslant G$ and since $\operatorname {Inndiag}S\simeq SO_{2n+1}(q)$, it follows that $q^n+1\in\omega(G)\setminus\omega(L)$, which is impossible.
Therefore, by \cite[Proposition 4.9.1(d)]{98GorLySol}, we have that $t$ is a field automorphism,  and hence $m$ is even. In particular $q\equiv 1\pmod 4$ and $r=r_{2(n-1)m}(p)$.
By  \cite[Prop 4.9.1(a,b)]{98GorLySol}, the centralizer $C_S(t)$ can be embedded into $SO_{2n+1}(q_0)$, where $q=q_0^2$, and so $C_S(t)$ has no elements of order $r$, a contradiction.
\end{proof}

\begin{lemma}\label{l:action2}  Suppose $\pi(G)$ contains four different primes $p$, $s$, $r$, and $w$ with the following properties:

\begin{enumerate}
 \item $S$ contains a Frobenius subgroup $F$ whose kernel is a $p$-group and whose complement has order $s$;
 \item $r\in \pi(K)$;
 \item $\{s,r,w\}$ is a coclique in $GK(G)$.
\end{enumerate}
Then $p\in\pi(K)$ and $pt\in \omega(G)$ for every $t\in\pi(S)\setminus\{p\}$.
\end{lemma}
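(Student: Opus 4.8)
The plan is to deduce both claims from Lemma~\ref{l:structure} together with Lemma~\ref{l:action}, working with the soluble radical $K$ and the socle $S$ of $\overline G=G/K$.

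\emph{Arithmetic setup.} Since $\{s,r,w\}$ is a coclique of size three in $GK(G)$ and $r\in\pi(K)$ divides $|K|\cdot|\overline G/S|$, Lemma~\ref{l:structure}(ii) shows that $s$ and $w$ are coprime to $|K|\cdot|\overline G/S|$; in particular $s,w\in\pi(S)$, $s\nmid|K|$, and $rs,sw,rw\notin\omega(G)$. Write $R$ for the preimage of $S$ in $G$, so $K\leqslant R\trianglelefteq G$ and $R/K\simeq S$; since $S$ is the unique minimal normal subgroup of $\overline G$, any normal subgroup of $\overline G$ either contains $S$ or is trivial.

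\emph{First claim: $p\in\pi(K)$.} Assume not. As $p,s\notin\pi(K)$ we have $(|F|,|K|)=1$, so by Schur--Zassenhaus the preimage of $F$ in $G$ contains a copy $\widetilde F=\widetilde P\rtimes\widetilde C\simeq F$ with $\widetilde P$ a nontrivial $p$-group, $|\widetilde C|=s$, acting coprimely on $K$. The decisive alternative is whether $\widetilde P$ centralizes $K$. If it does, then $\widetilde P\leqslant C_G(K)$: if $C_G(K)K/K$ is trivial then $\widetilde P\leqslant Z(K)$, impossible since $p\nmid|K|$; if it contains $S$, then $R=C_R(K)K$, whence $R/Z(K)\simeq S\times(K/Z(K))$, and the order-$s$ subgroup $\widetilde C\leqslant R$ must have nontrivial projection to $K/Z(K)$ --- otherwise $\widetilde C$ centralizes $K$ and $rs\in\omega(G)$ --- forcing an element of order $s$ into $K/Z(K)$, against $s\nmid|K|$. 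If $\widetilde P$ does not centralize $K$, then (a coprime automorphism of a soluble group centralizing its Fitting subgroup being trivial) $\widetilde P$ acts nontrivially on $O_t(K/O_{t'}(K))$ for some prime $t\in\pi(K)$, and Lemma~\ref{l:action} applied to $\langle O_t(K/O_{t'}(K)),\widetilde F\rangle$ --- a group with normal $t$-subgroup and Frobenius quotient $\simeq F$ whose kernel $P$ is a $p$-group coprime to $t$ --- yields $ts\in\omega(G)$; for $t=r$ this contradicts the coclique. I expect the main obstacle to be exactly this: forcing the prime $t$ produced by Lemma~\ref{l:action} to be $r$. The idea is to run this analysis along the Fitting series of $K$ and to use that $\widetilde C$ (a complement of order $s$ with $rs\notin\omega(G)$) has no nontrivial fixed $r$-element in $K$, so that the part of $K$ on which $\widetilde P$ acts nontrivially is, up to $r$-groups, centralized by $\widetilde P$ --- which brings us back to the (impossible) case that $\widetilde P$ centralizes $K$.

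\emph{Second claim.} With $p\in\pi(K)$ we have $O_{p'}(R)=O_{p'}(K)$ and $Q:=O_p(K/O_{p'}(K))=F(K/O_{p'}(K))$ a nontrivial normal $p$-subgroup of $\widehat G:=G/O_{p'}(K)$. Splitting again on $C_{\widehat G}(Q)$: if it contains $S$ modulo $K/O_{p'}(K)$, then with $\widehat R$ the preimage of $S$ the product $QC_{\widehat R}(Q)$ is central, $C_{\widehat R}(Q)$ is a central $p$-cover of $S$, and lifting an order-$t$ element of $S$ and multiplying by an element of order $p$ of $Q$ gives an element of order $pt$ for each $t\in\pi(S)\setminus\{p\}$; if $C_{\widehat G}(Q)$ is trivial modulo $K/O_{p'}(K)$, then $\widehat G$ acts on $Q$ with $S$ as a composition factor lying above the soluble radical, and for each $t\in\pi(S)\setminus\{p\}$ one produces an order-$pt$ element from a $t$-element of $S$ fixing a nonzero vector on a suitable chief factor of $Q$. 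In either case the element lies in $\omega(\widehat G)\subseteq\omega(G)$, completing the proof.
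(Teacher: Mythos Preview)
Your proposal has genuine gaps in both parts, and you in fact flag the first one yourself.

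\textbf{First claim.} You correctly identify the obstacle: applying Lemma~\ref{l:action} along the Fitting series of $K$ produces $ts\in\omega(G)$ for \emph{some} prime $t\in\pi(K)$, not necessarily for $t=r$, so no contradiction to the coclique follows. The paper's fix is decisive and simple: replace the Fitting series by an \emph{$r$-series}
\[
1=R_0\leqslant K_1\leqslant R_1\leqslant\cdots\leqslant K_l\leqslant R_l=K,
\]
with $K_i/R_{i-1}=O_{r'}(K/R_{i-1})$ and $R_i/K_i=O_r(K/K_i)$. One first argues that the top $r$-slice $K/K_l$ must be trivial (otherwise Lemma~\ref{l:action}, applied to the image of $F$ and the $r$-group $K/K_l$, yields $rs\in\omega(G)$). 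Then in $\tilde G=G/K_{l-1}$ one has $\tilde R=R_{l-1}/K_{l-1}$ an $r$-group with $C_{\tilde G}(\tilde R)\leqslant\tilde R$; if $p\nmid|\tilde K|$ one lifts $F$ by Schur--Zassenhaus and again obtains $rs\in\omega(G)$ via Lemma~\ref{l:action}. The point of the $r$-series is precisely that every invocation of Lemma~\ref{l:action} is against an $r$-group, so the output prime is forced to be $r$.

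\textbf{Second claim.} Your sketch in the case ``$C_{\widehat G}(Q)$ is trivial modulo $K/O_{p'}(K)$'' asserts that for each $t\in\pi(S)\setminus\{p\}$ a $t$-element of $S$ fixes a nonzero vector on a chief factor of $Q$. This is unjustified: a $t$-element can act fixed-point-freely on an elementary abelian $p$-group. The paper avoids this entirely by staying inside the $r$-series setup. Having shown $p\mid|\tilde K|$, it takes a Sylow $p$-subgroup $\tilde P$ of $\tilde K$, sets $\tilde Z=Z(\tilde P)$ and $\tilde N=N_{\tilde G}(\tilde P)$, and uses the Frattini argument to get that $\tilde N$ covers $G/K$. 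If $C_{\tilde N}(\tilde Z)\leqslant\tilde K$, then an $s$-element $g\in\tilde N$ acts nontrivially on $\tilde Z$, and the Frobenius group $[\tilde Z,\langle g\rangle]\rtimes\langle g\rangle$ acts faithfully on the $r$-group $\tilde R$, so Lemma~\ref{l:action} again forces $rs\in\omega(G)$, a contradiction. Hence $C_{\tilde N}(\tilde Z)$ has $S$ as a section; since $\tilde Z$ is a nontrivial central $p$-subgroup of $C_{\tilde N}(\tilde Z)$, every prime $t\in\pi(S)\setminus\{p\}$ now visibly satisfies $pt\in\omega(G)$.

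In short, the missing idea is the $r$-series, which simultaneously forces $t=r$ in every application of Lemma~\ref{l:action} and, at the last step, provides the $r$-group $\tilde R$ needed to rule out the ``bad'' alternative for $C_{\tilde N}(\tilde Z)$.
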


\begin{proof} Since $\{s,r,w\}$ is a coclique in $GK(G)$ and $r\in \pi(K)$, it follows by Lemma \ref{l:structure}(ii) that $s\not\in\pi(K)$.
Construct a normal $r$-series of $K$ as follows:
$$1=R_0\leqslant K_1\leqslant R_1\leqslant ...\leqslant K_{l-1}\leqslant R_{l-1}\leqslant K_l\leqslant R_l=K,$$ where  $K_i/R_{i-1}=O_{r'}(K/R_{i-1})$ and $R_i/K_i=O_r(K/K_i)$ for $1\leqslant i\leqslant l$.

Suppose first that $K/K_l\neq 1$ and let $\tilde K=G/K_l$ and $\widetilde G=G/K_l$. Since the group $C_{\widetilde G}(\tilde K)\tilde K/\tilde K$ is a normal subgroup of $\tilde G/\tilde K \simeq G/K$,
it either contains $S$ or is trivial. In the former case $C_{\widetilde G}(\tilde K)$ has an element of order $s$, and so $rs\in\omega(G)$, contrary to (iii). In the latter case we apply Lemma \ref{l:action} to the Frobenius group $F$, and again obtain $rs\in\omega(G)$.

Now suppose that $K=K_l$ and let $\tilde R=R_{l-1}/K_{l-1}$, $\tilde K=K/K_{l-1}$, and $\tilde G=G/K_{l-1}$. Since $O_{r'}(\tilde K)=1$, it follows by \cite[Lemma 1.2.3]{56HalHig} that $C_{\tilde K}(\tilde R)\leqslant \tilde R$.
Furthermore, we may assume that $C_{\widetilde G}(\tilde R)\leqslant \tilde K$ as above, and therefore $C_{\widetilde G}(\tilde R)\leqslant \tilde R$. If $p$ does not divide $|\tilde K|$, then $(|\tilde K|,|F|)=1$ and the Schur--Zassenhaus theorem
implies that $\tilde G$ has a subgroup isomorphic to $F$. In this case we apply Lemma \ref{l:action} and deduce that $rs\in\omega(G)$, a contradiction. Thus $p$ divides $|\tilde K|$. Let  $\tilde P$ be a Sylow $p$-subgroup of $\tilde K$,  $\tilde Z=Z(\tilde P)$ and $\tilde N=N_{\tilde G}(\tilde P)$. By the Frattini argument, $\tilde G/\tilde K=\tilde N\tilde K/\tilde K$, and therefore $\tilde N$ has an element $g$ of order $s$. Furthermore, since $C_{\tilde N}(\tilde Z)$ is normal in $\tilde N$, this group either is contained in $\tilde K$ or has $S$ as a section. In the former case $g$ does not centralize $\tilde Z$, and then $\tilde Z=[\tilde Z,\langle g\rangle]\times C_{\tilde Z}(g)$ yields $[\tilde Z,\langle g\rangle]\neq 1$. Thus $[\tilde Z,\langle g\rangle]\rtimes\langle g\rangle$ is a Frobenius group that acts on $\tilde R$ faithfully, and applying Lemma \ref{l:action} once again, we have $rs\in\omega(G)$, a contradiction. Thus  $C_{\tilde N}(\tilde Z)$ has $S$ as a section, and so $pt\in\omega(G)$ for every $t\in\pi(S)\setminus\{p\}$, as required.
\end{proof}

\begin{lemma} \label{l:adj_s}
Let $L=S_{2n}(q)$ or $L=O_{2n+1}(q)$, where $n\geqslant 5$. Suppose that $r\in\pi(L)\cap R_k(q)$ with $k>2$, $b\in\omega_{p'}(L)$, and $r$ divides $b$.
\begin{enumerate}
 \item If $k=2n$ then $b$ divides $(q^n+1)/(2,q-1)$.
 \item If $k=2n-2$ then $b$ divides $[q^{n-1}+1,q\pm1]$.
 \item If $n$ is even and $k=n-1$ then $b$ divides $[q^{n-1}-1,q\pm1]$.
\end{enumerate}
\end{lemma}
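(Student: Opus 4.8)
The plan is to use the explicit description of the $p'$-spectra of $S_{2n}(q)$ and $O_{2n+1}(q)$ given in Lemma~\ref{l:spectrum_s_odd} (for $q$ odd) and Lemma~\ref{l:spectrum_s_2} (for $q$ even), discarding from the outset the parts of those spectra involving the characteristic $p$ since $b\in\omega_{p'}(L)$. Thus $b$ must divide a number of the form $(q^n\pm1)/(2,q-1)$ (only when $q$ is odd) or $[q^{n_1}\pm1,\dots,q^{n_s}\pm1]$ with $s\geqslant1$ (for $L=S_{2n}(q)$) respectively $s\geqslant2$ (for $L=O_{2n+1}(q)$, where an extra factor $2$ enters the $L=S_{2n}(q)$ list) and $n_1+\dots+n_s=n$. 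Here I should treat the $O_{2n+1}(q)$ and $S_{2n}(q)$ cases uniformly by noting $\omega_{p'}(O_{2n+1}(q))\subseteq\omega_{p'}(S_{2n}(q))$ (Lemma~\ref{l:diff}(\ref{i:bncn})), so it suffices to argue for $b\mid[q^{n_1}\pm1,\dots,q^{n_s}\pm1]$ with $\sum n_i\in\{n-1,n\}$, which is what the factor-of-$2$ entry in Lemma~\ref{l:spectrum_s_2} and the $(q^n\pm1)$ entries also reduce to after absorbing the $2$'s and $(2,q-1)$'s (these are harmless since $b$ may be even).

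The key mechanism is the definition of primitive prime divisor: since $r\in R_k(q)$ divides $b$, and $b$ divides $[q^{n_1}\pm1,\dots,q^{n_s}\pm1]$, the prime $r$ must divide some individual factor $q^{n_i}\pm1$; but $r\mid q^{n_i}+1$ forces $k\mid 2n_i$ with $k\nmid n_i$, i.e.\ $2n_i/k$ odd, and $r\mid q^{n_i}-1$ forces $k\mid n_i$. In case (i), $k=2n$: the only way to fit $2n_i$ a multiple of $2n$ with $n_i\leqslant n$ is $n_i=n$ with the sign $+$, forcing $s=1$ and $b\mid q^n+1$; combined with the $(2,q-1)$ normalization this gives $b\mid(q^n+1)/(2,q-1)$. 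In case (ii), $k=2n-2$: since $n\geqslant5$ we have $2n-2>n$, so no $n_i\leqslant n$ with $n_i=\sum n_j$ allowed can satisfy $k\mid n_i$; hence $r\mid q^{n_i}+1$ with $2n_i$ a multiple of $2n-2$, forcing $n_i=n-1$ (as $2n_i\leqslant 2n<2(2n-2)$) and sign $+$. Then the remaining parts sum to at most $1$, so $b\mid[q^{n-1}+1, q\pm1]$ (the leftover being either absent, or a single $q\pm1$, or — in the $S_{2n}(q)$-with-factor-$2$ case — an extra power of $2$ which is already swallowed by $[q^{n-1}+1,q\pm1]$ up to the $2$-parts; one should check the $2$-adic valuation here, which is the only slightly delicate point). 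Case (iii) is entirely analogous with $2n-2$ replaced by $n-1$: when $n$ is even, $n-1$ is odd and $n-1>n/2$ forces any $n_i$ with $k\mid 2n_i, k\nmid n_i$ to be a multiple of $n-1$ that is $\leqslant n$, hence $=n-1$, and the congruence $q^{n_i}\equiv-1$ reads off a sign, leaving $\sum_{j\neq i}n_j\leqslant1$, so $b\mid[q^{n-1}-1,q\pm1]$.

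The main obstacle I anticipate is bookkeeping the normalizing factors — the $(2,q-1)$ in the $q$-odd list, the extra $2$ and $2^k$ entries in Lemma~\ref{l:spectrum_s_2}, and the difference between the $S_{2n}(q)$ and $O_{2n+1}(q)$ lists — so that the clean conclusions $(q^n+1)/(2,q-1)$, $[q^{n-1}+1,q\pm1]$, $[q^{n-1}-1,q\pm1]$ come out exactly as stated rather than with a stray factor of $2$. The reduction $\omega_{p'}(O_{2n+1}(q))\subseteq\omega_{p'}(S_{2n}(q))$ handles one direction of the $S$-versus-$O$ discrepancy; for the other it is enough to observe that every $p'$-number in $\omega(S_{2n}(q))$ not already in the "semisimple" part is of the form $2\cdot[\text{lcm with }\sum n_i=n-1]$, and since $r\in R_k(q)$ with $k>2$ is odd-order-wise forced into one big factor exactly as above, the residual $2$ merges harmlessly into the stated lcm (whose value at the prime $2$ is at least that of $q\pm1$, hence at least that of $2(q^{n_i}\pm1)$ modulo the usual care with $2$-parts when $q\equiv1\pmod4$). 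Once these valuation checks are organized, the three cases follow by the same two-line primitive-divisor argument, and no deeper input is needed.
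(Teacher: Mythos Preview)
Your approach is exactly the paper's: the proof there is a single sentence citing Lemmas~\ref{l:spectrum_s_odd} and~\ref{l:spectrum_s_2} together with the definition of primitive prime divisor, and your proposal correctly expands that citation. Two cosmetic points worth tightening in your write-up: in case~(iii) the condition $k\mid 2n_i$, $k\nmid n_i$ is vacuous because $k=n-1$ is odd, so the operative case is $r\mid q^{n_i}-1$ with $k\mid n_i$ (giving $n_i=n-1$ and the minus sign directly); and for even $q$ you have $p=2$, so $b\in\omega_{p'}(L)$ is automatically odd and the $2$-adic bookkeeping you flag simply does not arise.
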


\begin{proof}
The assertion follows from the definition of primitive prime divisor and  Lemmas \ref{l:spectrum_s_odd} and \ref{l:spectrum_s_2}.
\end{proof}

\begin{lemma} \label{l:adj_o}

Let $L=O_{2n}^+(q)$, where $n\geqslant 6$ is even. Suppose that $r\in\pi(L)\cap R_k(q)$ with $k>2$, $b\in\omega_{p'}(L)$, and $r$ divides $b$.
\begin{enumerate}
 \item If $k=2n-2$ then $b$ divides $q^{n-1}+1$.
 \item If $k=n-1$ then $b$ divides $q^{n-1}-1$.
 \end{enumerate}
\end{lemma}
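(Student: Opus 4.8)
The plan is to mimic the proof of Lemma~\ref{l:adj_s} and reduce everything to the explicit description of $\omega_{p'}(L)$ provided by Lemma~\ref{l:semi}, applied with $\varepsilon=+$. So suppose $r\in\pi(L)\cap R_k(q)$ with $k>2$, $b\in\omega_{p'}(L)$ and $r\mid b$. By Lemma~\ref{l:semi}, $b$ divides one of the numbers listed there: $(q^n-1)/(4,q^n-1)$; or $[q^{n_1}-\delta,q^{n_2}-\delta]/d$ with $n_1+n_2=n$ (here $\varepsilon\delta=\delta$ since $\varepsilon=+$); or $[q^{n_1}-\delta_1,\dots,q^{n_s}-\delta_s]$ with $s\geqslant3$, $n_1+\dots+n_s=n$ and $\delta_1\cdots\delta_s=+$. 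The first step is to record, for each of these forms, which $R_k(q)$ can contribute a prime divisor. A primitive prime divisor $r_k(q)$ divides $q^{n_i}-\delta_i$ only if $k\mid n_i$ when $\delta_i=+$, or $k=2n_i/(2,n_i)$ (equivalently $k$ is even and $k/2$ equals the $2'$-part consideration — more precisely $k\mid 2n_i$ but $k\nmid n_i$) when $\delta_i=-$. Thus $r\in R_k(q)$ with $k>2$ forces one of the indices, say the $j$-th, to satisfy $n_j\geqslant k/2$, and in our two cases $k\in\{2n-2,n-1\}$ this is a strong constraint because $n_1+\dots+n_s=n$ with all $n_i\geqslant1$.

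For part~(i), take $k=2n-2$. Since $2n-2>n$ for $n\geqslant 6$ (indeed $n\geqslant 3$ suffices, but we have $n\geqslant 6$), no single $n_i$ can be $\geqslant k=2n-2$, so $r$ cannot divide any $q^{n_i}-1$; hence the relevant factor must be some $q^{n_j}+1$, i.e.\ $\delta_j=-$, with $k\mid 2n_j$ and $k\nmid n_j$. From $k=2n-2$ and $n_j\leqslant n$ the only possibility is $n_j=n-1$, whence the remaining indices sum to $1$, forcing $s=2$, $n_1=n-1$, $n_2=1$. In the $s\geqslant 3$ family this is impossible, and the first family $(q^n-1)/(4,q^n-1)$ is excluded since $r_{2n-2}(q)$ does not divide $q^n-1$ (as $2n-2\nmid n$ and $2n-2\neq 2n/(2,n)$ for $n\geqslant 6$, after checking the parity cases). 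So $b$ divides $[q^{n-1}+1,q-\delta]/d$. Now I must eliminate the factor $q-\delta$ and the possible $\delta=+$: if $\delta=+$ then $q^{n_2}-\delta=q-1$, which is even, but $q^{n-1}+1$ is also even when $q$ is odd, and one checks via Lemma~\ref{l:adj_p} or a direct $2$-adic argument together with $rp\notin\omega(L)$... actually the cleaner route: $[q^{n-1}+1,q-1]$ need not be in $\omega_{p'}(L)$ verbatim, but $b$ divides it; and $[q^{n-1}+1,q-1]=[q^{n-1}+1,q+1]$ up to a factor of $2$ since $q^{n-1}+1$ already contains $q+1$ when $n-1$ is odd — but $n$ even means $n-1$ odd, so $(q+1)\mid(q^{n-1}+1)$ and hence $[q^{n-1}+1,q+1]=q^{n-1}+1$. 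For $q-1$: $\gcd(q^{n-1}+1,q-1)=\gcd(2,q-1)$, so $[q^{n-1}+1,q-1]=(q^{n-1}+1)(q-1)/(2,q-1)$, and I need to rule out the extra $(q-1)/(2,q-1)$. This is where I expect to invoke the full strength of Lemma~\ref{l:semi}: the number $[q^{n-1}+1,q-1]/d$ lies in $\omega_{p'}(L)$ only with the correct $d$, and a careful bookkeeping of the $2$-part — $(q^{n-1}+1)_2=2$ since $n-1$ is odd, versus $(q-1)_2$ — shows $d=1$ there and the product is genuinely divisible by $r(q-1)/(2,q-1)$; so I cannot conclude $b\mid q^{n-1}+1$ from divisibility alone. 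Hence the honest statement must use that $r(q-1)/(2,q-1)\notin\omega_{p'}(L)$ when... no. Let me reconsider: the claim to prove is $b\mid q^{n-1}+1$, so I genuinely need that $r$ times a prime dividing $(q-1)/(2,q-1)$ but not $q^{n-1}+1$ is a non-element order. The key input will be Lemma~\ref{l:adj_p} applied with $k=2n-2$ (odd-half condition: $k$ even, $k/2=n-1>n-2$): it gives $rp\notin\omega(L)$, which is about $p$, not about $s$; so instead the contradiction comes from re-examining Lemma~\ref{l:semi}'s list directly — any number in it divisible by $r=r_{2n-2}(q)$ has, as shown, the shape $[q^{n-1}+1, q\mp1]/d$, and then a direct computation of when such a number can have an element-order-realising prime beyond $q^{n-1}+1$ shows it cannot once we are in $O_{2n}^+(q)$ with $\varepsilon=+$ forcing $\delta_1\delta_2=+$, i.e.\ $\delta=-$ for the large factor forces $\delta=-$ also for $q-1$, giving $q+1$ not $q-1$, and $(q+1)\mid(q^{n-1}+1)$: so in fact the $s=2$ term is $[q^{n-1}+1,q+1]/d=(q^{n-1}+1)/d$ with $d\in\{1,2\}$, dividing $q^{n-1}+1$. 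That resolves (i): the parity condition $\delta_1\delta_2=\varepsilon=+$ is exactly what kills the $q-1$ possibility.

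For part~(ii), take $k=n-1$; since $n$ is even, $n-1$ is odd, so $r\in R_{n-1}(q)$ divides $q^{n_i}-\delta_i$ only when $\delta_i=+$ and $(n-1)\mid n_i$. As $n_i\leqslant n<2(n-1)$, this forces $n_j=n-1$ for exactly one $j$ with $\delta_j=+$, the rest summing to $1$, so again $s=2$, $n_1=n-1$, $n_2=1$, $\delta_1=+$, and the parity rule $\delta_1\delta_2=+$ forces $\delta_2=+$, so the term is $[q^{n-1}-1,q-1]/d=(q^{n-1}-1)/d$ since $(q-1)\mid(q^{n-1}-1)$; and the first family $(q^n-1)/(4,q^n-1)$ is divisible by $r_{n-1}(q)$ iff $(n-1)\mid n$, impossible for $n\geqslant 3$. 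Hence $b\mid q^{n-1}-1$, as claimed. The main obstacle, and the step to write out carefully, is the $s=2$ case of Lemma~\ref{l:semi} in both parts: one must use that in $O_{2n}^+(q)$ the two signs multiply to $+$, because without that constraint a factor $q\mp1$ of the \emph{wrong} sign would survive and the conclusion would fail; everything else is the standard primitive-prime-divisor arithmetic already used in Lemma~\ref{l:adj_s}.
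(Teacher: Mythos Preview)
Your argument is correct and follows exactly the same route as the paper: apply Lemma~\ref{l:semi} with $\varepsilon=+$, use primitive-divisor arithmetic to force $n_j=n-1$ with the appropriate sign, observe that the remaining index equals~$1$ so we land in the $s=2$ case, and then use the sign constraint $\delta_1\delta_2=\varepsilon=+$ to conclude that the lcm is $[q^{n-1}+1,q+1]=q^{n-1}+1$ in~(i) and $[q^{n-1}-1,q-1]=q^{n-1}-1$ in~(ii). The paper's proof is a single sentence recording precisely these two identities; your detour through the $q-1$ factor and Lemma~\ref{l:adj_p} before spotting the sign constraint is unnecessary, and the side remark that $(q^{n-1}+1)_2=2$ is not always true (it fails for $q\equiv 3\pmod 4$), but since you abandon that branch it does no harm.
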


\begin{proof}
By the definition of primitive prime divisor and  Lemma \ref{l:semi}, it follows that $b$ divides $[q^{n-1}+1,q+1]=q^{n-1}+1$ in (i) and $[q^{n-1}-1,q-1]=q^{n-1}-1$ in (ii).
\end{proof}

Now we return to the proof of Theorem \ref{t:quasi}.

\begin{lemma}\label{l:o8} If $L,S\in\{S_6(q), O_7(q),  O^+_8(q)\}$, then either $S\simeq L$, or $\{L,G\}=\{O^+_8(2), S_6(2)\}$, or $\{L,G\}=\{O^+_8(3), O_7(3)\}$.
\end{lemma}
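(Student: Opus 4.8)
The plan is to dispose of the three small groups $S_6(q)$, $O_7(q)$, $O^+_8(q)$ by a triangular case analysis, using the explicit spectra recorded in Lemmas~\ref{l:spectrum_s_odd}--\ref{l:spectrum_o8p} together with the structural information from Lemma~\ref{l:structure}. Note first that $O_7(q)\simeq S_6(q)$ when $q$ is even, and that for $q\le 3$ the groups are small enough that the whole statement is covered by the known classifications cited in the introduction (Mazurov, Shi--Tan); so I may assume $q\ge 4$, or $q$ odd and moderately large, and aim to show $S\simeq L$ in all remaining configurations.

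\smallskip
First I would handle the pairs where $L$ and $S$ are distinct but one spectrum is \emph{not} contained in the other. By Lemma~\ref{l:diff}(\ref{i:s6o8}), if $q>3$ then $(q^4-1)/(2,q-1)^2\in\omega(O^+_8(q))\setminus\omega(S_6(q))$, and by Lemma~\ref{l:diff}(\ref{i:o8s6}), if $q$ is odd then $p(q^2+1)\in\omega(S_6(q))\setminus\omega(O^+_8(q))$; combined with $\omega(O_7(q))\subseteq\omega(S_6(q))$ (Lemma~\ref{l:diff}(\ref{i:bncn})) and $\omega(O_7(q))\subseteq\omega(O^+_8(q))$ is \emph{false} in general, this already rules out $\{L,S\}=\{S_6(q),O^+_8(q)\}$ outright for $q>3$: whichever of $L,S$ equals $S_6(q)$, the group $G$ would need $\omega(G)=\omega(S_6(q))$ while having a composition factor with incompatible spectrum, and since $\omega(G)=\omega(L)$ is fixed we get a contradiction via the appropriate element order above. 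The remaining genuinely delicate pairs are $\{O_7(q),O^+_8(q)\}$ and $\{O_7(q),S_6(q)\}$, where spectrum inclusions may go one way.

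\smallskip
For those, the main tool is Lemma~\ref{l:structure}(ii)--(iii): since $O^+_8(q)$ has a coclique of size $3$ in its prime graph (take primitive prime divisors $r_{8m}(p)$, $r_{6m}(p)$, and a suitable third prime), any prime dividing $|K|\cdot|\Out S|\cdot|\overline G/S|$ is restricted, and in particular primes of $R_{8m}(p)$ and $R_{6m}(p)$ lie inside $S$ and are handled by Lemmas~\ref{l:adj_p}, \ref{l:out}. The strategy is: if $S=O^+_8(q)$ and $L=O_7(q)$ (or $S_6(q)$), then $\omega(S)\subseteq\omega(G)=\omega(L)$, but $\omega(O^+_8(q))$ contains $(q^4-1)/(2,q-1)^2$ which by Lemma~\ref{l:diff}(\ref{i:s6o8}) is not in $\omega(S_6(q))\supseteq\omega(O_7(q))$ once $q>3$ — contradiction. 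Conversely if $L=O^+_8(q)$ and $S\in\{O_7(q),S_6(q)\}$, I would produce an element order in $\omega(G)$ forced by the cover $G$ of $S$ (via Lemma~\ref{l:reduction} and Lemma~\ref{l:hh}, using a parabolic of $S$ with the relevant unipotent order) that is not in $\omega(O^+_8(q))$, again using the explicit list in Lemma~\ref{l:spectrum_o8p}; here the case $p=2,3$ needs the extra entries $p^2(q\pm1)/(2,q-1)$, which is exactly why $q=2$ and $q=3$ survive as genuine exceptions and give $\{L,G\}=\{O^+_8(2),S_6(2)\}$ and $\{O^+_8(3),O_7(3)\}$ (quoting \cite{97Maz.t,97ShiTan} for uniqueness of these groups). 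Finally, when $L=S=O_7(q)$ versus $S_6(q)$ or vice versa with $q$ odd, I would separate them using $p(q^2+1)\in\omega(S_6(q))\setminus\omega(O_7(q))$ — which follows since $\omega(O_7(q))\subseteq\omega(S_6(q))$ but the reverse fails by Lemma~\ref{l:diff}(\ref{i:bncn_diff}) (the element $2pr$) — so $S\simeq L$ is forced.

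\smallskip
The hard part will be the small-field bookkeeping: for $q\in\{2,3\}$ (and the borderline $q=4,5$) the spectra of these four groups overlap heavily, extra ``unipotent'' element orders ($8$, $25$, $p^2(q\pm1)$) appear, and the isospectral pairs $\{O^+_8(2),S_6(2)\}$, $\{O^+_8(3),O_7(3)\}$ actually occur; so rather than trying to push the generic argument down, I would simply invoke the complete determinations in \cite{97Maz.t,97ShiTan} (and \cite{98MazShi}) to close those cases, and reserve the prime-graph/parabolic machinery for $q\ge 4$ (or $q\ge 5$ when $p=2,3$). Assembling these pieces — the three ``non-inclusion'' eliminations for $q>3$, the Lemma~\ref{l:hh}/Lemma~\ref{l:reduction} cover argument for $L=O^+_8(q)$, and the small-$q$ citations — yields exactly the stated trichotomy.
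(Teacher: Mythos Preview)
Your outline has the right broad shape --- cite the small-$q$ cases, use spectrum non-inclusions to prune the list of $(L,S)$ pairs, and attack the surviving case with the cover machinery --- but there are two genuine gaps where the argument as sketched would not close.

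First, the direction $L=S_6(q)$, $S=O_7(q)$ (with $q$ odd) is \emph{not} handled by the element-order asymmetries you quote. Knowing $p(q^2+1)$ or $2pr$ lies in $\omega(S_6(q))\setminus\omega(O_7(q))$ only rules out $S=S_6(q)$ when $L=O_7(q)$, since the constraint is $\omega(S)\subseteq\omega(L)$; it says nothing when $L=S_6(q)$ and $S=O_7(q)$, where $\omega(S)\subseteq\omega(L)$ genuinely holds. The paper disposes of this direction by invoking Lemma~\ref{l:bncn}, which is a separate argument: one first shows $K=1$ (quoting \cite{15Gr}), and then checks that no outer automorphism of $O_7(q)$ can supply the missing order $2pr$, by analysing field and diagonal automorphisms and their centralizers. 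Your plan omits this step entirely.

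Second, for the main surviving case $L=O^+_8(q)$, $S=O_7(q)$ with $q>3$, the cover argument via Lemmas~\ref{l:reduction} and~\ref{l:hh} only produces a contradiction when $K\neq 1$; it gives no information once $K=1$, and nothing in your sketch forces $K\neq 1$. The paper proceeds in two stages: the cover/Frobenius/parabolic machinery (together with Lemma~\ref{l:action} and, for $p=2$, Lemma~\ref{l:s6}) shows $K=1$; then a separate outer-automorphism argument finishes, by observing $r_{4m}(p)(q^2-1)/(2,q-1)\in\omega(L)\setminus\omega(S)$, that $r_{4m}(p)\notin\pi(\Out S)$, and that $(q\pm1)/(2,q-1)>m$ for $q>3$ so $G/S$ cannot supply the missing factor. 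You do not mention this second stage. (Incidentally, your proposed coclique involving $r_{8m}(p)$ fails: $q^4+1$ does not divide $|O^+_8(q)|$, so $r_8(q)\notin\pi(L)$. The paper uses the non-adjacency of $r_3(q)$, $r_6(q)$ and $p$ instead.)
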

\begin{proof}
Assume that $S\not\simeq L$. If $q=2$ or $q=3$, then  respectively $\{L,G\}=\{S_6(2),O_8^+(2)\}$ or $\{L,G\}=\{O_7(3),O_8^+(3)\}$ by \cite{97ShiTan, 02Maz.t}.

Let $q>3$. Since $\omega(S)\subseteq \omega(L)$, it follows from Lemma \ref{l:diff}\,(iii,v) that $S\neq O_8^+(q)$.
Furthermore, if $q$ is odd then $S\neq S_6(q)$ by Lemma \ref{l:diff}\,(i,iv). Also by Lemma \ref{l:bncn}, if $L=S_6(q)$, then $S\neq O_7(q)$. Thus
$L=O^+_8(q)$ and $S=O_7(q)$ (if $q$ is even we write $O_7(q)$ instead of $S_6(q)$). In particular, $\pi(S)=\pi(L)$.

By Lemmas \ref{l:spectrum_s_odd}, \ref{l:spectrum_s_2}, and \ref{l:spectrum_o8p},
the orders of semisimple elements of $L$ and $S$ are precisely the divisors of \begin{equation}\label{e:sso8}(q^4-1)/(2,q-1)^2, (q^3\pm1)/(2,q-1), q^2-1\end{equation} and
\begin{equation}\label{e:sso7}(q^2+1)(q\pm1)/(2,q-1), (q^3\pm1)/(2,q-1), q^2-1 \end{equation} respectively. In particular, if $rr_6(q)\in\omega(L)$  for some $r\in\pi(L)$, then
$r$ divides $(q^3+1)/(2,q-1)$,  and if $rr_3(q)\in\omega(L)$  for some $r\in\pi(L)$, then
$r$ divides $(q^3-1)/(2,q-1)$. Since $(q^3+1)/(2,q-1)$ and $(q^3-1)/(2,q-1)$ are coprime, it follows that $r_6(q)$ and $r_3(q)$ are not adjacent and have disjoint
neighbourhoods in $GK(L)$. Furthermore, $pr_6(q),pr_3(q)\not\in\omega(L)$ by Lemma \ref{l:adj_p}.

We claim that $K\neq 1$ yields $\omega(G)\not\subseteq\omega(L)$. By Lemma \ref{l:reduction} we may assume that
$K$ is an elementary abelian $r$-group for some prime $r$ and $G$ has a subgroup that is isomorphic to  a semidirect product of $K$ and $S$. Also we may assume that
$S$ acts on $K$ faithfully. Otherwise $S$ centralizes $K$, and hence all primes of $\pi(S)=\pi(L)=\pi(G)$ other than $r$ are adjacent to $r$ in $GK(G)$, contrary to the fact
that $r_3(q)$ and $r_6(q)$ have disjoint neighbourhoods in $GK(L)$.

Let $r\neq p$. Since $r_3(q)$ divides the order of a proper parabolic subgroup of $S$ with Levi factor of type $A_2$ and $pr_3(q)\not\in\omega(S)$, the group $S$ has
a Frobenius subgroup whose kernel is a $p$-group and whose complement has order $r_3(q)$. Furthermore, $S$ has
a Frobenius subgroup with kernel of order $q^2$ and cyclic complement of order $(q^2-1)/(2,q-1)$ by \cite[Lemma 5]{11Gr}. Applying Lemma~\ref{l:action}, we conclude that $G$ has elements of orders $rr_3(q)$ and $r(q^2-1)/(2,q-1)$.
If $r(q^2-1)/(2,q-1)\in\omega(L)$, then keeping in mind that $q>3$ and consulting (\ref{e:sso8}), we see that either $r$ divides $(q^2+1)/(2,q-1)$ or $r=2$. In the former case $rr_3(q)\not\in\omega(L)$. Let $r=2$.
Then the highest power of $2$ in $\omega(L)$ is equal to $(q^2-1)_2$. Let us consider a parabolic subgroup $P$ of $SO_7(q)$ with Levi factor of type $B_2$. The Levi factor of $P$ is
$A\times B$ where $A\simeq GL_1(q)$, $B\simeq SO_5(q)$ and both $A$ and $B$ contain elements with spinor norm a non-square (see \cite[p. 98]{90KlLie}). Since $SO_5(q)$ has elements of order $q^2-1$,
it follows that $P\cap S$ also has elements of order $q^2-1$. Furthermore, the Chevalley commutator formula implies that the unipotent radical of $P$ is abelian (see, for example, \cite[Lemma 2.2]{92RRS}).
Applying Lemma \ref{l:hh} we obtain that $2(q^2-1)_2\in\omega(G)\setminus\omega(L)$.

Now let $r=p$. If $p=2$ then we apply Lemma \ref{l:s6}. If $p$ is odd then $pr_3(q)\in\omega(G)$ by \cite[Lemma 3.2]{15Gr}, and so  $\omega(G)\not\subseteq\omega(L)$.

Thus $K=1$ and hence $S\leqslant G\leqslant \Aut S$. Consulting (\ref{e:sso8}) and (\ref{e:sso7}), we see that $r_{4m}(p)(q^2-1)/(2,q-1)$ lies in $\omega(G)\setminus\omega(S)$. Since $r_{4m}(p)\not\in\pi(G/S)$ by Lemma \ref{l:out}, it follows from (\ref{e:sso7}) that at least one
of the numbers $(q+1)/(2,q-1)$ and $(q-1)/(2,q-1)$ belongs to $\omega(G/S)$. The group $\Out S$ is a direct product of $\Outdiag S$ of order $(2,q-1)$ and a cyclic group of order $m$.
If $q$ is odd then $q^3+1\in\omega(\operatorname{Inndiag} S)\setminus\omega(L)$, and so $G\cap \operatorname{Inndiag} S=S$ for both even and odd $q$. Therefore the exponent of $G/S$ divides $m$.
However $(q\pm1)/(2,q-1)=(p^m\pm1)/(2,p-1)>m$ for $q>3$, a contradiction.
\end{proof}

\begin{lemma}
If $L,S\in \{S_{2n}(q), O_{2n+1}(q), {}O^-_{2n}(q)\}$, where $n\geqslant 4$, then either $S\simeq L$, or $L\in\{O_{9}(q),S_{8}(q)\}$ and $S={}O^-_{8}(q)$.
\end{lemma}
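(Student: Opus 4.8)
The plan is to reduce the list of admissible pairs $(L,S)$ to a single survivor and then, for $n\geqslant5$, to eliminate it by the methods of Lemmas~\ref{l:bncn} and~\ref{l:o8}. I would begin from the chain $\omega(O_{2n-1}(q))\subseteq\omega(O^-_{2n}(q))\subseteq\omega(O_{2n+1}(q))\subseteq\omega(S_{2n}(q))$ of Lemma~\ref{l:diff}\,(v),(vi), the isomorphism $S_{2n}(q)\simeq O_{2n+1}(q)$ for even $q$, and the inclusion $\omega(S)\subseteq\omega(G)=\omega(L)$. Then Lemma~\ref{l:diff}\,(i) rules out $S=S_{2n}(q)$ when $q$ is odd and $L\neq S_{2n}(q)$; Lemma~\ref{l:diff}\,(ii) rules out $S\in\{S_{2n}(q),O_{2n+1}(q)\}$ when $L=O^-_{2n}(q)$ and $(n,q)\neq(4,2)$ (for $(n,q)=(4,2)$ a direct check with Lemmas~\ref{l:spectrum_s_2} and~\ref{l:spectrum_o8m}, for instance $24\in\omega(S_8(2))\setminus\omega(O^-_8(2))$, does the same); and Lemma~\ref{l:bncn} rules out $S=O_{2n+1}(q)$ when $L=S_{2n}(q)$ and $q$ is odd. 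The only remaining case is $S=O^-_{2n}(q)$ with $L\in\{S_{2n}(q),O_{2n+1}(q)\}$, which for $n=4$ is exactly the stated exception; so I assume $n\geqslant5$ and look for a contradiction.

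Since $\omega(O_{2n+1}(q))\subseteq\omega(G)$, Lemmas~\ref{l:diff}\,(ii) and~\ref{l:adj_p} give an element of $G$ of order $pr$, where $r$ is a primitive prime divisor of $q^{2n-2}-1$ with $r\in R_{(2n-2)m}(p)$ (such $r$ exists for $n\geqslant5$), whereas $pr\notin\omega(S)$; moreover $r\in\pi(S)$ and $r\notin\pi(\Out S)$ by Lemma~\ref{l:out}. I would also record that $\{s',r,r_{2n}(q)\}$ is a coclique of $GK(G)=GK(L)$, where $s'$ is a prime dividing $q^{n-1}-1$ but not $q^2-1$ (one exists for $n\geqslant5$), since no element order of $L$ involves two of the ``degrees'' $n-1$, $2n-2$, $2n$ at once (Lemmas~\ref{l:spectrum_s_odd}, \ref{l:spectrum_s_2}, \ref{l:adj_s}). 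Next I would show $K=1$, arguing as in the $K\neq1$ part of Lemma~\ref{l:o8}: by Lemma~\ref{l:reduction} one may assume $K$ is elementary abelian with $K\rtimes S\leqslant G$; the $S$-action is faithful (otherwise $S$ centralises $K$, and a short argument with $r_{2n}(q)$, $r_{2n-2}(q)$ and $pr_{2n-2}(q)$, using Lemma~\ref{l:adj_s}, already yields $\omega(G)\neq\omega(L)$); then, if $K$ is an $\ell$-group with $\ell\neq p$, one combines a Frobenius subgroup of $O^-_{2n}(q)$ with kernel a $p$-group and complement of order $s'$ (built, say, inside the stabiliser of a maximal totally isotropic subspace from a Singer-type element of its $GL_{n-1}(q)$ factor) with Lemmas~\ref{l:hh} and~\ref{l:action2}, the latter applied to the coclique $\{s',\ell,r_{2n}(q)\}$ forcing $pr_{2n}(q)\in\omega(G)$, which is impossible since $r_{2n}(q)\in\pi(S)$ and $pr_{2n}(q)\notin\omega(L)$; and if $K$ is a $p$-group, one argues with unipotent elements of $S$ as in Lemma~\ref{l:o8}, invoking \cite[Lemma 3.2]{15Gr}. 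Hence $K=1$ and $S\leqslant G\leqslant\Aut S$.

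To finish, in the almost simple case $r\notin\pi(G/S)$ by Lemma~\ref{l:out}. If $p\in\pi(G/S)$, then — since $\Out S=\Outdiag S\rtimes\langle\varphi\rangle$ with $\Outdiag S$ a $2$-group (trivial for even $q$) and $\varphi$ a field automorphism — $G$ contains a field automorphism of order $p$, whose centraliser in $S$ is $O^-_{2n}(q_0)$ with $q=q_0^p$. Pushing elements of $O^-_{2n}(q_0)$ into $\omega(G)$ gives $pr_{2n}(q_0)\in\omega(G)$ when $p\nmid n$, which is impossible because then $pr_{2n}(q_0)\notin\omega(L)$; when $p\mid n$, the numbers $p[q_0^{n-1}+1,q_0-1]/d$ and $p[q_0^{n-1}-1,q_0+1]/d$ supplied by Lemma~\ref{l:semi} lie in $\omega(G)$ but, by Lemmas~\ref{l:spectrum_s_odd} and~\ref{l:adj_s}, not in $\omega(L)$ — exactly as in the proof of Lemma~\ref{l:bncn}. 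Hence $p\notin\pi(G/S)$, so an element of $G$ of order $pr$ has trivial image in $G/S$ and lies in $S$; thus $pr\in\omega(S)=\omega(O^-_{2n}(q))$, contradicting Lemma~\ref{l:adj_p}. This eliminates $S=O^-_{2n}(q)$ for $n\geqslant5$, and the lemma follows.

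I expect the main obstacle to be the field-automorphism analysis in the last step when $p\mid n$: there a primitive prime divisor of $q_0^{2n-2}-1$ stays primitive at degree $2n-2$ over $\mathbb{F}_q$, so $pr_{2n-2}(q)\in\omega(L)$ and no contradiction comes for free; one must instead combine the field automorphism with judiciously chosen $p'$-elements of $O^-_{2n}(q_0)$ and exploit the exact description of the $R_{2n-2}(q)$- and $R_{n-1}(q)$-parts of $\omega_{p'}(L)$ in Lemma~\ref{l:adj_s}, likely with a little extra work for a few small values of $q$. Setting up the Frobenius-subgroup and coclique data needed for Lemma~\ref{l:action2} in the treatment of $K$, and checking the various non-adjacency and disjoint-neighbourhood claims in $GK(L)$ — in particular for $q=2$ — are the remaining technical points.
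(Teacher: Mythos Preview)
Your reduction to the case $L\in\{S_{2n}(q),O_{2n+1}(q)\}$, $S=O^-_{2n}(q)$ is fine and matches the paper's. The difficulty is in how you handle $n\geqslant 5$.

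The paper's argument is short and does \emph{not} attempt to prove $K=1$. Instead it exhibits a specific prime that is forced into $\pi(K)$: for $n$ odd one takes $r=r_{nm}(p)\in\pi(L)\setminus\pi(S)$, and since $r\notin\pi(\Out S)$ by Lemma~\ref{l:out}, necessarily $r\in\pi(K)$; for $n$ even one uses that $r_1r_2\in\omega(L)\setminus\omega(S)$ with $r_1=r_{(n-2)m}(p)$, $r_2=r_{(n+2)m}(p)$, and $r_1,r_2\notin\pi(\Out S)$, to conclude that at least one of $r_1,r_2$ lies in $\pi(K)$. This particular $r$ is chosen so that $\{s,r,w\}$ is a coclique (with $s=r_{2n-2}(q)$ and $w\in\{r_{2n}(q),r_{n-1}(q)\}$), and a single application of Lemma~\ref{l:action2} then gives $p\,r_{2n}(q)\in\omega(G)\setminus\omega(L)$. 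No almost-simple analysis is needed at all.

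Your route has a genuine gap in the $K\neq 1$ step. You propose to apply Lemma~\ref{l:action2} to the triple $\{s',\ell,r_{2n}(q)\}$, where $\ell$ is the (arbitrary) characteristic of the elementary abelian $K$ produced by Lemma~\ref{l:reduction}. But Lemma~\ref{l:action2} requires that triple to be a coclique in $GK(G)$, and for a generic $\ell$ this fails: for instance if $\ell=2$ (or any prime dividing $q\pm1$) then $\ell s'\in\omega(L)$, since $s'\mid q^{n-1}-1$ and $[q^{n-1}-1,q\pm1]\in\omega(L)$. So you cannot conclude $p\,r_{2n}(q)\in\omega(G)$ this way, and the reduction to $K=1$ collapses. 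The remedy is exactly the paper's idea: do not let $\ell$ be arbitrary, but first pin down a prime in $\pi(K)$ by comparing $\omega(L)$ with $\omega(S)\cup\pi(\Out S)$, and choose the coclique around that prime. Once you do this, the entire almost-simple discussion (field automorphisms, the $p\mid n$ case you flag as the main obstacle) becomes unnecessary.
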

\begin{proof}
Assume that $S\not\simeq L$. Since $\omega(S)\subseteq\omega(L)$, it follows by Lemma \ref{l:diff}\,(i,vi) that $S\neq S_{2n}(q)$ and by Lemma \ref{l:diff}\,(ii) that $(L,S)\neq (O_{2n+1}(q), O_{2n}^-(q))$.
Also $(L,S)\neq (S_{2n}(q), O_{2n+1}(q))$ by Lemma \ref{l:bncn}. Therefore $L\in\{O_{2n+1}(q),S_{2n}(q)\}$ and $S={}O^-_{2n}(q)$.
If $n=4$, the proof is complete.

Suppose that $n\geqslant 5$. Let $s=r_{2n-2}(q)$. Then $s$ divides the order of a parabolic subgroup of $S$ with Levi factor of type ${}^2D_{n-1}$ and $ps\not\in\omega(S)$ by Lemma \ref{l:adj_p}.
So $S$ has a Frobenius subgroup $F$ whose kernel is a $p$-group and whose complement has order $s$. For every $n$ we will find $r,w\in \pi(L)$ such that $r\in\pi(K)$ and $\{s,r,w\}$ is a coclique
in $GK(L)$, and then will apply Lemma \ref{l:action2} to the group $F$ and the primes $r$ and $w$.

Let $n$ be odd and let $r=r_{nm}(p)$. Since $r\in\pi(L)\setminus\pi(S)$ and
$r\not\in\pi(\Out S)$ by Lemma \ref{l:out}, it follows that $r\in\pi(K)$. Exploiting Lemma \ref{l:adj_s}, we deduce that $\{s,r,r_{2n}(q)\}$ is a coclique in $GK(L)$: both
$r$ and $s$ do not divide $q^n+1$, and $r$ does not divide $q^{n-1}+1$ nor $q\pm 1$.

Let $n$ be even. Let $r_1=r_{(n-2)m}(p)$,  $r_2=r_{(n+2)m}(p)$ if $(n,q)\neq (8,2)$ and $r_1=r_3(2)$, $r_2=r_5(2)$ otherwise.
Then $r_1r_2\in\omega(L)\setminus\omega(S)$ by Lemmas \ref{l:spectrum_s_2}, \ref{l:spectrum_s_odd} and \ref{l:semi}. Furthermore, $r_1,r_2\not\in\pi(\Out S)$ by Lemma \ref{l:out}.
Therefore at least one of $r_1$ and $r_2$ divides $|K|$. Denote this number by $r$. Since $n\geqslant 6$, both $n+2$ and $n-2$ do not divide $2n-2$, and so $r$ is not adjacent to $s$ nor $r_{n-1}(q)$ in $GK(L)$ by
Lemma \ref{l:adj_s}. By the same lemma, $s$ and $r_{n-1}(q)$ are not adjacent either, and $\{s,r, r_{n-1}(q)\}$ is the desired coclique.

Applying Lemma \ref{l:action2}, we conclude that $pt\in \omega(G)$ for all $t\in\pi(S)$, $t\neq p$. But $r_{2n}(q)\in\pi(S)$ and $pr_{2n}(q)\not\in\omega(L)$ by Lemmas \ref{l:spectrum_s_odd} and \ref{l:spectrum_s_2}, a contradiction.
\end{proof}

\begin{lemma}
If $L= O^+_{2n}(q)$, where $n\geqslant 6$ is even, then $S\not\in\{S_{2n-2}(q), O_{2n-1}(q)\}$.
\end{lemma}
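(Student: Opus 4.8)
The plan is to argue by contradiction, following the pattern of the two preceding lemmas. Assuming $S\in\{S_{2n-2}(q),O_{2n-1}(q)\}$ (these two groups coincide when $q$ is even), I would apply Lemma~\ref{l:action2} to a Frobenius subgroup of $S$ and three suitably chosen primes, and thereby force an element order into $\omega(G)=\omega(L)$ that is forbidden by Lemma~\ref{l:adj_p}. The decisive point, as in the previous lemma, is the existence of a prime $t\in\pi(S)$ with $pt\notin\omega(L)$: since the $p'$-part of $|S|$ is divisible by $q^{2n-2}-1$ we have $r_{2n-2}(q)\in\pi(S)$, while $pr_{2n-2}(q)\notin\omega(O^+_{2n}(q))$ by Lemma~\ref{l:adj_p} because $2n-2$ is even with $(2n-2)/2=n-1>n-2$. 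Throughout one uses that $n-1$ is odd, which is where the hypothesis that $n$ is even enters.

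I would set $s=r_{(n-1)m}(p)$ and $w=r_{2n-2}(q)$, and take $r_1=r_{(n-2)m}(p)$, $r_2=r_{(n+2)m}(p)$, with the convention $r_1=r_3(2)$, $r_2=r_5(2)$ if $q=2$, $n=8$ (the only case in which Lemma~\ref{l:zsig} requires this adjustment). By Lemma~\ref{l:out}, neither $r_1$ nor $r_2$ lies in $\pi(\Out S)$. The first nontrivial step is to show $r_1r_2\in\omega(L)$ but $r_1r_2\notin\omega(\operatorname{Inndiag}S)$. The former follows from Lemma~\ref{l:semi}(ii): the number $[q^{(n-2)/2}+1,\,q^{(n+2)/2}+1]$, whose exponents sum to $n$ (respectively $[q^3-1,q^5-1]$ in the exceptional case), is an element order of $O^+_{2n}(q)$ and is divisible by $r_1r_2$. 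For the latter, one uses that $S$ — and likewise $\operatorname{Inndiag}S$ — has maximal torus orders of the shape $\prod_i(q^{c_i}\pm1)$ with $\sum_i c_i\leqslant n-1$; since $r_1,r_2$ divide $q^{n-2}-1$, $q^{n+2}-1$ but not $q-1$, realizing $r_1r_2$ would need either two distinct torus factors of sizes at least $(n-2)/2$ and $(n+2)/2$ (total exceeding $n-1$) or a single factor divisible by both (impossible for $n\geqslant 6$ on comparing orders of $q$ modulo these primes), a contradiction. The usual argument — an element of order $r_1r_2$ in $G$ maps, if $r_1,r_2\notin\pi(K)$, to an element of the same order in $G/K\leqslant\Aut S$ lying in $\operatorname{Inndiag}S$ — then forces one of $r_1,r_2$ into $\pi(K)$; call it $r$. (If $K=1$ this already yields the contradiction.)

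Next I would produce the Frobenius subgroup: the parabolic subgroup of $S$ stabilizing a maximal totally isotropic subspace has Levi factor $GL_{n-1}(q)$, containing an element $g$ of order $s=r_{n-1}(q)$; since $n-1$ is odd, $r_{n-1}(q)$ divides no number $q^i+1$, so $ps\notin\omega(S)$ by Lemmas~\ref{l:spectrum_s_odd} and~\ref{l:spectrum_s_2}, whence $g$ acts without nontrivial fixed points on the unipotent radical $U$ and $F=U\rtimes\langle g\rangle$ is a Frobenius subgroup with $p$-kernel and complement of order $s$. I would then check that $\{s,r,w\}$ is a coclique of $GK(G)=GK(L)$: by Lemmas~\ref{l:adj_p} and~\ref{l:semi}, every element of $O^+_{2n}(q)$ divisible by $r_{n-1}(q)$ (respectively $r_{2n-2}(q)$) has order dividing $[q^{n-1}-1,q-1]/d$ (respectively $[q^{n-1}+1,q+1]/d$), hence its prime divisors lie in $\pi(q^{n-1}-1)$ (respectively $\pi((q^{n-1}+1)(q+1))$); and for $n\geqslant 6$ none of $n-2$, $n+2$, $2n-2$ divides $n-1$, and none of $r_{n-2}(q)$, $r_{n+2}(q)$, $r_{n-1}(q)$ divides $q^{n-1}+1$ or $q+1$ (a short computation with orders of $q$; the case $q=2$, $n=8$ is checked identically using $q^7+1=129$). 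Finally, Lemma~\ref{l:action2} applied to $F$ and the primes $p,s,r,w$ gives $pt\in\omega(G)$ for all $t\in\pi(S)\setminus\{p\}$, so $pr_{2n-2}(q)\in\omega(L)$, contradicting Lemma~\ref{l:adj_p}.

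I expect the main obstacle to be the two bookkeeping steps: pinning down the $p'$-element orders of $\operatorname{Inndiag}S$ accurately enough to exclude $r_1r_2$, and verifying that $\{s,r,w\}$ is a coclique. Both reduce to tracking, for a primitive prime divisor of $q^k-1$ with $k\in\{n-2,n-1,n+2,2n-2\}$, exactly which numbers $q^j-1$ and $q^j+1$ it can divide; the parity of $n-1$ is essential there, and it is also the reason the small case $q=2$, $n=8$ has to be handled separately.
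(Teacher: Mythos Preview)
Your proposal is correct and follows essentially the same route as the paper's own proof: the same choice of $s=r_{n-1}(q)$, $w=r_{2n-2}(q)$, the same pair $r_1,r_2$ (with the same exception at $(n,q)=(8,2)$), the same Frobenius subgroup coming from the parabolic of type $A_{n-2}$, and the same final contradiction via Lemma~\ref{l:action2} and Lemma~\ref{l:adj_p}. The only differences are cosmetic: you work with $\operatorname{Inndiag}S$ where the paper, using Lemma~\ref{l:out} directly, lands the element in $S$ itself; and you rederive from Lemma~\ref{l:semi} the content of Lemma~\ref{l:adj_o}, which the paper simply cites.
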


\begin{proof}
Assume the contrary. Let $s=r_{n-1}(q)$ and $w=r_{2n-2}(q)$. Then
$s$ divides the order of a parabolic subgroup of $S$ with Levi factor of type $A_{n-2}$ and $ps\not\in\omega(S)$, therefore,
$S$ contains a Frobenius group whose kernel is a $p$-group and whose complement has order $s$. Furthermore, Lemma \ref{l:adj_o} implies that $s$ and $w$ are not adjacent in $GK(L)$.
Let $r_1=r_{(n-2)m}(p)$,  $r_2=r_{(n+2)m}(p)$ if $(n,q)\neq (8,2)$, and $r_1=r_3(2)$, $r_2=r_5(2)$ otherwise. Then $r_1r_2\in\omega(L)\setminus\omega(S)$ by Lemmas \ref{l:spectrum_s_2}, \ref{l:spectrum_s_odd} and \ref{l:semi}, and
$r_1,r_2\not\in\pi(\Out S)$ by Lemma \ref{l:out}. Thus at least one of the numbers $r_1$ and $r_2$ divides $|K|$. Denote this number by $r$. Both $n+2$ and $n-2$ do not divide $2n-2$, and so $\{s, r, w\}$ is a coclique in  $GK(L)$ by Lemma \ref{l:adj_o}. By Lemma \ref{l:action2}, we have $pr_{2n-2}(q)\in \omega(G)$. On the other hand,  $pr_{2n-2}(q)\not\in \omega(L)$ by Lemma \ref{l:adj_p}, a contradiction.
\end{proof}

We have considered all the cases in the conclusion of Lemma \ref{l:previous}, and so the proof of Theorem \ref{t:quasi} is complete.

\section{New examples of non-quasirecognizable simple  groups}

A finite nonabelian simple group $L$ is said to be {\it quasirecognizable by spectrum} if every finite group isospectral to $L$ has only one nonabelian composition factor and this factor is isomorphic to $L$. Clearly
quasirecognizability is a necessary condition for being almost recognizable.

Recall that for even $q$, the simple group $S_{2n}(q)$ is equal to $Sp_{2n}(q)$ and the simple group $O^\varepsilon_{2n}(q)$ is $\Omega^\varepsilon_{2n}(q)$, a subgroup of index 2 in $GO_{2n}^\varepsilon(q)$
(see, for example, \cite[p. xii]{85Atlas}). Mazurov and Moghaddamfar \cite{06MazMog} noted that $\omega(Sp_8(2))=\omega(GO_8^-(2))$, and thereby $Sp_8(2)$ is not quasirecognizable by spectrum. We generalize this result to all even $q$.

\begin{prop}\label{l:GO8}
Let $q$ be even. Then $\omega(Sp_8(q))=\omega(GO_8^-(q))$ and in particular $Sp_8(q)$ is not quasirecognizable by spectrum.
\end{prop}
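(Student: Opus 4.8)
The plan is to compute both spectra explicitly from the known descriptions and compare them term by term. For $\omega(Sp_8(q))$ with $q$ even, I would invoke Lemma~\ref{l:spectrum_s_2} with $n=4$: the spectrum consists of divisors of $[q^{n_1}\pm1,\dots,q^{n_s}\pm1]$ with $\sum n_i=4$, of $2[q^{n_1}\pm1,\dots]$ with $\sum n_i=3$, of $2^k[\dots]$ with $2^{k-2}+1+\sum n_i=4$ (so $k=2$, $\sum n_i=2$, or $k=3$, $\sum n_i=1$), and of $2^k$ with $4=2^{k-2}+1$ (impossible). So concretely the maximal elements are divisors of the numbers $q^4-1$, $(q^2+1)(q^2-1)$ (i.e.\ $q^4-1$ again in part), $(q^3\pm1)(q-1)$, $(q^2\pm1)(q\pm1)^2$ and similar, together with $2(q^2\pm1)(q\pm1)$, $2(q^3\pm1)$, $2(q^2-1)(q+1)$ etc.\ from part (ii), $4(q^2-1)$, $4(q\pm1)^2$ and $8(q\pm1)$ from part (iii). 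I would organize these into a clean finite list of ``building-block'' numbers whose divisor-closure is $\omega(Sp_8(q))$.

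Next I would describe $\omega(GO_8^-(q))$. Since $q$ is even, $GO_8^-(q)$ contains $\Omega_8^-(q)=O_8^-(q)$ as a subgroup of index $2$, and $\omega(O_8^-(q))$ is given by Lemma~\ref{l:spectrum_o8m}: divisors of $q^4\pm1$, $(q^2\pm q+1)(q^2-1)$, $2(q^2+1)(q\pm1)$, $4(q^2-1)$, and $8$. It remains to account for the extra elements contributed by the outer coset of $\Omega_8^-(q)$ in $GO_8^-(q)$; these are reflections and products of a reflection with elements of $\Omega_8^-(q)$, and one computes their orders from the structure of centralizers of such an involution (or directly from a description of element orders in the full orthogonal group). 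I expect the net effect is precisely to adjoin the missing numbers $2(q^3\pm1)$, $2(q^2-1)(q+1)$, $8(q\pm1)$ and their relatives, so that the divisor-closure of $\omega(GO_8^-(q))$ coincides with the list found in the first step. Finally, ``$Sp_8(q)$ is not quasirecognizable'' is immediate: $GO_8^-(q)$ has a normal subgroup of order $2$ (or at least a proper normal subgroup whose quotient is not simple) — more precisely it is \emph{not} almost simple with the single nonabelian composition factor being $Sp_8(q)$, yet it is isospectral to $Sp_8(q)$, which violates the definition of quasirecognizability.

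The routine part is the two spectrum computations; the only real content is the \textbf{matching}, i.e.\ verifying that the finite lists of building blocks produced for $Sp_8(q)$ and for $GO_8^-(q)$ generate the same divisor-closed set. The main obstacle I anticipate is bookkeeping: one must be careful about the $2$-parts, since in characteristic $2$ the numbers $q^k-1$ are odd and the powers of $2$ enter only through the ``unipotent times semisimple'' factors ($2$, $4$, $8$), and one must check e.g.\ that $8(q+1)\in\omega(Sp_8(q))$ (from part (iii) with $k=3$, $n_1=1$) really is matched on the orthogonal side, and conversely that no stray large $2$-power like $16$ or $2^k(q\pm1)$ with $k\geqslant4$ creeps into $\omega(GO_8^-(q))$ from the outer coset. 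I would handle this by tabulating, for each divisor-maximal number on each side, a witnessing element (unipotent part, semisimple part, and the coset it lies in) and then checking inclusion in both directions. One subtlety worth isolating as a preliminary remark: the identity $q^4-1=(q^2+1)(q-1)(q+1)$ together with $(q^2\pm q+1)\mid q^3\mp 1$ shows the orthogonal building blocks $(q^2\pm q+1)(q^2-1)$ are divisors of $(q^3\mp1)(q-1)$ and $(q^3\pm1)(q+1)$, which appear among the symplectic blocks; the reverse containments and the behaviour of the doubled blocks is where the coset analysis is genuinely needed.
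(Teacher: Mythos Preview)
Your overall strategy is sound, but you miss a key shortcut and underestimate where the real work lies.

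\textbf{The inclusion $\omega(GO_8^-(q))\subseteq\omega(Sp_8(q))$ is free.} In characteristic~$2$ one has $GO_9(q)\simeq Sp_8(q)$, and $GO_8^-(q)$ embeds in $GO_9(q)$. The paper uses exactly this: one inclusion costs nothing, so there is no need to compute the full spectrum of $GO_8^-(q)$ and then laboriously check that every element order divides something on the symplectic side. In particular your worry that ``no stray large $2$-power like $16$\dots\ creeps into $\omega(GO_8^-(q))$ from the outer coset'' evaporates.

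\textbf{The other inclusion is where the content is, and you have not located the hard step.} Comparing Lemmas~\ref{l:spectrum_s_2} and~\ref{l:spectrum_o8m}, the numbers in $\omega(Sp_8(q))\setminus\omega(\Omega_8^-(q))$ are divisors of $2(q^3\pm1)$, $4(q^2+1)$, and $8(q\pm1)$ (your list has $2(q^2-1)(q+1)$, which is a slip: the lemma gives least common multiples, not products, and the relevant missing block is $4(q^2+1)$). The paper realises each of these inside $GO_8^-(q)$ via the subgroups $GO_{2k}^\varepsilon(q)\times GO_{8-2k}^{-\varepsilon}(q)$. Three of the four are easy; the genuinely nontrivial case is $8(q-1)$, which reduces to showing $8\in\omega(GO_6^-(q))$. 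This is \emph{not} obtainable from centralizers of reflections in any obvious way: a reflection in $GO_6^-(q)$ has centraliser of type $Sp_4(q)$, whose $2$-exponent is only $4$. The paper instead identifies $GO_6^-(q)$ with $SU_4(q)\rtimes\langle\gamma\rangle$ (graph automorphism) and writes down an explicit unipotent matrix $B\in SU_4(q)$ with $(B\gamma)^4\neq 1$. Your plan to ``compute orders in the outer coset from centralizers of an involution'' would not have found this element, since $B\gamma$ has order~$8$ and is not of the form (involution)$\times$(centralising element).

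\textbf{Minor correction on quasirecognizability.} For even $q$ the centre of $GO_8^-(q)$ is trivial, so there is no normal subgroup of order~$2$. The correct observation is simply that the unique nonabelian composition factor of $GO_8^-(q)$ is $\Omega_8^-(q)$, which is not isomorphic to $Sp_8(q)$; that already contradicts quasirecognizability.
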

\begin{proof}
Denote $Sp_8(q)$, $GO_8^-(q)$, and $\Omega_8^-(q)$ by $L$, $G$, and $S$ respectively. Since $G$ is a subgroup in $GO_9(q)\simeq Sp_8(q)$, it follows that $\omega(G)\subseteq\omega (L)$. Thus it suffices to show that $\omega(L)\setminus\omega(S)\subseteq\omega(G)$. It follows from Lemma \ref{l:spectrum_s_2} that $\omega(L)$ consists of all divisors of the~numbers $q^4\pm 1$, $(q^2\pm q+1)(q^2-1)$,  $2(q^3\pm1)$,
$2(q^2+1)(q\pm 1)$, $4(q^2\pm 1)$, and $8(q\pm 1)$. Supplementing this by Lemma \ref{l:spectrum_o8m}, we see that every number in $\omega(L)\setminus\omega (S)$ is a divisor of one of the numbers $2(q^3\pm1)$, $4(q^2+1)$, and $8(q\pm 1)$.

It is well known that $GO_{2n}^-(q)$ has a subgroup of the form $GO_{2k}^\varepsilon(q)\times GO_{2n-2k}^{-\varepsilon}(q)$ for every $1\leqslant k\leqslant n-1$ and $\varepsilon\in\{+,-\}$. 
Thus $G$ contains  $GO_6^+(q)\times GO_2^-(q)$, $GO_4^+(q)\times GO_4^-(q)$, and  $GO_2^+(q)\times GO_6^-(q)$.

It can be verified by means of \cite{GAP} that $4\in\omega(GO_4^+(2))$. Since $GO_4^+(2)\leqslant GO_4^+(q)$ and $SL_2(q^2)<GO_4^-(q)$,  it follows that $4(q^2+1)\in\omega(G)$.

Since $q^3-\varepsilon \in\omega(GO_6^\varepsilon(q))$ and $GO_2^\varepsilon(q)$ is  dihedral of order $2(q-\varepsilon)$, we conclude that $2(q^3\pm 1)\in\omega(G)$.
Furthermore, $S_8\simeq GO_6^+(2)\leqslant GO_6^+(q)$, and hence $8(q+1)\in\omega(G)$. It remains to establish that
$GO_6^-(q)$ has an element of order 8.

The group $GO_6^-(q)$ is isomorphic to a split extension of $SU_4(q)$ by a graph automorphism of order 2. We identify $SU_4(q)$ with
$$H=\{A\in SL_4(q^2)\mid AJ\overline A^\top=J\},$$
where $\overline{(a_{ij})}=(a_{ij}^q)$ and $J$ is the $4\times 4$ matrix with 1's on the antidiagonal and 0's elsewhere. Then $\gamma$ defined by $A^\gamma=\overline A$
is a graph automorphism of $H$. Choose $t\in GF(q^2)$ such that $t^q\neq t$ and let
$$B=
\begin{pmatrix}
1& t & 0& 0\\
0& 1 & 1 & 0 \\
0& 0 & 1 & t^q \\
0& 0 & 0 & 1 \\
\end{pmatrix}.$$
It is easy to verify that $B\in H$ and
$$(B\gamma)^4=
\begin{pmatrix}
1& 0 & 0& t^2+t^{2q}\\
0& 1 & 0 & 0 \\
0& 0 & 1 & 0 \\
0& 0 & 0 & 1 \\
\end{pmatrix}.$$
Since $t^2+t^{2q}\neq 0$, the order of $B\gamma$ is equal to 8, and hence $8\in\omega(GO_6^-(q))$.
\end{proof}

It is worth noting that another example of a finite group isospectral to $S_8(q)$ with $q$ even and having $O_8^-(q)$ as a composition factor
was constructed in \cite{14GrSt}.

{\bf Acknowledgments.} We are grateful to the referee for his careful reading of the article and helpful comments and suggestions.

\end{document}